\newtheorem{theorem}{Theorem}[section]
\newtheorem{lemma}{Lemma}[section]
\newtheorem{proposition}{Proposition}[section]
\newtheorem{corollary}{Corollary}[section]
\numberwithin{equation}{section}
\begin{document}
\title[Projections of self similar measures]{Exact dimensionality and projections of random self-similar measures and sets}

\author{Kenneth Falconer}
\address{Mathematical Institute, University of St Andrews, North Haugh, St Andrews, Fife, KY16 9SS, Scotland}
\email{kjf@st-andrews.ac.uk}

\author{Xiong Jin}
\address{School of Mathematics, University of Manchester, Oxford Road, Manchester M13 9PL, United Kingdom}
\email{xiong.jin@manchester.ac.uk}
\thanks{The second author held a Royal Society Newton International Fellowship whilst this work was carried out.}

\begin{abstract}
We study the geometric properties of random multiplicative cascade measures defined on self-similar sets. We show that such measures and their projections and sections are almost surely exact-dimensional, generalizing Feng and Hu's result \cite{FeHu09} for self-similar measures. This, together with a compact group extension argument, enables us to generalize Hochman and Shmerkin's theorems on projections of deterministic self-similar measures  \cite{HoSh12} to these random measures without requiring any separation conditions on the underlying sets. We give applications to self-similar sets and fractal percolation, including new results on projections,  $C^1$-images and distance sets.
\end{abstract}

\maketitle
\section{Introduction}

Relating the Hausdorff dimension of a set $K \subseteq \mathbb{R}^d$ to the dimensions of its projections and sections has a long history. The most basic result, due to Marstrand \cite{Mar54} in the plane and to  Mattila \cite{Mat75} more generally, is that if $K\subseteq \mathbb{R}^d$ is Borel or analytic, then, writing $ \Pi_{d,k}$ for the  family of orthogonal projections from $\mathbb{R}^d$ onto its $k$-dimensional subspaces, 
\begin{equation}\label{a1}
\dim_H \pi K=\min(k,\dim_H K)
\end{equation}
for almost all  $\pi\in \Pi_{d,k}$ with respect to the natural invariant measure on $\Pi_{d,k}$, where $\dim_H $ denotes Hausdorff dimension. 
These papers also discuss the dimensions of sections or slices of sets and show that for almost all  $\pi\in \Pi_{d,k}$, if $ \dim_H K > k $, the sections $\pi^{-1}x \cap K$ satisfy
\begin{equation}\label{a2}
\dim_H (\pi^{-1}x \cap K)\leq \dim_H K - k 
\end{equation}
 for Lebesgue almost all $x \in \pi(K)$, with equality for a set of $x$ of positive Lebesgue measure; 
see \cite{Mat95}  for a good exposition of this material.

 The Hausdorff dimension of a probability measure $\mu$ is defined as 
 \begin{equation}\label{dimmes}
\dim_H \mu = \inf \{\dim_H K:\mu(K) >0\}. 
\end{equation}
The dimension properties of projections and sections of measures directly parallel those for sets; indeed the conclusions for sets generally follow from the measure analogues.

These classical results have been extended beyond recognition, for example to families of generalized projections \cite{PerSch00}, to obtain estimates on the size of `exceptional' projections $\pi$ for which the conclusions  (\ref{a1}) or (\ref{a2}) fail \cite{PerSch00}, and to packing dimensions  \cite{FalHow97}. Almost all of this work concerns sections and projections of general Borel or analytic sets $K$ for which the possibility of exceptional projections can never be excluded. Nevertheless, it has recently been noted that for specific classes of sets and measures the dimensions of projections or sections may be constant for all $\pi$, or at least  it may be possible to identify the exceptional $\pi$. In particular, highly innovative approaches of Hochman and Shmerkin \cite{HoSh12} and Furstenberg \cite{Fur} have addressed this for self-similar sets and measures, and our aim here is to generalise their results to a random setting. 

A family of contractions $\mathcal{I}=\{f_i\}_{i=1}^{m}$ on $\mathbb{R}^d$, referred to in this context as an {\it iterated function system} (IFS),  defines a unique non-empty compact set $K$ such that 
\begin{equation}\label{IFSatt}
K = \bigcup_{i=1}^m f_i(K);
\end{equation}
$K$ is termed the {\it attractor} of the IFS, see, for example, \cite{Falconer03}. Here we consider an IFS of contracting similarities
\begin{equation}\label{IFS}
\mathcal{I}=\{f_i=r_iO_i\cdot+t_i\}_{i=1}^{m},
\end{equation}
where each  $f_i$ is a composition of a scaling of ratio $r_i<1$, an orthonormal rotation $O_i$ and a translation $t_i$; we call such an attractor $K$ a {\it self-similar set}. Our conclusions will depend very much on the nature of  the {\it rotation group} $G$ of the IFS, that is the closure of the subgroup of $SO(d,\mathbb{R})$ generated by the $O_i$.

In this paper we obtain almost sure properties of projections and sections of {\it random multiplicative cascade measures} on self-similar sets.  The precise definition is given in Section 2.3 but for the purposes of this introduction such a measure will be denoted by $\widetilde{\mu}$ and be supported by a self-similar set $K$.  In particular,  $\widetilde{\mu}$ is {\it statistically self-similar}, that is, roughly speaking, the restriction  of $\widetilde{\mu}$ to each small scale component of $K$ has, after scaling, the same random distribution as  $\widetilde{\mu}$ itself.  Our motivation for considering such measures is that they are the natural random generalisations of self-similar measures but also are the natural tools for studying fractal percolation processes. Moreover, random cascade measures provide the classical models for multiplicative chaos theory, an area that has recently attracted attention because of  its connection to quantum gravity, see \cite{RV} for a recent survey. 

In Section 2 we give a precise construction of the probability space underlying the random cascade measures, and thus obtain an ergodic random dynamical system on the space of random cascade measures. An application of the compact group extension theorem shows that the skew product of this random dynamical system with the rotation group $G$ is also ergodic.

These ergodicities are used in Section 3 to show that almost surely a random multiplicative cascade measure $\widetilde{\mu}$, as well as almost all of its projections and sections (with respect to the Haar measure on $G$) are exact dimensional, that is the local dimensions exist and are constant almost everywhere. The proofs, which reformulate the measures of small balls as a type of Birkhoff sum, are adapted from the ergodic theoretic approach introduced for the deterministic case in \cite{FeHu09}. This sum converges to the  conditional entropy with respect to a sub-$\sigma$-algebra that captures the overlapping structure of self-similar sets, giving exact-dimensionality without any seperation condition (i.e. without requiring the union in (\ref{IFSatt}) to be disjoint), as well as a  formula for the exact dimension in terms of the conditional entropy.

One consequence of this is an almost sure `dimension conservation' property, relating the dimensions of the projections to those of perpendicular sections. Writing $\pi \widetilde{\mu}$ for the measure on $ \pi(K)$ obtained by projecting $\widetilde{\mu}$ under $\pi$, and $\widetilde{\mu}_{y,\pi}$ for the section of  $\widetilde{\mu}$ by the $(d-k)$-dimensional  plane $\pi^{-1}y$, we get the following conclusions when the rotation group is finite.

\begin{corollary}\label{t1}
Suppose  that $G$ is finite. Then for every projection  $\pi \in \Pi_{d,k}$,
\begin{equation}
\dim_H \pi \widetilde{\mu}+\dim_H \widetilde{\mu}_{y,\pi}=\dim_H \widetilde{\mu} 
\quad \mbox{ for $\pi \widetilde{\mu}$-almost all $y \in \pi(K)$}
\label{dimcorint}
\end{equation}  
almost surely. In particular, if $\widetilde{\mu}$ is deterministic then (\ref{dimcorint}) holds  for all $\pi$.
\end{corollary}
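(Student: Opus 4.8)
The plan is to deduce the corollary directly from the exact‑dimensionality results and the conditional‑entropy formulas established in Section~3, the only extra ingredient being that finiteness of $G$ converts a ``Haar‑almost every rotation'' conclusion into an ``every rotation'' conclusion. First I would fix an arbitrary $\pi\in\Pi_{d,k}$. Applying the results of Section~3 to the ergodic skew product of the random dynamical system with the rotation group $G$ (whose ergodicity comes from the compact group extension theorem), there is, almost surely, a subset of full Haar measure in $G$ consisting of rotations $g$ for which $\widetilde{\mu}$, the projection $(\pi g)\widetilde{\mu}$, and the sections $\widetilde{\mu}_{y,\pi g}$ are all exact‑dimensional, with
\[
\dim_H\widetilde{\mu}=\frac{H}{\chi},\qquad \dim_H (\pi g)\widetilde{\mu}=\frac{H_{\pi g}}{\chi},\qquad \dim_H \widetilde{\mu}_{y,\pi g}=\frac{H-H_{\pi g}}{\chi}\quad\text{for }(\pi g)\widetilde{\mu}\text{-a.e. }y.
\]
Here $\chi>0$ is the Lyapunov exponent of the cascade, which also governs the scale at which the projected cylinders are covered, so it is common to all three systems, while $H$ and $H_{\pi g}$ are the conditional entropies with respect to the sub‑$\sigma$‑algebras $\mathcal{G}\subseteq\mathcal{G}_{\pi g}$ that capture, respectively, the coincidence (overlapping) structure of $K$ and of its projection $\pi g(K)$. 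The inclusion $\mathcal{G}\subseteq\mathcal{G}_{\pi g}$ holds because any two symbolic sequences coding the same point of $K$ \emph{a fortiori} code the same point of $\pi g(K)$, so the coincidence $\sigma$‑algebra downstairs is at least as fine; this is precisely the mechanism behind $0\le\dim_H(\pi g)\widetilde{\mu}\le\dim_H\widetilde{\mu}$.

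With these formulas in hand the dimension‑conservation identity is just additivity: for $(\pi g)\widetilde{\mu}$‑almost every $y$,
\[
\dim_H (\pi g)\widetilde{\mu}+\dim_H \widetilde{\mu}_{y,\pi g}=\frac{H_{\pi g}}{\chi}+\frac{H-H_{\pi g}}{\chi}=\frac{H}{\chi}=\dim_H\widetilde{\mu},
\]
which is nothing but the chain rule for conditional entropy with respect to $\mathcal{G}\subseteq\mathcal{G}_{\pi g}$, read through the factor $\chi$, together with the identification (from Section~3) of $(H-H_{\pi g})/\chi$ with the typical section dimension. Now I invoke the hypothesis: since $G$ is finite, its Haar measure is the uniform probability measure on $G$, so ``full Haar measure in $G$'' means ``all of $G$''; in particular we may take $g$ to be the identity, and the displayed identity holds for $\pi$ itself, almost surely. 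As $\pi\in\Pi_{d,k}$ was arbitrary, this gives the first assertion. If moreover $\widetilde{\mu}$ is deterministic, the underlying probability space is a single point, so ``almost surely'' is vacuous and \eqref{dimcorint} holds for every $\pi$, recovering dimension conservation for self‑similar measures with finite rotation group and no separation condition.

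I expect the real work to sit entirely inside the results of Section~3 that this argument quotes, not in the assembly above: namely, checking that the coincidence $\sigma$‑algebra $\mathcal{G}_{\pi g}$ of the projected cascade is well defined and $\sigma(\{O_i\})$‑equivariant, that the Birkhoff‑sum reformulation of $-\log (\pi g)\widetilde{\mu}\big(B(y,\rho)\big)$ converges almost everywhere to the conditional entropy $H_{\pi g}$, and — most delicately — that the ``missing'' entropy $H-H_{\pi g}$ is actually \emph{attained} by the sections for $(\pi g)\widetilde{\mu}$‑almost every $y$, rather than merely providing an upper bound for their dimension. Granting these (as the statement permits), the sole role of the finiteness of $G$ is to promote the Haar‑almost‑everywhere statement to one valid for \emph{every} $\pi$, which is exactly what separates this corollary from the infinite‑$G$ case handled later through the Hochman–Shmerkin inverse‑theorem machinery.
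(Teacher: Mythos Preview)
Your proposal is correct and follows essentially the same route as the paper: invoke the exact-dimensionality formulas of Theorem~\ref{thmed} (your $H/\chi$, $H_{\pi g}/\chi$, $(H-H_{\pi g})/\chi$ are the paper's $\alpha$, $\beta(\pi)$, $\gamma(\pi)$), observe that they sum to give dimension conservation for $\xi$-a.e.\ $g$ (Corollary~\ref{dim-con}), and then use finiteness of $G$ to upgrade this to every $g$, in particular $g=\mathrm{id}$ (Corollary~\ref{dim-con2}). One cosmetic slip: the inclusion of $\sigma$-algebras goes the other way, $\mathcal{B}_{\pi g\Phi}\subseteq\mathcal{B}_\Phi$, since fibres of $\pi g\Phi$ are unions of fibres of $\Phi$; this does not affect the additivity argument.
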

\begin{proof} See Corollaries \ref{dim-con} and \ref{dim-con2}.
\end{proof}

Note that  the deterministic case extends the result of Furstenberg \cite{Fur} by dispensing with the separation requirement that the union in (\ref{IFSatt}) is disjoint.

In Section 4 we show that if $G=SO(d,\mathbb{R})$ then almost surely all projections of  $\widetilde{\mu}$ and, indeed, all images  of  $\widetilde{\mu}$ under non-singular $C^1$-maps, have dimension equal to  the  `generic' value. The deterministic results that were proved using CP-processes in  \cite{HoSh12} follow as a special case. Here  we adopt a new approach  utilising the skew product dynamical system, leading to results such as the following.

\begin{corollary}\label{t2}
If  $G=SO(d,\mathbb{R})$ then almost surely, conditional on non-extinction of the random measure $\widetilde{\mu}$,
\begin{equation}
\dim_H \pi\widetilde{\mu} = \min(k, \dim_H\widetilde{\mu})  \quad  \text{ for all } \pi\in \Pi_{d,k}.\label{ineqbetathm1}
\end{equation}
More generally, for all $C^1$-maps $h:K\mapsto \mathbb{R}^k$ without singular points,
\begin{equation}
\dim_H h\widetilde{\mu}=\min(k,\dim_H \widetilde{\mu}).\label{eqnbthm1}
\end{equation}

\end{corollary}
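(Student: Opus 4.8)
The plan is to treat the two statements in turn, deducing \eqref{eqnbthm1} from \eqref{ineqbetathm1}. The upper bounds are soft: every $\pi\in\Pi_{d,k}$ is Lipschitz with image in a $k$-plane, so $\dim_H\pi\widetilde\mu\le\min(k,\dim_H\widetilde\mu)$, and a $C^1$-map is Lipschitz on the compact set $K$, so $\dim_H h\widetilde\mu\le\min(k,\dim_H\widetilde\mu)$ as well; thus only the lower bounds need work. For the $C^1$-case I would reduce to the linear case by local linearisation: since $h$ has no singular points, on each small cylinder $f_{\mathbf i}(K)$ it agrees to first order with an affine surjection onto $\mathbb R^k$, and by the statistical self-similarity of $\widetilde\mu$ the restriction of $h\widetilde\mu$ to $h(f_{\mathbf i}(K))$ is comparable, with distortion tending to $1$ as $|\mathbf i|\to\infty$, to a linear image $\pi_V\widetilde\mu^{(\mathbf i)}$ of a rescaled independent copy. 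Feeding this into the exact-dimensionality of $h\widetilde\mu$ (which follows from the methods of Section~3) transfers the lower bound $\min(k,\dim_H\widetilde\mu)$ from \eqref{ineqbetathm1}. So the heart of the matter is: almost surely, conditional on non-extinction, $\dim_H\pi_V\widetilde\mu\ge\min(k,\dim_H\widetilde\mu)$ for \emph{every} $V$.

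I would first obtain this for almost every $V$. By Section~3, $\widetilde\mu$ is a.s.\ exact-dimensional with $\dim_H\widetilde\mu$ equal (conditional on non-extinction) to a deterministic constant $s$, and for Haar-a.e.\ $g\in G$ the projection $\pi_{gV_0}\widetilde\mu$ is a.s.\ exact-dimensional; moreover, by the reformulation of the mass of a small ball in $\pi_{gV_0}(K)$ as a Birkhoff-type sum along the cascade, this exact dimension is realised as an ergodic average of an explicit function $\psi$ on the skew-product space $\Omega\times G$. Because $G=SO(d,\mathbb R)$, the compact group extension argument of Section~2 makes this skew product ergodic, so the ergodic theorem gives that for a.e.\ starting point the average equals the constant $\int\psi$; and comparing with the classical Marstrand--Mattila projection theorem \eqref{a1} applied to the exact-dimensional measure $\widetilde\mu_\omega$ for a fixed generic $\omega$ identifies this constant as $\min(k,s)$. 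Hence $\dim_H\pi_V\widetilde\mu=\min(k,s)$ for Haar-a.e.\ $V$, almost surely.

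The main obstacle is upgrading ``a.e.\ $V$'' to ``every $V$''. Here I would exploit self-similarity in the direction variable: for every $V$ and every $n$, decomposing $\widetilde\mu$ over level-$n$ cylinders gives
\[ \pi_V\widetilde\mu \;=\; \sum_{|\mathbf i|=n} W_{\mathbf i}\,(\pi_V f_{\mathbf i})_*\widetilde\mu^{(\mathbf i)},\qquad (\pi_V f_{\mathbf i})_*\widetilde\mu^{(\mathbf i)}\cong r_{\mathbf i}\cdot\pi_{O_{\mathbf i}^{-1}V}\widetilde\mu^{(\mathbf i)}, \]
where $\cong$ denotes equality up to an invertible similarity of the $k$-plane and the $\widetilde\mu^{(\mathbf i)}$ are conditionally i.i.d.\ copies of $\widetilde\mu$; hence $\dim_H\pi_V\widetilde\mu=\min_{|\mathbf i|=n}\dim_H\pi_{O_{\mathbf i}^{-1}V}\widetilde\mu^{(\mathbf i)}$. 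Since $G=SO(d,\mathbb R)$, the subgroup generated by the $O_i$ is dense and the rescaled directions $O_{\mathbf i}^{-1}V$ visited along the cascade perform a random walk on $\Pi_{d,k}$ whose unique stationary measure is Haar, independent of the descendant copies; combining this equidistribution with the ergodic lower bound of the previous paragraph — packaged as lower semicontinuity in $V$ of the relevant $\liminf$ of projected local dimensions, together with a reduction to a countable dense set of base directions — forces $\dim_H\pi_V\widetilde\mu\ge\min(k,s)$ simultaneously for all $V$, almost surely. I expect the genuinely delicate step to be making this equidistribution-plus-semicontinuity argument uniform over all $V$ and valid along the cascade tree rather than merely in distribution; this is precisely where the hypothesis $G=SO(d,\mathbb R)$, as opposed to a general rotation group, is indispensable.
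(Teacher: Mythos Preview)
Your upper bounds and the reduction of the $C^1$ case to the linear case are fine in spirit, and your identification of $\beta(\pi_0)=\min(k,\dim_H\widetilde\mu)$ via Marstrand--Mattila for a.e.\ direction is exactly what the paper does in Corollary~\ref{c2}. The gap is in your upgrade from ``a.e.\ $V$'' to ``every $V$''.

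You write the decomposition $\dim_H\pi_V\widetilde\mu=\min_{|\mathbf i|=n}\dim_H\pi_{O_{\mathbf i}^{-1}V}\widetilde\mu^{(\mathbf i)}$ and then appeal to equidistribution of the directions $O_{\mathbf i}^{-1}V$ together with ``lower semicontinuity in $V$ of the relevant $\liminf$ of projected local dimensions''. But $V\mapsto\dim_H\pi_V\nu$ is \emph{not} lower semicontinuous for general (even exact-dimensional) measures $\nu$, and you have not exhibited any surrogate quantity that is. Equidistribution alone is useless here: for each fixed $V$ and $n$ you need $O_{\mathbf i}^{-1}V$ to avoid the $\omega$-dependent null set of bad directions for $\widetilde\mu^{(\mathbf i)}$, and no amount of density or equidistribution forces a specific point into a specific full-measure set. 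The ``reduction to a countable dense set of base directions'' does not help without a genuine semicontinuity statement.

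The paper resolves this by a different mechanism. It does not upgrade the dimension result directly; instead it works with the averaged \emph{fixed-scale entropy} $E_q(\pi)=\mathbb E_{\mathbb P^*\times\xi}\big(e_q(\pi,g\Phi\bar\mu)\big)$, which \emph{is} lower semicontinuous in $\pi$ because $H_r$ is lower semicontinuous on $\mathcal M$ (Section~\ref{secde}). Theorem~\ref{eq} then gives, via the $\rho$-tree and local entropy average machinery of Hochman--Shmerkin (their Theorems~4.4, 5.4 and Propositions~5.2, 5.3), the bound $\dim_H(\pi g\Phi\bar\mu)\ge E_q(\pi)-O(1/q)$ \emph{uniformly over all} $\pi$, almost surely. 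Letting $q\to\infty$ and using lower semicontinuity of $E=\lim E_q$ (Theorem~\ref{e}) yields Corollary~\ref{c1}, and Corollary~\ref{c2} follows. The $C^1$ case is handled analogously by Proposition~\ref{pro}. The point is that the object which is simultaneously (i) lower semicontinuous in $\pi$ and (ii) a lower bound for $\dim_H\pi g\Phi\bar\mu$ for every $\pi$ is the entropy function $E_q$, and linking entropy averages along the cascade to Hausdorff dimension requires the Hochman--Shmerkin tree theorem, which your proposal never invokes.
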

\begin{proof} See Theorem \ref{e} and Corollary \ref{c2}.
\end{proof}

We specialise these results to deterministic self-similar sets in Section 5, and in particular show that conclusions relating to the dimensions of all projections and dimension conservation are valid without any separation condition on the self-similar construction, extending work of Hochman and Shmerkin \cite{HoSh12} and Furstenberg \cite{Fur}. Again there are consequences for the dimensions of images of sets under $C^1$-mappings and also for the dimensions of distances sets.

Recently there has been considerable interest in geometric properties of percolation on self-similar sets, that is random subsets $K_\mathbb{P}$ of $K$ obtained by removing components of the iterated construction of $K$ according to a  self-similar probability distribution $\mathbb{P}$. Associating the natural measures on   $K_\mathbb{P}$ with random cascade measures, we obtain in Section 6 new almost sure properties of projections and dimension conservation for  these random sets. 

\section{Preliminaries}

\subsection{Symbolic space}

Symbolic or code space underlies the structure of self-similar sets.

Let $\Lambda=\{1,\cdots,m\}$ be the alphabet on $m\ge 2$ symbols. Denote by $\Lambda^*=\cup_{n\ge 0} \Lambda^n$ the set of finite words, with the convention that $\Lambda^0=\{\emptyset\}$. Let $\Lambda^\mathbb{N}$ be the symbolic space of infinite sequences from the alphabet. For $\underline{i}\in \Lambda^\mathbb{N}$ and $n\ge 0$ let $\underline{i}|_n\in \Lambda^n$ be the first $n$ digits of $\underline{i}$. For $i\in \Lambda^n$ let $[i]=\{\underline{i}\in \Lambda: \underline{i}|_n=i\}$ be the {\it cylinder} rooted at $i$. We may endow $\Lambda^\mathbb{N}$ with the standard metric $d_\rho$ with respect to a number $\rho\in(0,1)$, that is for $\underline{i},\underline{j}\in \Lambda^\mathbb{N}$, $d_\rho(\underline{i},\underline{j})=\rho^{\inf\{n\ge 0: \underline{i}|_n\neq \underline{j}|_n\}}$. Then $(\Lambda^\mathbb{N},d_\rho)$ is a compact metric space. Let $\mathcal{B}$ be its Borel $\sigma$-algebra. Define the left-shift map $\sigma$ by  $\sigma(\underline{i})=(i_{n+1})_{n\ge 1}$ for $\underline{i}=(i_n)_{n\ge 1}\in \Lambda^\mathbb{N}$.

\subsection{Self-similar sets}

Let $\mathcal{I}$ be an IFS as in \eqref{IFS} with non-empty compact attractor $K\subseteq \mathbb{R}^d$ satisfying (\ref{IFSatt}).
For $i=i_1\cdots i_n\in \Lambda^n$ write
\[
f_i= f_{i_1}\circ \cdots \circ f_{i_n}=r_iO_i\cdot+t_i,
\]
where $r_i =  r_{i_1}\cdots r_{i_n}$, $O_i =  O_{i_1}\cdots O_{i_n}$ and $t_i$ is the appropriate translation.
Throughout the paper, $G=\overline{\langle O_i:i\in \Lambda\rangle}$ will denote the {\it rotation group} of the IFS, that is the compact subgroup of $SO(d,\mathbb{R})$ generated by the orthonormal maps $\{O_i,  i\in\Lambda\}$.

Let $\Phi: \Lambda^\mathbb{N} \mapsto K$ be the canonical projection, that is $\Phi(\underline{i})=\lim_{n\to\infty} f_{\underline{i}|_n}(x_0)$ for some $x_0\in K$. Let $R=\max\{|x|:x\in K\}$ and $\rho=\max\{r_i:i\in\Lambda\}$. Then it is easy to see that $\Phi: (\Lambda^\mathbb{N},d_\rho) \mapsto K$ is $R$-Lipschitz.

\subsection{Random multiplicative cascades}

A random multiplicative cascade is essentially a measure on $ \Lambda^\mathbb{N} $ constructed in a self-similar manner on the successive $ \Lambda^n$, see \cite{KaPe76,Ba99}.
Let $(\Omega,\mathcal{F},\mathbb{P})$ be a probability space. Let
\[
W=(W_i)_{i\in\Lambda}\in [0,\infty)^m
\]
be a random vector defined on $(\Omega,\mathcal{F},\mathbb{P})$ with $\sum_{i\in\Lambda}\mathbb{E}(W_i)=1$.
Let $\{W^{[i]}:i\in \Lambda^*\}$ be a sequence of independent and identically distributed random vectors having the same law as $W$. For $i\in\Lambda^*$, $n\ge 1$ and $j=j_1\cdots j_n\in\Lambda^n$ define
\[
Q^{[i]}_j=W_{j_1}^{[i]}W_{j_2}^{[ij_1]}\cdots W_{j_n}^{[ij_1\cdots j_{n-1}]},
\]
and for $i\in\Lambda^*$ and $n\ge 1$ define $Y_n^{[i]}=\sum_{j\in \Lambda^n} Q^{[i]}_j$. By definition $\{Y_n^{[i]}\}_{n\ge 1}$ is a non-negative martingale. Assume that
\begin{align}
{\bf(a0)} &\quad \mathbb{P}\left(\#\{i\in\Lambda: W_i>0\}>1\right)>0;\nonumber\\
{\bf(a1)} & \quad\mbox{There exists $p>1$ such that } \textstyle\sum_{i=1}^{m}\mathbb{E}\left(W_i^p\right)<1.
\label{assumptions}\end{align}
Then $Y_n^{[i]}$ converges a.s. to a nontrivial limit which we denote by $Y^{[i]}$, with expectation $\mathbb{E}(Y^{[i]}) = 1$. It is easy to see that $Y^{[i]}$, $i\in \Lambda^*$ have the same law as $Y=Y^{[\emptyset]}$. Moreover, for $p>1$ we have $\mathbb{E}(Y^p)<\infty$ if and only if  $\sum_{i=1}^{m}\mathbb{E}\left(W_i^p\right)<1$ (see \cite{DuLi83,KaPe76}). Since $\Lambda^*$ is countable, $Y^{[i]}$ is well-defined  for all $i\in\Lambda^*$ simultaneously. Moreover, by construction,
\begin{equation}\label{yi}
Y^{[i]}=\sum_{j=1}^{m} W_j^{[i]}Y^{[ij]}.
\end{equation}
Then for each $i\in\Lambda^*$ we may define a random measure $\mu^{[i]}$ on $\Lambda^\mathbb{N}$  by
\begin{equation}\label{mu}
\mu^{[i]}([j])=Q^{[i]}_j \cdot Y^{[ij]}, \quad \ j\in\Lambda^*.
\end{equation}
The measure $\mu^{[i]}$ is called the {\it random multiplicative cascade measure} generated by the sequence $\{W^{[ij]}:j\in \Lambda^*\}$. By definition the sequence $\{\mu^{[i]}: i\in\Lambda^*\}$ has the same law. Moreover, by  \eqref{yi} we have statistical self-similarity in the sense that for $i\in\Lambda^*$ and $j\in\Lambda^n$,
\begin{equation}\label{ss}
\mu^{[i]}\big|_{[j]}=Q^{[i]}_j\cdot \mu^{[ij]}\circ \sigma^{-n}\big|_{[j]}.
\end{equation}
Sometimes we will write $(\cdot)=(\cdot)^{[\emptyset]}$, in particular $Q_j=Q_i^{[\emptyset]}$ and $\mu=\mu^{[\emptyset]}$. Our main interest will be in random cascade measures on the self-similar set $K$ given by the canonical projection $\Phi \mu$ of  $\mu$ onto $K$. For more on random cascade measures, see \cite{BaJi10} and the references therein.

\subsection{The underlying probability space}

We now give a precise definition of the probability space on which the i.i.d. sequence $\{W^{[i]}:i\in\Lambda^*\}$ is defined. First recall that the random vector $W$ is defined on the probability space $(\Omega,\mathcal{F},\mathbb{P})$. We will work on the countable product space
\[
(\Omega^*,\mathcal{F}^*,\mathbb{P}^*)={\textstyle \bigotimes}_{i\in\Lambda^*}(\Omega_i,\mathcal{F}_i,\mathbb{P}_i),
\]
where $(\Omega_i,\mathcal{F}_i,\mathbb{P}_i)=(\Omega,\mathcal{F},\mathbb{P})$ for each $i\in\Lambda^*$. For $i\in\Lambda^*$ define the projection
\[
\pi_i:\Omega^{*}\mapsto \Omega_i.
\]
Then by letting $W^{[i]}=W\circ \pi_i$ for $i\in\Lambda^*$ we obtain a family of i.i.d. random vectors on $(\Omega^*,\mathcal{F}^*,\mathbb{P}^*)$. For $i\in\Lambda^*$ let $\mu^{[i]}\equiv\mu^{[i]}(\cdot,\omega)$ be the random cascade measure generated by the sequence $\{W^{[ij]}:j\in \Lambda^*\}$, as in \eqref{mu}. For $i\in\Lambda^*$ define
\[
\eta_i:\Omega^{*} \ni (\omega_j)_{j\in \Lambda^*} \mapsto (\omega_{ij})_{j\in\Lambda^*}\in \Omega^*.
\]
By definition $W^{[ij]}=W^{[i]}\circ \eta_j$ for all $i,j\in \Lambda^*$, thus
\begin{equation}\label{muij}
\mu^{[ij]}(\cdot,\omega)=\mu^{[i]}(\cdot,\eta_j\omega).
\end{equation}
Consequently, from \eqref{ss}, for any $B\in \mathcal{B}$,
\begin{eqnarray*}
\mu^{[i]}(B\cap [j],\omega)&=&Q^{[i]}_j(\omega)\cdot \mu^{[ij]}(\sigma^{-n}(B\cap[j]),\omega)\\
&=&Q^{[i]}_j(\omega)\cdot \mu^{[i]}(\sigma^{-n}(B\cap[j]),\eta_j\omega).
\end{eqnarray*}

\subsection{The Peyri\`ere measure}

Let $(\Omega',\mathcal{F}')=( \Lambda^\mathbb{N}\times \Omega^*, \mathcal{B}\otimes \mathcal{F}^*)$.  Let $\mathbb{Q}$ be the {\it Peyri\`ere measure} on $(\Omega',\mathcal{F}')$ with respect to $\mu=\mu^{[\emptyset]}$, that is for all $A\in\mathcal{F}'$,
\begin{equation}
\mathbb{Q}(A)=\int_{\Omega^*}\int_{\Lambda^\mathbb{N}} \chi_A(\underline{i},\omega) \, \mu(\mathrm{d}\underline{i},\omega)\, \mathbb{P}^*(\mathrm{d}\omega).\label{peydef}
\end{equation}
It is easy to see that $(\Omega',\mathcal{F}',\mathbb{Q})$ is a probability space. Notice that the inside integral is only defined when $\mu$ is not trivial. Write  $\mathbb{P}_*(A)=\mathbb{P}^*(A\cap \{\|\mu\|> 0\})/\mathbb{P}^*(\{\|\mu\|>0\})$ for $A\in\mathcal{F}^*$ for the probability conditional on $\mu$ being non-trivial. Thus ``for $\mathbb{Q}$-a.e. $(\underline{i},\omega)$" is equivalent to ``for $\mathbb{P}_*$-almost all $\mu$, and $\mu$-a.e. $\underline{i}$". Define the skew product
\[
T:\Omega'\ni (\underline{i},\omega)\mapsto (\sigma\underline{i},\eta_{\underline{i}|_1}(\omega)) \in \Omega'.
\]

\begin{lemma}\label{Ti}
The Peyri\`ere measure $\mathbb{Q}$ is $T$-invariant.
\end{lemma}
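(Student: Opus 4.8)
The plan is to verify $T$-invariance on a generating $\pi$-system and then invoke the Dynkin $\pi$--$\lambda$ theorem. I would take the collection of rectangles $[j]\times E$ with $j\in\Lambda^*$ and $E\in\mathcal{F}^*$, together with the empty set: since $[j]\cap[j']$ is again a cylinder or empty, this is a $\pi$-system, and because the cylinders generate $\mathcal{B}$ it generates $\mathcal{F}'=\mathcal{B}\otimes\mathcal{F}^*$. The map $T$ is measurable ($\sigma$ is continuous, each $\omega\mapsto\eta_k\omega$ is a coordinate projection, and $\{\underline i:\underline i|_1=k\}=[k]$ is a cylinder), so $\mathbb{Q}\circ T^{-1}$ is a probability measure on $\mathcal{F}'$, and it suffices to show $\mathbb{Q}\bigl(T^{-1}([j]\times E)\bigr)=\mathbb{Q}([j]\times E)$ for all such $j,E$.

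First I would unwind $T^{-1}$. Since $T(\underline i,\omega)=(\sigma\underline i,\eta_{\underline i|_1}\omega)$, splitting according to the first symbol $i_1=k$ gives the disjoint decomposition $T^{-1}([j]\times E)=\bigcup_{k\in\Lambda}\bigl([kj]\times\eta_k^{-1}E\bigr)$. By \eqref{peydef}, $\mathbb{Q}\bigl([kj]\times\eta_k^{-1}E\bigr)=\int_{\Omega^*}\mathbf 1_E(\eta_k\omega)\,\mu([kj],\omega)\,\mathbb{P}^*(\mathrm d\omega)$. Next I would use the multiplicativity $Q_{kj}=W_k\,Q^{[k]}_j$, which by \eqref{mu} gives $\mu([kj],\omega)=W_k(\omega)\,\mu^{[k]}([j],\omega)$, and then \eqref{muij} (with $i=\emptyset$), which gives $\mu^{[k]}(\cdot,\omega)=\mu(\cdot,\eta_k\omega)$, to rewrite this as
\begin{equation*}
\mathbb{Q}\bigl([kj]\times\eta_k^{-1}E\bigr)=\int_{\Omega^*}W_k(\omega)\,\mathbf 1_E(\eta_k\omega)\,\mu\bigl([j],\eta_k\omega\bigr)\,\mathbb{P}^*(\mathrm d\omega).
\end{equation*}

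The key point is then the product structure of $\mathbb{P}^*$: the factor $W_k=W_k\circ\pi_\emptyset$ depends only on the coordinate indexed by $\emptyset$, whereas $\omega\mapsto\mathbf 1_E(\eta_k\omega)\,\mu([j],\eta_k\omega)$ is a function of $\eta_k\omega$ alone and so depends only on the coordinates indexed by $\{kl:l\in\Lambda^*\}$, a set that does not contain $\emptyset$; hence these two are $\mathbb{P}^*$-independent and the integral factors. Since $l\mapsto kl$ is injective on $\Lambda^*$, one also has $(\eta_k)_*\mathbb{P}^*=\mathbb{P}^*$, and therefore
\begin{equation*}
\mathbb{Q}\bigl([kj]\times\eta_k^{-1}E\bigr)=\mathbb{E}(W_k)\int_{\Omega^*}\mathbf 1_E(\omega)\,\mu([j],\omega)\,\mathbb{P}^*(\mathrm d\omega)=\mathbb{E}(W_k)\,\mathbb{Q}\bigl([j]\times E\bigr).
\end{equation*}
Summing over $k\in\Lambda$ and using $\sum_{k\in\Lambda}\mathbb{E}(W_k)=1$ gives $\mathbb{Q}(T^{-1}([j]\times E))=\mathbb{Q}([j]\times E)$, which finishes the proof.

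The argument is largely bookkeeping, and I expect the fussy points to be: checking that $\{[j]\times E\}$ is genuinely a generating $\pi$-system; justifying the use of Fubini and of the various identities, which is unproblematic here because $\mu([j],\cdot)\le\|\mu\|=Y$ with $\mathbb{E}(Y)=1$, so every integral is finite and the extinction event $\{\|\mu\|=0\}$ contributes zero throughout; and --- the step most prone to index errors --- keeping the action of $\eta_k$ consistent with the word-concatenation convention so that the identity $\mu([kj],\cdot)=W_k\,\mu([j],\eta_k\,\cdot)$ comes out correctly. A variant that avoids the $\pi$-system reduction would instead show $\int g\circ T\,\mathrm d\mathbb{Q}=\int g\,\mathrm d\mathbb{Q}$ directly for bounded measurable $g$, using the statistical self-similarity \eqref{ss} on each cylinder $[k]$; it has the same structure.
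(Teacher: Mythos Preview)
Your proof is correct and follows essentially the same approach as the paper: split according to the first symbol, use the self-similarity identity $\mu([kj],\omega)=W_k(\omega)\,\mu([j],\eta_k\omega)$ (which is the cylinder form of \eqref{ss} together with \eqref{muij}), factor using the independence of $W_k=W_k\circ\pi_\emptyset$ from $\eta_k\omega$ and the invariance $(\eta_k)_*\mathbb{P}^*=\mathbb{P}^*$, and conclude via $\sum_k\mathbb{E}(W_k)=1$. The only difference is presentational: the paper carries out the computation directly for an arbitrary $B\in\mathcal{F}'$ using $\chi_B$ inside the double integral (precisely the ``variant'' you mention in your final paragraph), whereas you first reduce to rectangles $[j]\times E$ via a $\pi$--$\lambda$ argument.
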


\begin{proof}
For all $B\in\mathcal{F}'$
\begin{eqnarray*}
\mathbb{Q}(T^{-1}B)&=&\int_{\Omega^*}\int_{\Lambda^\mathbb{N}} \chi_{T^{-1}B}(\underline{i},\omega) \, \mu(\mathrm{d}\underline{i},\omega)\, \mathbb{P}^*(\mathrm{d}\omega)\\
&=&\int_{\Omega^*}\int_{\Lambda^\mathbb{N}} \chi_{B}(\sigma\underline{i},\eta_{\underline{i}|_1}\omega) \, \mu(\mathrm{d}\underline{i},\omega)\, \mathbb{P}^*(\mathrm{d}\omega)\\
&=&\sum_{j\in\Lambda}\int_{\Omega^*}\int_{[j]} \chi_{B}(\sigma\underline{i},\eta_{j}\omega) \, \mu(\mathrm{d}\underline{i},\omega)\, \mathbb{P}^*(\mathrm{d}\omega)\\
&=&\sum_{j\in\Lambda}\int_{\Omega^*}W^{[\emptyset]}_j(\omega)\int_{[j]} \chi_{B}(\sigma\underline{i},\eta_{j}\omega) \, \mu(\mathrm{d}\sigma\underline{i},\eta_j\omega)\, \mathbb{P}^*(\mathrm{d}\omega)\\
&=&\sum_{j\in\Lambda}\int_{\Omega^*}W^{[\emptyset]}_j(\omega)\int_{\Lambda^{\mathbb{N}}} \chi_{B}(\underline{i},\eta_{j}\omega) \, \mu(\mathrm{d}\underline{i},\eta_j\omega)\, \mathbb{P}^*(\mathrm{d}\omega)\\
&=&\sum_{j\in\Lambda}\mathbb{E}(W_j)\mathbb{Q}(B)\\
&=&\mathbb{Q}(B).
\end{eqnarray*}
\end{proof}

\begin{proposition}\label{ge}
The dynamical system $(\Omega',\mathcal{F}',\mathbb{Q},T)$ is mixing.
\end{proposition}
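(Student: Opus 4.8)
The plan is to verify the mixing relation $\mathbb{Q}(T^{-n}A\cap B)\to\mathbb{Q}(A)\mathbb{Q}(B)$ on a generating class and then extend it to all of $\mathcal{F}'$. Since $\mathcal{F}'=\mathcal{B}\otimes\mathcal{F}^*$, the \emph{cylinder rectangles} $[u]\times E$, with $u\in\Lambda^*$ and $E\in\mathcal{F}^*$ depending on only finitely many of the coordinates $\omega_j$, $j\in\Lambda^*$, form a semi-algebra generating $\mathcal{F}'$, so their finite disjoint unions form an algebra $\mathcal{A}$ with $\sigma(\mathcal{A})=\mathcal{F}'$. By the standard approximation argument, in which the $T$-invariance of $\mathbb{Q}$ (Lemma~\ref{Ti}) is used through $\mathbb{Q}(T^{-n}A'\,\triangle\,T^{-n}A)=\mathbb{Q}(A'\triangle A)$, it is enough to prove the convergence for $A,B\in\mathcal{A}$; and since $(A,B)\mapsto\mathbb{Q}(T^{-n}A\cap B)-\mathbb{Q}(A)\mathbb{Q}(B)$ is additive under disjoint unions in each variable, only for cylinder rectangles $A=[u]\times E_A$ and $B=[v]\times E_B$. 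Put $\ell=|v|$.

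I would then compute $\mathbb{Q}(T^{-n}A\cap B)$ directly for $n\geq\ell$. Inserting the definition~\eqref{peydef} of $\mathbb{Q}$ and the identity $T^n(\underline{i},\omega)=(\sigma^n\underline{i},\eta_{\underline{i}|_n}\omega)$, and splitting $[v]=\bigsqcup_{w\in\Lambda^{n-\ell}}[vw]$ (on $[vw]$ the word $\underline{i}|_n=vw$ is constant, so $\eta_{\underline{i}|_n}\omega=\eta_{vw}\omega$), the statistical self-similarity~\eqref{ss} applied to the inner integral (change of variable $\underline{i}\mapsto\sigma^n\underline{i}$ on $[vw]$) together with~\eqref{muij} gives
\[
\mathbb{Q}(T^{-n}A\cap B)=\sum_{w\in\Lambda^{n-\ell}}\mathbb{E}^*\!\Big[\chi_{E_B}\,Q_{vw}\cdot\chi_{E_A}(\eta_{vw}\,\cdot)\,\mu([u],\eta_{vw}\,\cdot)\Big],
\]
where $\mathbb{E}^*$ is the expectation under $\mathbb{P}^*$; by~\eqref{muij} the last two factors equal $\chi_{E_A}(\eta_{vw}\omega)\,\mu^{[vw]}([u],\omega)$.

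The crucial step is to decouple each summand. Once $n$ exceeds the fixed finite depths of $E_A$ and $E_B$, the factor $\chi_{E_B}\,Q_{vw}$ is measurable with respect to $\sigma(\omega_j:|j|\leq n-1)$, while $\chi_{E_A}(\eta_{vw}\,\cdot)\,\mu^{[vw]}([u],\cdot)$ is measurable with respect to $\sigma(\omega_j:vw\text{ is a prefix of }j)$; the two index sets are disjoint, so under the product measure $\mathbb{P}^*$ these factors are independent. Since $\eta_{vw}$ relabels coordinates by an injection of $\Lambda^*$ it preserves $\mathbb{P}^*$, whence $\mathbb{E}^*\big[\chi_{E_A}(\eta_{vw}\,\cdot)\,\mu([u],\eta_{vw}\,\cdot)\big]=\mathbb{E}^*\big[\chi_{E_A}\,\mu([u],\cdot)\big]=\mathbb{Q}(A)$. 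Using $Q_{vw}=Q_v\,Q^{[v]}_w$ and $\sum_{w\in\Lambda^{n-\ell}}Q^{[v]}_w=Y^{[v]}_{n-\ell}$, we obtain
\[
\mathbb{Q}(T^{-n}A\cap B)=\mathbb{Q}(A)\,\mathbb{E}^*\!\Big[\chi_{E_B}\sum_{w\in\Lambda^{n-\ell}}Q_{vw}\Big]=\mathbb{Q}(A)\,\mathbb{E}^*\!\big[\chi_{E_B}\,Q_v\,Y^{[v]}_{n-\ell}\big].
\]

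Finally I would let $n\to\infty$. As $Q_v$ is measurable with respect to $\sigma(\omega_j:j\text{ a proper prefix of }v)$ and $Y^{[v]}_{n-\ell}$ with respect to $\sigma(\omega_{vu}:|u|\leq n-\ell-1)$, they are independent; assumption~(a1) makes the martingale $\{Y^{[v]}_k\}_{k\geq1}$ bounded in $L^p$ for some $p>1$, so $Y^{[v]}_{n-\ell}\to Y^{[v]}$ a.s.\ and $\sup_n\mathbb{E}\big[(Q_v Y^{[v]}_{n-\ell})^p\big]=\mathbb{E}[Q_v^p]\,\sup_n\mathbb{E}\big[(Y^{[v]}_{n-\ell})^p\big]<\infty$. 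Hence $\chi_{E_B}Q_v Y^{[v]}_{n-\ell}\to\chi_{E_B}Q_v Y^{[v]}$ in $L^1(\mathbb{P}^*)$, and by~\eqref{mu}
\[
\mathbb{Q}(T^{-n}A\cap B)\longrightarrow\mathbb{Q}(A)\,\mathbb{E}^*\big[\chi_{E_B}\,Q_v\,Y^{[v]}\big]=\mathbb{Q}(A)\,\mathbb{E}^*\big[\chi_{E_B}\,\mu([v],\cdot)\big]=\mathbb{Q}(A)\,\mathbb{Q}(B),
\]
which is the required mixing. The main obstacle is bookkeeping rather than analysis: one must track exactly which coordinates $\omega_j$ each factor in the expansion depends on, and use that $E_A,E_B$ have fixed finite depth so that, for $n$ large, the ``shallow'' set $\{j:|j|\leq n-1\}$ and the ``deep'' set $\{j:vw\text{ is a prefix of }j\}$ become disjoint; the only place the hypotheses are genuinely invoked is the uniform integrability in the last step, which is precisely what the $L^p$-condition~(a1) supplies.
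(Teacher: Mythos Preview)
Your proof is correct and follows the same overall strategy as the paper: reduce to a generating semi-algebra of cylinder rectangles and use the product/self-similar structure to verify the mixing identity there. Your detailed expansion via \eqref{ss} and \eqref{muij} is exactly what lies behind the paper's one-line assertion that ``$T^{-n}A$ and $B$ are independent for all $n\ge n_0$.''

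One simplification is worth noting. Your final limiting step is unnecessary: once $n$ exceeds the depth of $E_B$ (and $n\ge\ell$), the tower property already gives \emph{exact} equality. Indeed, $\chi_{E_B}Q_v$ is measurable with respect to $\mathcal{G}_n:=\sigma(\omega_j:|j|\le n-1)$, and since $Y^{[v]}=\sum_{w\in\Lambda^{n-\ell}}Q^{[v]}_wY^{[vw]}$ with $\mathbb{E}(Y^{[vw]})=1$, one has $\mathbb{E}^*[Y^{[v]}\mid\mathcal{G}_n]=Y^{[v]}_{n-\ell}$, hence
\[
\mathbb{E}^*\!\big[\chi_{E_B}\,Q_v\,Y^{[v]}_{n-\ell}\big]=\mathbb{E}^*\!\big[\chi_{E_B}\,Q_v\,Y^{[v]}\big]=\mathbb{Q}(B)
\]
with no passage to the limit. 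Thus $\mathbb{Q}(T^{-n}A\cap B)=\mathbb{Q}(A)\,\mathbb{Q}(B)$ for all sufficiently large $n$, which is precisely the paper's claim of eventual independence. In particular, the $L^p$ uniform integrability is not ``genuinely invoked'' here beyond its role in ensuring $\mathbb{E}(Y)=1$; your remark to that effect slightly overstates matters.
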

\begin{proof}
Let $\mathcal{A}$ be the semi-algebra consisting of sets of the form
\[
\underline{i}|_k=j, \ W^b_a\in B^b_a, \ a\in \Lambda, \ b\in \cup_{i=1}^k \Lambda^i,
\]
for $k\in\mathbb{N}$, $j\in \Lambda^k$ and $B^b_a$ Borel subsets of $[0,\infty)$. It is clear that $\mathcal{A}$ generates $\mathcal{F}'$, so we only need to verify that for $A,B\in \mathcal{A}$, $\lim_{n\to\infty}\mathbb{Q}(T^{-n}A\cap B)=\mathbb{Q}(A)\mathbb{Q}(B)$. This follows from the fact that by the construction of $\mathcal{A}$, given $A,B\in\mathcal{A}$, there exists $n_0$ such that $T^{-n}A$ and $B$ are independent for all $n\ge n_0$. 
\end{proof}

\subsection{Normalised random cascade measures}

For $i\in\Lambda^*$ define
\[
\bar{\mu}_{[i]}=\chi_{\{\mu([i])>0\}}\frac{\mu|_{[i]}}{\mu([i])} \quad \text{ and }  \quad \bar{\mu}^{[i]}=\chi_{\{\|\mu^{[i]}\|>0\}}\frac{\mu^{[i]}}{\|\mu^{[i]}\|},
\]
with the convention that $\bar{\mu}=\bar{\mu}^{[\emptyset]}$. Then $\bar{\mu}_{[i]}$ and  $ \bar{\mu}^{[i]}$ are either probability measures or trivial. If $|i|=n$, then from \eqref{ss} we have
\begin{equation}\label{rescal}
 \bar{\mu}_{[i]}\circ \sigma^{-n}=\chi_{\{Q_i>0\}}\bar{\mu}^{[i]}.
\end{equation}
The measure sequence $\{\bar{\mu}^{[\cdot|_n]}\}_{n\ge 0}$ is a stationary process under the Peyri\`ere measure. This sequence is similar to Furstenberg's CP-processes: Let $\Delta$ be the natural partition operator on symbolic space: $\Delta [i]=\{[ij]:j\in \Lambda\}$ for $i\in\Lambda^*$. Starting from $(\bar{\mu},[\emptyset])$ we move to  $(\bar{\mu}^{[i]},[i])$ with probability $\bar{\mu}([i])$ for $i\in \Lambda$, and from $(\bar{\mu}^{[i]},[i])$ we move to $(\bar{\mu}^{[ij]},[ij])$ with probability $\bar{\mu}^{[i]}([j])$ for $j\in\Lambda$, and continue in this way. The resulting measure sequence clearly falls into the same sample space as $\{\bar{\mu}^{[\cdot|_n]}\}_{n\ge 0}$, but it seems unlikely they will have the same law unless the random cascade measures degenerate to Bernoulli measures.

\subsection{The compact group extension}

Let $G=\overline{\langle O_i:i\in \Lambda\rangle}$ be the closed subgroup of $SO(d,\mathbb{R})$ generated by the orthogonal maps $\{O_i,  i\in\Lambda\}$. For future reference note that $G$ also equals the closed subsemigroup  generated by the orthogonal maps $\{O_i,  i\in\Lambda\}$; this follows since the inverse of any element in a compact group can be approximated arbitrarily closely by positive powers of the element.  Let $\mathcal{B}_G$ be  Borel $\sigma$-algebra of $G$ and let $\xi$ be its normalized Haar measure. Define the measurable map $\phi: \Omega' \ni (\underline{i},\omega)\mapsto  O_{\underline{i}|_1}\in G$. Let $X=\Omega'\times G$ and define the skew product
\[
T_\phi: X \ni  (\omega',g) \mapsto (T\omega',  g\phi(\omega')) \in X.
\]
It is easy to verify that the product measure $\mathbb{Q}\times \xi$ is $T_\phi$-invariant.

\begin{proposition}\label{gee}
The dynamical system $(X,\mathcal{F}'\otimes \mathcal{B}_G,\mathbb{Q}\times \xi,T_\phi )$ is ergodic.
\end{proposition}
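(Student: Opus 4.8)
The approach I would take is to apply the compact group extension theorem: for a skew product $T_\phi$ over an \emph{ergodic} base by a cocycle $\phi$ into a compact group $G$, the system $(X,\mathbb{Q}\times\xi,T_\phi)$ is ergodic precisely when $\phi$ is not measurably cohomologous to a cocycle taking values in a proper closed subgroup of $G$ --- equivalently, when the (closed) group of essential values of $\phi$ equals $G$. The base $(\Omega',\mathcal{F}',\mathbb{Q},T)$ is ergodic because it is mixing by Proposition~\ref{ge}, so the whole problem reduces to verifying this condition on $\phi$.

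To do that I would exploit the special form of $\phi$. Since $\phi(\omega')=O_{\underline{i}|_1}$ depends only on the first symbol of $\underline{i}$, iterating gives $\phi(\omega')\phi(T\omega')\cdots\phi(T^{n-1}\omega')=O_{\underline{i}|_n}$, a product of the fixed rotations $O_j$ along the symbol path. A computation of the kind used in the proof of Lemma~\ref{Ti} shows two things: under $\mathbb{Q}$ the sequence $\underline{i}$ is Bernoulli with weights $(\mathbb{E}(W_j))_{j\in\Lambda}$, and $\underline{i}|_n$ is independent of $T^{-n}\mathcal{F}'$ for every $n$. Combining these with the fact, noted above, that $G$ is the closure of the sub-semigroup generated by $\{O_j:j\in\Lambda\}$ --- so that the products $O_{j_1}\cdots O_{j_\ell}$ over finite words are dense in $G$ --- one checks that each $O_j$ is an essential value of $\phi$: given a set of positive measure and a neighbourhood $U$ of $O_j$, the Bernoulli structure lets one choose arbitrarily long words whose rotation lies in $U$ while keeping the set and its pre-image under a suitable power of $T$ of positive joint measure. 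Since the essential values form a closed subgroup, they then contain $\overline{\langle O_j:j\in\Lambda\rangle}=G$.

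Equivalently, one can proceed by Peter--Weyl: $L^2(G)$ splits into the isotypic components $M_\rho$ of the irreducible representations of $G$, and $T_\phi$, acting on the $G$-fibre by right translation, preserves this splitting; ergodicity of $T_\phi$ then reduces to base ergodicity on the trivial component and, on each non-trivial irreducible component $\rho\colon G\to U(V_\rho)$, to the non-existence of a non-zero square-integrable $B\colon\Omega'\to\operatorname{End}(V_\rho)$ with $B(\omega')=\rho(O_{\underline{i}|_1})\,B(T\omega')$ (obtained from invariance of the $\rho$-component using that $\{\rho(g):g\in G\}$ spans $\operatorname{End}(V_\rho)$). Ruling such a $B$ out uses the same Bernoulli and semigroup-density facts, after noting that $\|B(\cdot)\|_{\mathrm{HS}}$ is $T$-invariant hence a.e.\ constant: iterating the relation forces $B$ to depend on arbitrarily distant digits of $\underline{i}$, which is incompatible with $B\in L^2$.

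The main obstacle is precisely this verification --- showing that $\phi$ has full essential-value group, or equivalently that no non-trivial irreducible representation admits a cocycle eigenfunction; everything else is formal bookkeeping. One should also flag a mild standing hypothesis that is needed: $\mathbb{E}(W_j)>0$ for every $j\in\Lambda$ (equivalently, $G$ should be generated only by the similarities that occur with positive probability), since if $W_j=0$ almost surely then $f_j$ never contributes to $\mu$ and $O_j$ need not be an essential value of $\phi$.
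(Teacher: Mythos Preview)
Your overall framework is right and coincides with the paper's: both start from the compact group extension criterion, so that ergodicity of $T_\phi$ reduces to showing that the only measurable $F:\Omega'\to\mathbb{C}^k$ with
\[
F(T\omega')=R(\phi(\omega'))\,F(\omega')\quad\text{for }\mathbb{Q}\text{-a.e. }\omega'
\]
(for an irreducible unitary representation $R$ of $G$) is the trivial one. Where you and the paper diverge is in how this equation is analysed.

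The paper does \emph{not} verify essential values directly, nor does it argue about tail dependence of $F$. Instead it integrates out the randomness: from $F$ it forms the vector measure $\lambda(I)=\int_{\Omega^*}\int_I F\,d\mu\,d\mathbb{P}^*$ on $\Lambda^{\mathbb N}$, shows $\lambda\ll\mu_p$ where $\mu_p$ is the Bernoulli measure with weights $p_i=\mathbb{E}(W_i)$, and checks that the Radon--Nikodym derivative $f=d\lambda/d\mu_p$ satisfies the \emph{same} cocycle equation $f(\sigma\underline{i})=R(O_{\underline{i}|_1})f(\underline{i})$ on the Bernoulli system. Parry's result that the Bernoulli group extension $(\Lambda^{\mathbb N}\times G,\mu_p\times\xi,\sigma_\phi)$ is ergodic then forces $R$ trivial, and base ergodicity gives $F$ constant. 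The virtue of this route is that it reduces the hard step to an already known deterministic result; the ``coboundary'' obstruction is handled on the much simpler Bernoulli shift rather than on the full random system.

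Your essential-values route (Approach 1) is a legitimate alternative, but the verification you sketch is where the work lies and it is not immediate: you must show that for a general $A$ with $\mathbb{Q}(A)>0$ and a \emph{specific} word $w$ with $O_w\in U$, one has $\mathbb{Q}(A\cap T^{-n}A\cap[w])>0$. Mixing of $T$ controls $\mathbb{Q}(A\cap T^{-n}A)$ but not its intersection with a fixed cylinder; one has to approximate $A$ by finite-depth events and track carefully how $\mu([w])$ interacts with those constraints. This can be done, but it is more than ``formal bookkeeping''. Your Approach 2 shares the paper's starting point, but the concluding sentence --- that iterating forces $F$ to depend on arbitrarily distant digits ``which is incompatible with $F\in L^2$'' --- is not a proof: plenty of $L^2$ functions depend on all coordinates, and $T^n\omega'$ still carries information from the initial block through $\eta_{\underline{i}|_n}\omega$. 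This is exactly the gap the paper's averaging trick closes.

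Your final remark that one needs $\mathbb{E}(W_j)>0$ for every $j$ (equivalently, $G$ should be generated by the $O_j$ that occur with positive weight) is well taken; the paper uses the Bernoulli measure $\mu_p$ with these weights and invokes Parry's theorem, which requires the full-shift assumption implicitly.
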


\begin{proof}
From Proposition \ref{ge} we know that $(\Omega',\mathcal{F}',\mathbb{Q},T)$ is ergodic. Using the compact group extension theorem, see for example \cite{KeyNew76}, $T_\phi$ is ergodic if and only if the equation
\begin{equation}\label{ir}
F(T\omega')=R(\phi(\omega'))F(\omega') \text{ for } \mathbb{Q}\text{-a.e. } \omega',
\end{equation}
where $R$ is an irreducible (unitary) representation (of degree $k$, say) and $F:\Omega'\mapsto \mathbb{C}^k$ is measurable, has only the trivial solution $R$, the trivial $1$-dimensional representation, with $F$ constant. Let $\mu_p$ is the Bernoulli measure on $\Lambda^\mathbb{N}$ corresponding to the probability vector $p=(\mathbb{E}(W_i))_{i\in\Lambda}$. From the measuable function $F$ in \eqref{ir} we may construct the following vector measure $\lambda$ on $\Lambda^\mathbb{N}$, defined as 
\[
\lambda(I)=\int_{\Omega^*}\int_I F(\underline{i},\omega)\, \mu(\mathrm{d}\underline{i},\omega) \, \mathbb{P}^*(\mathrm{d}\omega), \ \forall\, I \in\mathcal{B}.
\]
Then $\lambda$ is absolutely continuous with respect to $\mu_p$ since, for any set $E\in \mathcal{B}$ with $\mu_p(E)=0$, 
\begin{eqnarray*}
|\lambda(E)|&\le& \limsup_{R\to \infty} \int_{\Omega^*}\int_{E} \chi_{\{|F|\le R\}} |F(\underline{i},\omega)|\, \mu(\mathrm{d}\underline{i},\omega) \, \mathbb{P}^*(\mathrm{d}\omega)\\
&\le&  \limsup_{R\to \infty} R\cdot \mu_p(E)=0.
\end{eqnarray*}\label{rnd}
Denote by $f=\mathrm{d}\lambda /\mathrm{d}\mu_p$ the corresponding Radon-Nikodym derivative. In particular 
\begin{equation}\label{rnd}
f(\underline{i})=\lim_{n\to\infty}\frac{\lambda([\underline{i}|_n])}{\mu_p([\underline{i}|_n])} \text{ for } \mu_p\text{-a.e. } \underline{i}.
\end{equation}
Now fix $I=[i_1i_2\cdots i_n]$. From \eqref{ir} 
\begin{eqnarray*}
R(O_{i_1})\lambda([i_1i_2\cdots i_n])&=&\int_{\Omega^*}\int_{[i_1i_2\cdots i_n]} R(O_{i_1}) F(\underline{i},\omega)\, \mu(\mathrm{d}\underline{i},\omega) \, \mathbb{P}^*(\mathrm{d}\omega)\\
&=&\int_{\Omega^*}\int_{[i_1i_2\cdots i_n]} F(\sigma\underline{i},\eta_{i_1}\omega)\, \mu(\mathrm{d}\underline{i},\omega) \, \mathbb{P}^*(\mathrm{d}\omega)\\
&=&\int_{\Omega^*}W_{i_1}\int_{[i_2\cdots i_n]} F(\underline{i},\omega)\, \mu^{[i_1]}(\mathrm{d}\underline{i},\omega) \, \mathbb{P}^*(\mathrm{d}\omega)\\
&=&\mu_p([i_1])\lambda([i_2\cdots i_n]).
\end{eqnarray*}
This yields
\[
\frac{\lambda([i_2\cdots i_n])}{\mu_p([i_2\cdots i_n])}=R(O_{i_1})\frac{\lambda([i_1i_2\cdots i_n])}{\mu_p([i_1i_2\cdots i_n])}.
\]
Together with \eqref{rnd} we finally get
\[
f(\sigma \underline{i})=R(O_{\underline{i}|_1}) f (\underline{i}) \text{ for } \mu_p\text{-a.e. } \underline{i}.
\]
From \cite[Corollary 4.5]{Pa97} we know that the dynamical system $(\Lambda^\mathbb{N}\times G,\mathcal{B}\otimes \mathcal{B}_G,\mu_p\times \xi,\sigma_\phi)$ is ergodic, where $\sigma_\phi(\underline{i},g)=(\sigma \underline{i}, gO_{\underline{i}|_1})$ is a compact group extension of the Bernoulli full-shift with $\sigma_\phi$ having a dense orbit. By using the compact group extension theorem again this implies that $R$ must be the trivial $1$-dimensional representation. Applying this to \eqref{ir} we get that
\[
F(T\omega')=F(\omega') \text{ for } \mathbb{Q}\text{-a.e. } \omega',
\]
so  $F$ is constant using Proposition \ref{ge}.
\end{proof}

\subsection{Dimension and entropy}\label{secde} Let $\varphi:Y\mapsto Z$ be a continuous mapping between two metric spaces $Y$ and $Z$. For a Borel measure $\nu$ on $Y$ write
\[
\varphi\nu=\nu\circ \varphi^{-1}
\]
for the pull-back measure of $\nu$ on $Z$ through $\varphi$.

For a measure $\nu$ and $x\in \mathrm{supp}(\nu)$ let
\[
D_\nu(x)=\lim_{r\to 0}\frac{\log \nu(B(x,r))}{\log r}
\]
whenever the limit exists, where $B(x,r)$ is the closed ball of centre $x$ and radius $r$. If for some $\alpha\ge 0$ we have $D_\nu(x)=\alpha$ for $\nu$-a.e. $x$ we say that $\nu$ is {\it exact-dimensional}.

For $0<r<1$ and $\nu$ a probability measure supported by a compact subset $A$ of $\mathbb{R}^d$, let
\[
H_r(\nu)=-\int_A \log \nu(B(x,r)) \, \nu(\mathrm{d}x)
\]
be the $r$-{\it scaling entropy} of $\nu$. Note that, writing ${\mathcal M}$ for the probability measures supported by $A$, the map $H_r: {\mathcal M} \to \mathbb{R}\cup\{\infty\}$ need not be continuous in the weak-$\star$ topology. However, $H_r$ is lower semicontinuous as it may be expressed as the limit of an increasing sequence of continuous functions of the form $\nu \mapsto \int \max\{k, \log(1/\int f_k (x-y)\nu(dy))\nu(dx)\}$ where $f_k$ is a decreasing sequence of continuous functions approximating $\chi_{B(0,r)}$.  
The {\it lower entropy dimension} of $\nu$ is defined as
\[
\dim_e \nu=\liminf_{r\to 0}\frac{H_r(\nu)}{-\log r}
\]
and the {\it Hausdorff dimension}  of $\nu$ is $\dim_H \nu = \inf\{\dim_H A : \nu(A) >0\}$.
Then
\[
\dim_H \nu\le \dim_e \nu,
\]
with equality  when $\nu$ is exact-dimensional, see \cite{Fal97,Falconer03}.

\subsection{Conditional measure, information and entropy}

The following result is the conditional measure theorem of Rohlin \cite{Ro49} adapted to symbolic spaces.

\begin{theorem}\label{Rohlin}
Let $\eta$ be a countable $\mathcal{B}$-measurable partition of $\Lambda^\mathbb{N}$ in the sense that the quotient space $\Lambda^\mathbb{N}/\eta$ is separated by a countable number of measurable sets in $\mathcal{B}$. Let $\nu$ be a Borel probability measure on $\Lambda^\mathbb{N}$. Then for every $\underline{i}$ in a set of full $\nu$-measure, there is a probability measure $\nu_{\underline{i}}^\eta$ defined on $\eta(\underline{i})$ (the unique element in $\eta$ that contains $\underline{i}$) such that for any measurable set $B\in\mathcal{B}$, the mapping $\underline{i}\mapsto \nu_{\underline{i}}^\eta(B)$ is $\widehat{\eta}$-measurable ($\widehat{\eta}$ is the $\sigma$-algebra generated by $\eta$) and
\[
\nu(B)=\int_{\Lambda^{\mathbb{N}}} \nu_{\underline{i}}^\eta(B) \, \nu(\mathrm{d}\underline{i}).
\]
These properties imply that for any $f\in L^1(\Lambda^\mathbb{N},\mathcal{B},\nu)$ we have $\nu_{\underline{i}}^\eta(f)=\mathbb{E}_\nu(f|\widehat{\eta})$ for $\nu$-a.e. $\underline{i}$, and $\nu(f)=\int \mathbb{E}_\nu(f|\widehat{\eta}) \, \mathrm{d}\nu$.
\end{theorem}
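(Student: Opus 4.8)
The plan is to build the conditional measures $\nu_{\underline i}^\eta$ by hand out of conditional expectations of cylinder indicators and then extend, reading off the three asserted properties and the $L^1$ consequence along the way; alternatively one could simply quote the abstract disintegration theorem for Polish probability spaces, since $(\Lambda^{\mathbb N},\mathcal B,\nu)$ is one, but the explicit description at the cylinder level is what is actually used in Section 3. First, for each finite word $j\in\Lambda^*$ I would fix a genuinely $\widehat\eta$-measurable version $g_j$ of $\mathbb E_\nu(\chi_{[j]}\mid\widehat\eta)$. Positivity and linearity of conditional expectation give, for each $n$ and $j\in\Lambda^n$, the $\nu$-a.e.\ identities $g_\emptyset=1$, $0\le g_j\le1$ and $g_j=\sum_{a\in\Lambda}g_{ja}$; as $\Lambda^*$ is countable these hold simultaneously on a single Borel set $\Omega_0$ with $\nu(\Omega_0)=1$. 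For $\underline i\in\Omega_0$ the vectors $\bigl(g_j(\underline i)\bigr)_{j\in\Lambda^n}$, $n\ge0$, then form a consistent family of finite-dimensional distributions, so Kolmogorov's extension theorem --- here elementary since $\Lambda^{\mathbb N}$ is a countable product of finite sets, and in any case trivialised by the fact that compactness of $\Lambda^{\mathbb N}$ makes every finitely additive premeasure on the clopen cylinder algebra automatically $\sigma$-additive --- produces a unique Borel probability measure $\nu_{\underline i}^\eta$ on $\Lambda^{\mathbb N}$ with $\nu_{\underline i}^\eta([j])=g_j(\underline i)$ for every $j$. For $\underline i\notin\Omega_0$ I would just set $\nu_{\underline i}^\eta=\nu$.

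Next I would establish the measurability of $\underline i\mapsto\nu_{\underline i}^\eta(B)$ and the disintegration identity by a monotone-class argument. For $A\in\widehat\eta$ and a cylinder $[j]$ one has $\int_A\nu_{\underline i}^\eta([j])\,\nu(\mathrm d\underline i)=\int_A g_j\,\mathrm d\nu=\nu(A\cap[j])$ by definition of $g_j$. The collection of $B\in\mathcal B$ for which $\underline i\mapsto\nu_{\underline i}^\eta(B)$ is $\widehat\eta$-measurable and $\int_A\nu_{\underline i}^\eta(B)\,\mathrm d\nu=\nu(A\cap B)$ for all $A\in\widehat\eta$ is a $\lambda$-system (measurability is preserved under monotone pointwise limits, and the integral identities under monotone convergence) that contains the $\pi$-system of cylinders, so by Dynkin's lemma it is all of $\mathcal B$. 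In particular $\underline i\mapsto\nu_{\underline i}^\eta(B)$ is $\widehat\eta$-measurable with $\nu_{\underline i}^\eta(B)=\mathbb E_\nu(\chi_B\mid\widehat\eta)(\underline i)$ for $\nu$-a.e. $\underline i$, and taking $A=\Lambda^{\mathbb N}$ yields the displayed identity $\nu(B)=\int\nu_{\underline i}^\eta(B)\,\nu(\mathrm d\underline i)$.

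It then remains to check that $\nu_{\underline i}^\eta$ is carried by the atom $\eta(\underline i)$ and to deduce the $L^1$ statement. Here I would use the hypothesis on $\eta$: take $\widehat\eta$ to be the $\sigma$-algebra generated by a countable family $\{C_n\}_{n\ge1}\subseteq\mathcal B$ that separates the quotient $\Lambda^{\mathbb N}/\eta$ (this is the precise meaning of ``the $\sigma$-algebra generated by the measurable partition $\eta$''), so that each $C_n$ is a union of atoms, $C_n\in\widehat\eta$, and $\eta(\underline i)=\bigcap_{n:\,\underline i\in C_n}C_n\cap\bigcap_{n:\,\underline i\notin C_n}C_n^c$ is exactly the $\widehat\eta$-atom through $\underline i$. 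Since $\mathbb E_\nu(\chi_{C_n}\mid\widehat\eta)=\chi_{C_n}$ $\nu$-a.e., applying the previous paragraph with $B=C_n$ and $B=C_n^c$ gives $\nu_{\underline i}^\eta(C_n)=\chi_{C_n}(\underline i)$ for $\nu$-a.e. $\underline i$; intersecting the corresponding full-measure sets over the countably many $n$ shows that for $\nu$-a.e. $\underline i$ the measure $\nu_{\underline i}^\eta$ charges $C_n$ exactly when $\underline i\in C_n$, which forces $\nu_{\underline i}^\eta\bigl(\eta(\underline i)\bigr)=1$. Finally, $\nu_{\underline i}^\eta(f)=\mathbb E_\nu(f\mid\widehat\eta)(\underline i)$ $\nu$-a.e.\ holds for $f$ an indicator of a Borel set by the disintegration identity, hence for simple functions by linearity, for bounded measurable $f$ by uniform approximation (both sides converging), and for $f\in L^1(\nu)$ by truncation and monotone convergence; integrating this against $\nu$ gives $\nu(f)=\int\mathbb E_\nu(f\mid\widehat\eta)\,\mathrm d\nu$.

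The one point that needs genuine care --- and where the hypothesis that $\Lambda^{\mathbb N}/\eta$ be countably separated is indispensable --- is the support statement: one must match $\widehat\eta$ with the countable separating family so that $\widehat\eta$-conditional expectations fix each $C_n$ and $\eta(\underline i)$ really is the common refinement at $\underline i$ of the $C_n$ and their complements, and one must then dispose of countably many exceptional $\nu$-null sets simultaneously. Everything else --- the existence of $\widehat\eta$-measurable versions of the conditional expectations, the extension theorem (made trivial by the compactness of $\Lambda^{\mathbb N}$), and the monotone-class propagation of the two identities --- is routine.
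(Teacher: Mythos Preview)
Your proof is correct and self-contained; the paper, by contrast, does not prove this statement at all. It presents Theorem~\ref{Rohlin} as a quotation of Rohlin's conditional measure theorem, citing \cite{Ro49}, and immediately proceeds to use it. So there is no ``paper's own proof'' to compare against: you have supplied one where the authors simply invoked the literature.

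That said, your construction is the standard one and is well executed. The one place worth a remark is your identification of $\widehat\eta$ with $\sigma(\{C_n\})$. In general the $\sigma$-algebra of \emph{all} $\mathcal B$-measurable unions of $\eta$-atoms can be strictly larger than the countably generated $\sigma(\{C_n\})$, so the two conditional expectations could in principle differ. On a standard Borel space such as $(\Lambda^{\mathbb N},\mathcal B,\nu)$, however, they agree $\nu$-almost everywhere (this is part of the Rohlin--Lebesgue space machinery), so the identification is harmless here; you might just flag that you are working modulo $\nu$-null sets when you make it. Everything else --- the a.e.\ consistency of the $g_j$, the automatic $\sigma$-additivity on the clopen cylinder algebra by compactness, the $\pi$--$\lambda$ passage, and the countable intersection giving $\nu_{\underline i}^\eta(\eta(\underline i))=1$ --- is clean.
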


For any sub-Borel $\sigma$-algebra $\mathcal{A}$ of $\mathcal{B}$, any countable $\mathcal{B}$-measurable partition $\mathcal{P}$ of $\Lambda^{\mathbb{N}}$, and any Borel probability measure $\nu$ on $\Lambda^{\mathbb{N}}$ we define the {\it conditional information}
\[
\mathbf{I}_\nu(\mathcal{P}\, |\, \mathcal{A})=-\sum_{B\in\mathcal{P}} \chi_B\log\mathbb{E}_\nu(\chi_B \, |\, \mathcal{A} )
\]
and the {\it conditional entropy}
\[
\mathbf{H}_\nu(\mathcal{P}\, |\, \mathcal{A})=\int_{\Lambda^\mathbb{N}} \mathbf{I}_\nu(\mathcal{P}\, |\, \mathcal{A})(\underline{i}) \, \nu(\mathrm{d}\underline{i}).
\]
For the trivial $\sigma$-algebra $\mathcal{N}=\{\emptyset, \Lambda^{\mathbb{N}}\}$ we use the convention that $\mathbf{I}_\nu(\mathcal{P})=\mathbf{I}_\nu(\mathcal{P}\, |\, \mathcal{N})$ and $\mathbf{H}_\nu(\mathcal{P})=\mathbf{H}_\nu(\mathcal{P}\, |\, \mathcal{N})$.

\medskip

We  state, in our notation and context, the following result from Feng \& Hu \cite{FeHu09} which we will need in several places.

\begin{proposition}\label{fenghu}
Let $\nu$ be a Borel probability measure on $\Lambda^{\mathbb{N}}$. Let $\eta$ and $\mathcal{P}$ be two countable measurable partitions of $\Lambda^{\mathbb{N}}$. Let $\varphi:\Lambda^{\mathbb{N}}\mapsto \mathbb{R}^d$ be a continuous function and denote by $\mathcal{B}_\varphi$ the $\sigma$-algebra generated by $\varphi^{-1}\mathcal{B}(\mathbb{R}^d)$. Then for $\nu$-a.e. $\underline{i}\in\Lambda^{\mathbb{N}}$,
$$\lim_{r \to 0} \log \frac{\nu^\eta_{\underline{i}}\left(\varphi^{-1}(B(\varphi(\underline{i}),r))\cap \mathcal{P}(\underline{i})\right)}{\nu^\eta_{\underline{i}}\left(\varphi^{-1}(B(\varphi(\underline{i}),r))\right)} = -\mathbf{I}_{m}(\mathcal{P} \,|\, \widehat{\eta}\vee\mathcal{B}_\varphi)(\underline{i}).$$
Moreover, writing
$$h(\underline{i}) = -\inf_{r>0} \log \frac{\nu^\eta_{\underline{i}}\left(\varphi^{-1}(B(\varphi(\underline{i}),r))\cap \mathcal{P}(\underline{i})\right)}{\nu^\eta_{\underline{i}}\left(\varphi^{-1}(B(\varphi(\underline{i}),r))\right)}$$
and assuming $\mathbf{H}_{\nu}(\mathcal{P})<\infty$, then $h \geq 0$ and $h\in L^1(\Lambda^{\mathbb{N}})$ with 
$\int_{\Lambda^{\mathbb{N}}} h(\underline{i}) \leq \mathbf{H}_{\nu}(\mathcal{P})+C_d$, where $C_d$ depends only on  $d$.
\end{proposition}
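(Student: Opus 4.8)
\emph{Plan.} Both assertions will be transported to $\mathbb{R}^d$ and reduced to the differentiation and covering theorems of Besicovitch. Fix $P\in\mathcal{P}$, write $\nu^\eta_{\underline i}$ for the Rohlin disintegration of $\nu$ over $\widehat\eta$ supplied by Theorem~\ref{Rohlin}, and introduce the finite Borel measures on $\mathbb{R}^d$
\[
\theta_{\underline i}=\varphi\,\nu^\eta_{\underline i}\qquad\text{and}\qquad\theta^P_{\underline i}=\varphi\bigl(\nu^\eta_{\underline i}|_P\bigr),
\]
which depend on $\underline i$ only through the atom $\eta(\underline i)$ and satisfy $\theta^P_{\underline i}\le\theta_{\underline i}$, hence $\theta^P_{\underline i}\ll\theta_{\underline i}$ with Radon--Nikodym derivative bounded by $1$. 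For $\underline i\in P$ the quotient in the statement is exactly $\theta^P_{\underline i}(B(\varphi(\underline i),r))\big/\theta_{\underline i}(B(\varphi(\underline i),r))$; in particular it is $\le 1$, so $h\ge 0$, and (for $\nu$-a.e.\ $\underline i$) the denominators are positive for all small $r$ since $\varphi(\underline i)\in\mathrm{supp}(\theta_{\underline i})$.

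\emph{The pointwise limit.} For each fixed atom the Besicovitch differentiation theorem on $\mathbb{R}^d$ (see e.g.\ \cite{Mat95}) gives $\lim_{r\to 0}\theta^P_{\underline i}(B(x,r))/\theta_{\underline i}(B(x,r))=\tfrac{\mathrm{d}\theta^P_{\underline i}}{\mathrm{d}\theta_{\underline i}}(x)$ for $\theta_{\underline i}$-a.e.\ $x$; integrating the exceptional sets against the disintegration upgrades this to: for $\nu$-a.e.\ $\underline i$ the quotient converges to $\psi_P(\underline i):=\tfrac{\mathrm{d}\theta^P_{\underline i}}{\mathrm{d}\theta_{\underline i}}(\varphi(\underline i))$. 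Since $\eta$ is countable, the sets $E\cap\varphi^{-1}(A)$ with $E\in\eta$, $A\in\mathcal{B}(\mathbb{R}^d)$ form a $\pi$-system generating $\widehat\eta\vee\mathcal{B}_\varphi$, and for such a set the disintegration together with the change-of-variables formula for pushforwards gives
\[
\int_{E\cap\varphi^{-1}(A)}\psi_P\,\mathrm{d}\nu=\int_E\theta^P_{\underline i}(A)\,\mathrm{d}\nu(\underline i)=\nu\bigl(P\cap E\cap\varphi^{-1}(A)\bigr)=\int_{E\cap\varphi^{-1}(A)}\chi_P\,\mathrm{d}\nu ,
\]
so $\psi_P=\mathbb{E}_\nu(\chi_P\mid\widehat\eta\vee\mathcal{B}_\varphi)$ $\nu$-a.e. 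Taking $P=\mathcal{P}(\underline i)$ and using $\psi_{\mathcal{P}(\underline i)}(\underline i)>0$ $\nu$-a.e.\ (a standard property of conditional expectation), passage to logarithms yields $\lim_r\log(\cdots)=\log\mathbb{E}_\nu(\chi_{\mathcal{P}(\underline i)}\mid\widehat\eta\vee\mathcal{B}_\varphi)(\underline i)=-\mathbf{I}_\nu(\mathcal{P}\mid\widehat\eta\vee\mathcal{B}_\varphi)(\underline i)$, the first assertion.

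\emph{The $L^1$ bound.} Let $N_d$ be the multiplicity constant in the Besicovitch covering theorem. I would first prove the weak-type estimate $\nu(\{h>t\}\cap P)\le\min(N_d e^{-t},\nu(P))$ for $t>0$. Conditioning on the atom, $\{h>t\}\cap P$ meets $\eta(\underline j)$ in $P\cap\eta(\underline j)\cap\varphi^{-1}(E_{t,\underline j})$ with $E_{t,\underline j}=\{x:\theta^P_{\underline j}(B(x,r))<e^{-t}\theta_{\underline j}(B(x,r))\text{ for some }r>0\}$; covering $E_{t,\underline j}$ by these ``bad'' balls (radii exceeding $\mathrm{diam}\,\mathrm{supp}(\theta_{\underline j})$ being handled trivially) and extracting, via Besicovitch, a subcover $\{B_k\}$ of multiplicity $\le N_d$ gives
\[
\theta^P_{\underline j}(E_{t,\underline j})\le\sum_k\theta^P_{\underline j}(B_k)<e^{-t}\sum_k\theta_{\underline j}(B_k)\le N_d e^{-t}\theta_{\underline j}(\mathbb{R}^d)=N_d e^{-t},
\]
and trivially $\theta^P_{\underline j}(E_{t,\underline j})\le\|\theta^P_{\underline j}\|=\nu^\eta_{\underline j}(P)$; integrating over $\underline j$ and using $\int\min(a,f)\,\mathrm{d}\nu\le\min(a,\int f\,\mathrm{d}\nu)$ yields the estimate. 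Then, by Tonelli,
\[
\int h\,\mathrm{d}\nu=\int_0^\infty\sum_{P\in\mathcal{P}}\nu(\{h>t\}\cap P)\,\mathrm{d}t\le\sum_{P\in\mathcal{P}}\Bigl(\nu(P)\log\tfrac1{\nu(P)}+\nu(P)\log N_d+\nu(P)\Bigr)=\mathbf{H}_\nu(\mathcal{P})+\log N_d+1 ,
\]
which is the claim with $C_d=\log N_d+1$, finiteness of $\mathbf{H}_\nu(\mathcal{P})$ being exactly what is needed.

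\emph{Main obstacle.} The genuinely analytic inputs — the Besicovitch differentiation and covering theorems — are available precisely because $\varphi$ takes values in $\mathbb{R}^d$, and it is the covering constant $N_d$ that forces the dependence of $C_d$ on $d$; these steps are short. The delicate part is the measure-theoretic bookkeeping around the disintegration: one must check the chain rule $\mathbb{E}_\nu(\,\cdot\mid\widehat\eta\vee\mathcal{G})=\mathbb{E}_{\nu^\eta_{\underline i}}(\,\cdot\mid\mathcal{G})$, the joint measurability of $\underline j\mapsto\theta_{\underline j},\theta^P_{\underline j}$ and of the sets $E_{t,\underline j}$ (so the Fubini arguments are legitimate — here one can reduce the supremum/infimum over $r$ to rational $r$), and that $\psi_P$ is $\widehat\eta\vee\mathcal{B}_\varphi$-measurable; all routine, but to be handled carefully since $\eta$ need not have atoms of positive measure.
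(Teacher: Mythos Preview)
Your argument is correct and is precisely the Feng--Hu proof: the paper itself does not supply any argument here but simply cites \cite[Proposition~3.5]{FeHu09}, and your sketch (Besicovitch differentiation for the pointwise limit, Besicovitch covering for the weak-type maximal inequality, then the layer-cake integration) is exactly how that proposition is established there.

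One small terminological caution: in this paper ``countable measurable partition'' is defined (Theorem~\ref{Rohlin}) to mean that the quotient is \emph{separated} by countably many measurable sets, i.e.\ $\widehat\eta$ is countably generated; the partition $\eta$ itself may have uncountably many atoms (as in the applications with $\eta=\mathcal{P}_{\pi g}$). So in your $\pi$-system argument you should take $E\in\widehat\eta$ rather than $E\in\eta$; the computation you wrote down goes through verbatim with this change, since $\chi_E$ is then constant on atoms and the disintegration applies as before. Your closing remark about ``atoms of positive measure'' suggests you already sense this, but it is worth stating explicitly.
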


\begin{proof}
This is proved in \cite[Proposition 3.5]{FeHu09}. The bound for $\int_{\Lambda^{\mathbb{N}}} h(\underline{i})$ is contained within the proof.
\end{proof}

\section{Exact-dimensionality}
\label{seced}

In this section we establish the exact-dimensionality of random cascade measures on self-similar sets without any separation condition, as well as of the projections of the measures onto subspaces and of sliced measures for $\xi$ almost all rotations.

Let $\pi\in\Pi_{d,k}$. For $\underline{i} \in  \Lambda^{\mathbb{N}}$ define the {\it fibre}   
\[
[\underline{i}]_{\pi}=(\pi\Phi)^{-1}(\pi\Phi(\underline{i})),
\]
and write $\mathcal{P}_\pi=\{[\underline{i}]_\pi:\underline{i} \in \Lambda^{\mathbb{N}}\}$. It is a measurable partition since the quotient space $ \Lambda^{\mathbb{N}}/\mathcal{P}_\pi $ is separated by $\{(\pi\Phi)^{-1}B_i\}$ where $\{B_i\}$ is the sequence of closed cubes in $\pi(\mathbb{R}^d)$ with rational vertices. Denote by $\widehat{\mathcal{P}_\pi}$ the $\sigma$-algebra generated by $\mathcal{P}_\pi$. Due to Theorem \ref{Rohlin}, given the measurable partition $\mathcal{P}_\pi$, for any probability measure $\nu$ on $(\Lambda^{\mathbb{N}},\mathcal{B})$, for every $\underline{i}$ in a set of full $\nu$-measure, there is a probability measure $\nu^{\mathcal{P}_\pi}_{\underline{i}}$, which we shortly denote by $\nu_{\underline{i},\pi}$, defined on $\mathcal{P}_\pi(\underline{i})=[\underline{i}]_\pi$ such that for any $B\in\mathcal{B}$, the mapping $\underline{i}\mapsto \nu_{\underline{i},\pi}(B)$ is $\widehat{\mathcal{P}_\pi}$-measurable and
\[
\nu(B)=\int_{\Lambda^{\mathbb{N}}} \nu_{\underline{i},\pi}(B) \, \nu(\mathrm{d}\underline{i}).
\]
Furthermore for any $f\in L^1(\Lambda^\mathbb{N},\mathcal{B},\nu)$ we have $\nu_{\underline{i},\pi}(f)=\mathbb{E}_\nu(f|\widehat{\mathcal{P}_\pi})$ for $\nu$-a.e. $\underline{i}$, and $\nu(f)=\int \mathbb{E}_\nu(f|\widehat{\mathcal{P}_\pi}) \, \mathrm{d}\nu$. Moreover, $\nu_{\underline{i},\pi}$  depends only on $[\underline{i}]_\pi$; thus we may write $\nu_{y,\pi}=\nu_{\underline{i},\pi}$ when $y\in \pi(K)$ is such that $\pi\Phi(\underline{i})=y$. By definition for every Borel set $A\in\mathcal{B}$
\begin{equation}
\nu(A)= \int_{\Lambda^{\mathbb{N}}} \nu_{\underline{i},\pi}(A) \, \nu(\mathrm{d}\underline{i})=\int_{y\in \pi (K)} \nu_{y,\pi}(A)\pi \Phi\nu(\mathrm{d}y).\label{disinter}
\end{equation}

The following lemma, which is a variant of properties stated in \cite[Chapter 10]{Mat95}, expresses these conditional measures geometrically as limits of measures of narrow slices.   

\begin{lemma}\label{condimea}
For every set $A\in \mathcal{B}$, for $\pi\Phi\nu$-a.e. $y\in \pi(K)$, 
\[
\nu_{y,\pi}(A)=\lim_{r\to 0} \frac{\nu\left(A\cap \Phi^{-1}\pi^{-1} \left(B(y,r)\right)\right)}{\nu\left(\Phi^{-1} \pi^{-1} \left(B(y,r)\right)\right)},
\]
or equivalently for $\nu$-a.e. $\underline{i}\in\Lambda^\mathbb{N}$,
\[
\nu_{\underline{i},\pi}(A)=\lim_{r\to 0} \frac{\nu\left(A\cap \Phi^{-1}\pi^{-1} \left(B(\pi\Phi(\underline{i}),r)\right)\right)}{\nu\left(\Phi^{-1} \pi^{-1} \left(B(\pi\Phi(\underline{i}),r)\right)\right)}.
\]
\end{lemma}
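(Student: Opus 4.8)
The plan is to deduce the lemma from the martingale/differentiation characterisation of conditional expectations, using the fact that the narrow slices $\Phi^{-1}\pi^{-1}(B(y,r))$ shrink nicely to the fibres $[\underline i]_\pi$ as $r\to 0$. The key observation is that the $\sigma$-algebra $\widehat{\mathcal{P}_\pi}$ is generated (up to $\nu$-null sets) by the countable collection $\{(\pi\Phi)^{-1}B_j\}$ of preimages of rational cubes $B_j$ in $\pi(\mathbb{R}^d)$ that was used to establish measurability of $\mathcal{P}_\pi$; equivalently it is the $\sigma$-algebra $\mathcal{B}_{\pi\Phi}$ of Borel sets pulled back from $\pi(K)$ by the continuous map $\pi\Phi$. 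So by Theorem \ref{Rohlin} the conditional measure satisfies $\nu_{\underline i,\pi}(A)=\mathbb{E}_\nu(\chi_A\mid\mathcal{B}_{\pi\Phi})(\underline i)$ for $\nu$-a.e. $\underline i$, and it therefore suffices to show that this conditional expectation is $\nu$-a.e. the stated limit of ratios.

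The main step is a differentiation-of-measures argument on the target space $\pi(\mathbb{R}^d)\cong\mathbb{R}^k$. Fix $A\in\mathcal{B}$ and define two finite Borel measures on $\pi(K)$ by $\nu_A(E)=\nu(A\cap(\pi\Phi)^{-1}E)$ and $\pi\Phi\nu(E)=\nu((\pi\Phi)^{-1}E)$. Since $\nu_A\ll \pi\Phi\nu$, the Besicovitch differentiation theorem (applicable since balls in $\mathbb{R}^k$ have the Besicovitch covering property) gives
\[
\lim_{r\to 0}\frac{\nu_A(B(y,r))}{\pi\Phi\nu(B(y,r))}=\frac{\mathrm d\nu_A}{\mathrm d(\pi\Phi\nu)}(y)
\qquad\text{for }\pi\Phi\nu\text{-a.e. }y,
\]
which is exactly the first displayed formula once one identifies the left-hand numerator and denominator with $\nu(A\cap\Phi^{-1}\pi^{-1}B(y,r))$ and $\nu(\Phi^{-1}\pi^{-1}B(y,r))$. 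It then remains to check that the Radon–Nikodym derivative $\mathrm d\nu_A/\mathrm d(\pi\Phi\nu)$, viewed as a function of $\underline i$ via $y=\pi\Phi(\underline i)$, agrees $\nu$-a.e. with $\mathbb{E}_\nu(\chi_A\mid\mathcal{B}_{\pi\Phi})$: this is immediate from the defining property $\nu_A(E)=\int_E (\mathrm d\nu_A/\mathrm d(\pi\Phi\nu))\,\mathrm d(\pi\Phi\nu)$ together with the change-of-variables identity $\int_{(\pi\Phi)^{-1}E} g(\pi\Phi(\underline i))\,\nu(\mathrm d\underline i)=\int_E g(y)\,\pi\Phi\nu(\mathrm dy)$ and the fact that $\mathcal{B}_{\pi\Phi}$-measurable functions are exactly those of the form $g\circ\pi\Phi$. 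The equivalence of the two displayed forms of the conclusion is then just the translation between ``$\pi\Phi\nu$-a.e. $y$'' and ``$\nu$-a.e. $\underline i$'' afforded by $y=\pi\Phi(\underline i)$, using once more that $\nu_{\underline i,\pi}$ depends only on $[\underline i]_\pi$.

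The point requiring the most care is the passage from the abstract conditional expectation to a genuine pointwise limit of geometric ratios, i.e. justifying the Besicovitch differentiation step in the right space. One must be sure to differentiate with respect to balls in the $k$-dimensional image space $\pi(\mathbb{R}^d)$ (where the Besicovitch covering theorem is available) rather than with respect to the partition elements directly in $\Lambda^{\mathbb N}$, and to keep track of the (inconsequential) null sets on which either the conditional measure fails to exist, the derivative fails to converge, or $\nu(\Phi^{-1}\pi^{-1}B(y,r))=0$. A minor additional check is that the family $\{[\underline i]_\pi\}$ and the sub-$\sigma$-algebra it generates really do coincide up to $\nu$-null sets with $\mathcal{B}_{\pi\Phi}$, which follows from the separation of $\Lambda^{\mathbb N}/\mathcal{P}_\pi$ by the rational-cube preimages noted just before the lemma. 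Everything else is routine measure theory, and indeed the statement is essentially \cite[Chapter 10]{Mat95} transported to symbolic space via $\Phi$.
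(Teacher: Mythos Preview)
Your proposal is correct and follows essentially the same route as the paper's proof: push the problem to $\pi(\mathbb{R}^d)\cong\mathbb{R}^k$ by defining the measure $\lambda(E)=\nu(A\cap(\pi\Phi)^{-1}E)$, observe $\lambda\ll\pi\Phi\nu$, and apply the differentiation theorem for measures in $\mathbb{R}^k$ (the paper cites \cite[Theorem~2.12]{Mat95}, which amounts to the same Besicovitch-type argument you invoke). The only cosmetic difference is that the paper identifies $\bar f(y)=\nu_{y,\pi}(A)$ with $\mathrm d\lambda/\mathrm d(\pi\Phi\nu)$ directly from the disintegration formula \eqref{disinter}, whereas you route through the conditional-expectation characterisation $\nu_{\underline i,\pi}(A)=\mathbb{E}_\nu(\chi_A\mid\mathcal{B}_{\pi\Phi})$; this is the same step said two ways.
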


\begin{proof}
Let $f:\underline{i}\mapsto \nu_{\underline{i},\pi}(A)$ and $\bar{f}:y\mapsto \nu_{y,\pi}(A)$. By \eqref{disinter},   for any $B\in\mathcal{B}(\pi(\mathbb{R}^d))$,
\begin{equation}\label{dis2}
\nu(A\cap \Phi^{-1}\pi^{-1} B)=\int_{\Phi^{-1}\pi^{-1}B} f \, \mathrm{d}\nu=\int_{B}\bar{f} \, \mathrm{d}\pi\Phi\nu.
\end{equation}
Define a measure $\lambda$ on $\pi(K)$ by $\lambda(B)=\nu(A\cap \Phi^{-1}\pi^{-1} B)$ for $B\in\mathcal{B}(\pi(\mathbb{R}^d))$. By \eqref{dis2} $\lambda$ is absolutely continuous with respect to $\pi\Phi\nu$ with
\[
\frac{\lambda\left(B(y,r)\right)}{\pi\Phi\nu\left(B(y,r)\right)}=\frac{1}{\pi\Phi\nu\left(B(y,r)\right)}\int_{B(y,r)} \bar{f} \, \mathrm{d}\pi\Phi\nu.
\]
Letting $r \to 0$ and applying the differentiation theory of measures, see for example, \cite[Theorem 2.12]{Mat95}, 
\[
\lim_{r\to 0}\frac{\nu\left(A\cap \Phi^{-1}\pi^{-1} \left(B(y,r)\right)\right)}{\nu\left(\Phi^{-1}\pi^{-1} B(y,r)\right)} =\nu_{y,\pi}(A)
\]
for $\pi\Phi\nu$-a.e. $y$, as required.
\end{proof}

 Let $\pi\in \Pi_{d,k}$ be fixed. Here is the main theorem of this section.

\begin{theorem}\label{thmed}
$\mathbb{P}_*$-a.s.,
\begin{itemize}
\item[(i)] $\Phi\mu$ is exact-dimensional with dimension
\[
\alpha=\frac{\mathbb{E}(\mathbf{H}_{\bar{\mu}}(\mathcal{P} \, |\, \mathcal{B}_\Phi))+\sum_{i=1}^m\mathbb{E}(W_i\log W_i)}{\sum_{i=1}^m\mathbb{E}(W_i)\log r_i}.
\]

\item[(ii)]  For $\xi$-a.e. $g\in G$, $\pi g\Phi\mu$ is exact-dimensional with dimension
\[
\beta(\pi)=\frac{\mathbb{E}_{\mathbb{P}^*\times \xi}(\mathbf{H}_{\bar{\mu}}(\mathcal{P} \, |\, \mathcal{B}_{\pi g\Phi}))+\sum_{i=1}^m\mathbb{E}(W_i\log W_i)}{\sum_{i=1}^m\mathbb{E}(W_i)\log r_i}.
\]
\item[(iii)]  For $\xi$-a.e. $g\in G$, for $\pi g\Phi\mu$-a.e. $y\in\pi g(K)$, $\Phi\bar{\mu}_{y,\pi g}$ is exact-dimensional with dimension
\[
\gamma(\pi)=\frac{\mathbb{E}(\mathbf{H}_{\bar{\mu}}(\mathcal{P} \, |\, \mathcal{B}_{\Phi}))-\mathbb{E}_{\mathbb{P}^*\times \xi}(\mathbf{H}_{\bar{\mu}}(\mathcal{P} \, |\,  \mathcal{B}_{\pi g\Phi}))}{\sum_{i=1}^m\mathbb{E}(W_i)\log r_i}.
\]
\end{itemize}
\end{theorem}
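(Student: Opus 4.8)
The plan is, in each case, to show that the relevant local dimension exists and equals the stated constant for $\mathbb{Q}$-almost every $(\underline{i},\omega)$ (for (i)), respectively for $(\mathbb{Q}\times\xi)$-almost every $(\underline{i},\omega,g)$ (for (ii), (iii)), and then to translate this into exact-dimensionality using the identification of ``$\mathbb{Q}$-a.e.'' with ``for $\mathbb{P}_*$-a.e.\ $\mu$, $\mu$-a.e.\ $\underline{i}$'' together with Fubini. Write $\mathcal{P}=\{[i]:i\in\Lambda\}$ for the first-level cylinder partition; for $r>0$ and $\underline{i}\in\Lambda^{\mathbb{N}}$ let $n=n(r,\underline{i})$ be least with $r_{\underline{i}|_n}\le r/(2R)$, so that $\Phi([\underline{i}|_n])=f_{\underline{i}|_n}(K)\subseteq B(\Phi(\underline{i}),r)$ (and $\pi g\Phi([\underline{i}|_n])\subseteq B(\pi g\Phi(\underline{i}),r)$). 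For the denominator, $\log r=\sum_{k=1}^{n}\log r_{i_k}+O(1)$, and since $\log r_{i_1}$ is bounded and $(\Omega',\mathbb{Q},T)$ is ergodic (Lemma~\ref{Ti}, Proposition~\ref{ge}) Birkhoff's theorem gives $\tfrac1n\log r_{\underline{i}|_n}\to\sum_i\mathbb{E}(W_i)\log r_i$, $\mathbb{Q}$-a.s.; likewise on the ergodic skew product $(X,\mathbb{Q}\times\xi,T_\phi)$ of Proposition~\ref{gee}, since the $G$-coordinate does not enter $r_{i_1}$.

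\emph{Part (i).} Set $a_k=\mu\big(\Phi^{-1}B(\Phi(\underline{i}),r)\cap[\underline{i}|_k]\big)$, so $a_0=\mu(\Phi^{-1}B(\Phi(\underline{i}),r))$ and $a_n=\mu([\underline{i}|_n])$ by the choice of $n$, and telescope $-\log a_0=-\log\mu([\underline{i}|_n])-\sum_{k=1}^{n}\big(-\log(a_k/a_{k-1})\big)$. By \eqref{mu}, $-\log\mu([\underline{i}|_n])=-\log Q_{\underline{i}|_n}-\log Y^{[\underline{i}|_n]}$; by \eqref{ss} and \eqref{muij} the first term is the Birkhoff sum $\sum_{k=1}^n w\circ T^{k-1}$ with $w(\underline{i},\omega)=-\log W^{[\emptyset]}_{i_1}(\omega)$ and the second is $-\log(Y\circ T^n)$. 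Assumption (a1) ensures $w,\log Y\in L^1(\mathbb{Q})$ with $\mathbb{E}_\mathbb{Q}(w)=-\sum_i\mathbb{E}(W_i\log W_i)$ (using independence of $W^{[\emptyset]}$ from the deeper generations), so $\tfrac1n(-\log\mu([\underline{i}|_n]))\to-\sum_i\mathbb{E}(W_i\log W_i)$ and $\tfrac1n\log(Y\circ T^n)\to0$, $\mathbb{Q}$-a.s. Again by \eqref{ss}, \eqref{muij}, $-\log(a_k/a_{k-1})=g_{\epsilon_k}\!\big(T^{k-1}(\underline{i},\omega)\big)$ with $\epsilon_k=r/r_{\underline{i}|_{k-1}}$ and
\[
g_\epsilon(\underline{j},\omega')=-\log\frac{\mu\big(\Phi^{-1}B(\Phi(\underline{j}),\epsilon)\cap[j_1],\omega'\big)}{\mu\big(\Phi^{-1}B(\Phi(\underline{j}),\epsilon),\omega'\big)}.
\]
Proposition~\ref{fenghu}, with the trivial $\eta$, this $\mathcal{P}$, and $\varphi=\Phi$ (for each fixed realization), yields $g_\epsilon\to G:=\mathbf{I}_\mu(\mathcal{P}\,|\,\mathcal{B}_\Phi)$ pointwise a.e.\ as $\epsilon\to0$ together with the domination $h:=\sup_{\epsilon>0}g_\epsilon\in L^1(\mathbb{Q})$ (its bound $\int h\,\mathrm{d}\bar\mu\le\mathbf{H}_{\bar\mu}(\mathcal{P})+C_d\le\log m+C_d$ integrates against $\mathbb{P}^*$ since $\mathbb{E}\|\mu\|=1$). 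Granting $\tfrac1n\sum_{k=1}^n g_{\epsilon_k}\circ T^{k-1}\to\mathbb{E}_\mathbb{Q}(G)$ (see below), dividing the telescoped identity by $\log r$ gives
\[
D_{\Phi\mu}(\Phi(\underline{i}))=\lim_{r\to0}\frac{\log a_0}{\log r}=\frac{\mathbb{E}_\mathbb{Q}(G)+\sum_i\mathbb{E}(W_i\log W_i)}{\sum_i\mathbb{E}(W_i)\log r_i}=\alpha
\]
for $\mathbb{Q}$-a.e.\ $(\underline{i},\omega)$, with $\mathbb{E}_\mathbb{Q}(G)$ the entropy term $\mathbb{E}(\mathbf{H}_{\bar\mu}(\mathcal{P}\,|\,\mathcal{B}_\Phi))$ of $\alpha$ by the definition \eqref{peydef} of the Peyri\`ere measure; this is (i).

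\emph{Parts (ii), (iii).} For (ii) I would rerun this argument on $(X,\mathbb{Q}\times\xi,T_\phi)$ with $\pi g\Phi$ in place of $\Phi$: by \eqref{ss} the $G$-coordinate evolves as $g\mapsto gO_{\underline{i}|_1}=g\phi(\underline{i},\omega)$, matching $T_\phi$, so the per-step integrand becomes $\mathbf{I}_\mu(\mathcal{P}\,|\,\mathcal{B}_{\pi g\Phi})$ along the $T_\phi$-orbit while $-\log\mu([\underline{i}|_n])$ is unchanged, and Birkhoff gives $D_{\pi g\Phi\mu}=\beta(\pi)$ for $(\mathbb{Q}\times\xi)$-a.e.\ point; Fubini then yields (ii). For (iii) I would instead apply Proposition~\ref{fenghu} with $\eta=\mathcal{P}_{\pi g}$ (the fibre partition of $\pi g\Phi$) and $\varphi=\Phi$: since $\mathcal{B}_{\pi g\Phi}\subseteq\mathcal{B}_\Phi$ we have $\widehat{\mathcal{P}_{\pi g}}\vee\mathcal{B}_\Phi=\mathcal{B}_\Phi$, so the per-step integrand is again $\mathbf{I}_\mu(\mathcal{P}\,|\,\mathcal{B}_\Phi)$, whereas $-\log\mu([\underline{i}|_n])$ is replaced by $-\log\bar\mu_{y,\pi g}([\underline{i}|_n])=\mathbf{I}_\mu\big(\{[j]:j\in\Lambda^n\}\,\big|\,\mathcal{B}_{\pi g\Phi}\big)(\underline{i})$, which by the chain rule for conditional information and the self-similarity of the conditional measures $\bar\mu_{y,\pi g}$ under the rotated projections (cf.\ \eqref{rescal}, \eqref{disinter} and Lemma~\ref{condimea}) is a Birkhoff sum over $T_\phi$ with average $\mathbb{E}_{\mathbb{P}^*\times\xi}(\mathbf{H}_{\bar\mu}(\mathcal{P}\,|\,\mathcal{B}_{\pi g\Phi}))$. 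Dividing through gives $D_{\Phi\bar\mu_{y,\pi g}}=\gamma(\pi)=\alpha-\beta(\pi)$ for $\xi$-a.e.\ $g$ and $\pi g\Phi\mu$-a.e.\ $y$, which is (iii) and expresses the dimension-conservation identity $\alpha=\beta(\pi)+\gamma(\pi)$.

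\emph{The main obstacle} is deducing $\tfrac1n\sum_{k=1}^n g_{\epsilon_k}\circ T^{k-1}\to\mathbb{E}_\mathbb{Q}(G)$, since the scales $\epsilon_k=r/r_{\underline{i}|_{k-1}}$ are not constant and $g_{\epsilon_k}\circ T^{k-1}$ is not of the form (fixed integrand)$\,\circ T^{k-1}$. I would split at a scale $\delta>0$: because $\epsilon_k\le 2R\rho^{\,n-k}$, all but the last $N_\delta=O(\log(1/\delta))$ indices have $\epsilon_k<\delta$, and for these $\big|g_{\epsilon_k}\circ T^{k-1}-G\circ T^{k-1}\big|\le g^\delta\circ T^{k-1}$, where $g^\delta:=\sup_{0<\epsilon\le\delta}|g_\epsilon-G|$ satisfies $g^\delta\le 2h\in L^1(\mathbb{Q})$ and $g^\delta\downarrow0$ a.e.\ by Proposition~\ref{fenghu}; applying Birkhoff to $G$, to $g^\delta$ and to $h$ (the last $N_\delta$ terms of a Birkhoff sum being $o(1)$ for fixed $N_\delta$) gives $\limsup_n\big|\tfrac1n\sum_{k=1}^n g_{\epsilon_k}\circ T^{k-1}-\mathbb{E}_\mathbb{Q}(G)\big|\le\mathbb{E}_\mathbb{Q}(g^\delta)$, and letting $\delta\to0$ along a sequence, with dominated convergence, closes it; the same device handles (ii) and (iii) on $(X,\mathbb{Q}\times\xi,T_\phi)$. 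The residual verifications---that $w,\log Y,h$ and their $T_\phi$-analogues lie in $L^1$ (which reduces to assumption (a1) and $\mathbf{H}_{\bar\mu}(\mathcal{P})\le\log m$), that $\tfrac1n\log(Y\circ T^n)\to0$, and the identification of the slices in (iii) through Lemma~\ref{condimea}---are routine.
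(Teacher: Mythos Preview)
Your proposal is correct and follows essentially the same route as the paper. The telescoping of $\log\mu(\Phi^{-1}B(\Phi(\underline{i}),r))$ along successive cylinders, the application of Proposition~\ref{fenghu} (with trivial $\eta$ for (i),(ii) and $\eta=\mathcal{P}_{\pi g}$ for (iii)), and the use of ergodicity of $(\Omega',\mathbb{Q},T)$ and of the group extension $(X,\mathbb{Q}\times\xi,T_\phi)$ are exactly the paper's strategy. Two presentational differences are worth noting. First, where you handle the varying scales $\epsilon_k$ by the ``split at $\delta$'' argument, the paper simply invokes Maker's ergodic theorem (stated there as Theorem~\ref{etM}), which packages precisely the limit $\tfrac1n\sum_{k=0}^{n-1}f_{n-k}\circ T^k\to\mathbb{E}_{\mathbb{Q}}(f)$ you need; your argument is in effect a proof of Maker's theorem. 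Second, in part~(iii) the ``self-similarity of the conditional measures $\bar\mu_{y,\pi g}$ under the rotated projections'' that you invoke is the one non-obvious step: the paper establishes it explicitly by expressing $\bar\mu_{\underline{i},\pi g}(B_\Phi(\underline{i},k)\cap\mathcal{P}(\underline{i}))$ as a limit via Lemma~\ref{condimea}, applying Lemmas~\ref{bphi} and~\ref{bpiphi}, and obtaining the recursion
\[
\bar\mu_{\underline{i},\pi g}(B_\Phi(\underline{i},k)\cap\mathcal{P}(\underline{i}))
=\bar\mu^{[\underline{i}|_1]}_{\sigma\underline{i},\,\pi gO_{\underline{i}|_1}}(B_\Phi(\sigma\underline{i},k-1))\cdot
\exp\bigl(-\mathbf{I}_{\bar\mu}(\mathcal{P}\,|\,\mathcal{B}_{\pi g\Phi})(\underline{i})\bigr),
\]
which is what makes $-\log\bar\mu_{y,\pi g}([\underline{i}|_n])$ a genuine $T_\phi$-Birkhoff sum. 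Your chain-rule formulation is equivalent, but the justification that each summand $\mathbf{I}_\mu(\sigma^{-k}\mathcal{P}\,|\,\mathcal{P}_k\vee\mathcal{B}_{\pi g\Phi})$ equals $\mathbf{I}_{\mu^{[\underline{i}|_k]}}(\mathcal{P}\,|\,\mathcal{B}_{\pi gO_{\underline{i}|_k}\Phi})\circ\sigma^k$ requires exactly that calculation.
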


`Dimension conservation' for $\xi$-almost all rotations now follows. 
\begin{corollary}\label{dim-con}
$\mathbb{P}_*$-a.s. for $\xi$-a.e. $g\in G$ and $\pi g\Phi\mu$-a.e. $y\in\pi g(K)$,
\[
\dim_H \pi g\Phi\bar{\mu}+\dim_H \Phi\bar{\mu}_{y,\pi g}=\dim_H \Phi\bar{\mu}.
\]
\end{corollary}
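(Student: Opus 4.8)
The plan is to read the identity off directly from the three dimension formulas of Theorem \ref{thmed}, the only preliminary work being to check that replacing $\mu$ by its normalisation $\bar\mu$ is harmless.

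First I would observe that on the event $\{\|\mu\|>0\}$ one has $\bar\mu=\|\mu\|^{-1}\mu$ with $0<\|\mu\|<\infty$, so $\Phi\bar\mu$ and $\Phi\mu$ have the same null sets and the same pointwise local dimensions (the constant factor contributes $\log\|\mu\|/\log r\to 0$ as $r\to 0$); the same holds for $\pi g\Phi\bar\mu$ versus $\pi g\Phi\mu$, while $\Phi\bar\mu_{y,\pi g}$ is already a probability measure. Using this together with the fact recorded in Section \ref{secde} that an exact-dimensional measure has Hausdorff dimension equal to its exact dimension, Theorem \ref{thmed}(i) gives $\dim_H\Phi\bar\mu=\alpha$, $\mathbb{P}_*$-a.s.; part (ii) gives $\dim_H\pi g\Phi\bar\mu=\beta(\pi)$ for $\xi$-a.e. $g$, $\mathbb{P}_*$-a.s.; and part (iii) gives $\dim_H\Phi\bar\mu_{y,\pi g}=\gamma(\pi)$ for $\xi$-a.e. $g$ and $\pi g\Phi\mu$-a.e. $y$, $\mathbb{P}_*$-a.s.

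Next I would fix a $\mathbb{P}_*$-full event, intersect the two $\xi$-conull sets of rotations supplied by (ii) and (iii), and then restrict to the $\pi g\Phi\mu$-conull set of $y$ from (iii). On this event all three formulas hold simultaneously, so the conclusion reduces to the arithmetic identity $\beta(\pi)+\gamma(\pi)=\alpha$, which is immediate from the explicit expressions in Theorem \ref{thmed}: the two copies of $\mathbb{E}_{\mathbb{P}^*\times\xi}(\mathbf{H}_{\bar\mu}(\mathcal{P}\,|\,\mathcal{B}_{\pi g\Phi}))$ cancel, leaving
\[
\beta(\pi)+\gamma(\pi)=\frac{\mathbb{E}(\mathbf{H}_{\bar\mu}(\mathcal{P}\,|\,\mathcal{B}_\Phi))+\sum_{i=1}^m\mathbb{E}(W_i\log W_i)}{\sum_{i=1}^m\mathbb{E}(W_i)\log r_i}=\alpha.
\]

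There is no genuine obstacle here — all the substance is contained in Theorem \ref{thmed}, and what remains is bookkeeping. The only points needing a little care are that the nesting of the "a.s." quantifiers in the corollary matches the way Theorem \ref{thmed}(iii) is phrased, and that the $\xi$-null exceptional set appearing in part (ii) can be chosen to depend measurably on the sample point while remaining $\mathbb{P}_*$-a.s. conull; both are already built into the proof of Theorem \ref{thmed} through the Fubini- and Rohlin-type arguments of Section \ref{seced}.
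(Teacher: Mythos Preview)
Your proposal is correct and follows essentially the same route as the paper's proof, which is the one-liner ``It follows from Theorem \ref{thmed} that these measures are exact-dimensional and $\alpha=\beta(\pi)+\gamma(\pi)$.'' You have simply supplied the bookkeeping details (the harmlessness of normalisation, the intersection of the $\xi$-conull sets, and the explicit cancellation in the formulas) that the paper leaves implicit.
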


\begin{proof}
If follows from Theorem \ref{thmed} that these measures are exact-dimensional and $\alpha=\beta(\pi)+\gamma(\pi)$.
\end{proof}

We immediately get the following corollary.

\begin{corollary}\label{dim-con2}
If $G$ is finite then for every projection $\pi\in\Pi_{d,k}$, almost surely 
\begin{equation}
\dim_H \pi \Phi\bar{\mu} +\dim_H \Phi\bar{\mu} _{y,\pi}=\dim_H \Phi\bar{\mu} \quad \mbox{ for } \pi \Phi\bar{\mu}\mbox{ a.e. } y\in\pi (K),\label{corcor}
\end{equation}
that is $\pi$ is dimension conserving. In particular, if $\bar{\mu} $ 
is deterministic (i.e. a self-similar measure), then  (\ref{corcor}) holds for all $\pi\in\Pi_{d,k}$.
\end{corollary}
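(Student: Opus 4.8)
The plan is to read the conclusion off from Corollary~\ref{dim-con} by exploiting the fact that when $G$ is a finite group its normalised Haar measure $\xi$ is simply the uniform probability measure on $G$, so the only $\xi$-null subset of $G$ is the empty set. Consequently ``for $\xi$-a.e.\ $g\in G$'' upgrades to ``for every $g\in G$'', and in particular the identity element $e$ of $SO(d,\mathbb{R})$, which necessarily lies in $G$, is an admissible choice of $g$.

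First I would fix an arbitrary $\pi\in\Pi_{d,k}$. Since Theorem~\ref{thmed}, and hence Corollary~\ref{dim-con}, is stated for a fixed $\pi$, Corollary~\ref{dim-con} gives that $\mathbb{P}_*$-a.s., for every $g\in G$ (using finiteness of $G$) and for $\pi g\Phi\mu$-a.e.\ $y\in\pi g(K)$,
\[
\dim_H \pi g\Phi\bar{\mu}+\dim_H \Phi\bar{\mu}_{y,\pi g}=\dim_H \Phi\bar{\mu}.
\]
Taking $g=e$, and noting that $\pi e=\pi$ and that $\pi\Phi\mu$ and $\pi\Phi\bar{\mu}$ have the same null sets, yields exactly \eqref{corcor} for this $\pi$, $\mathbb{P}_*$-almost surely. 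Since $\pi$ was arbitrary, this establishes the first assertion of the corollary.

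For the deterministic case, suppose $\bar{\mu}$ is a non-random self-similar measure. Then for each fixed $\pi$ the event in \eqref{corcor} — that $\pi$ is dimension conserving for $\Phi\bar{\mu}$ — does not depend on $\omega$, so it is either the whole probability space or the empty set. We have just shown it has full $\mathbb{P}_*$-probability, hence it is the whole space; that is, \eqref{corcor} holds with certainty for this $\pi$. As this argument applies to every $\pi\in\Pi_{d,k}$, dimension conservation holds simultaneously for all $\pi\in\Pi_{d,k}$.

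The only point needing care is the interchange of the quantifiers ``for all $\pi$'' and ``almost surely''. In the genuinely random setting the $\mathbb{P}_*$-null exceptional set coming from Corollary~\ref{dim-con} may depend on $\pi$, and an uncountable union of null sets need not be null, which is precisely why \eqref{corcor} is stated for each fixed $\pi$ rather than for all $\pi$ at once. In the deterministic case this obstruction simply disappears, because each such exceptional set is trivial. Beyond this bookkeeping there is no analytic obstacle here: the substantive content — exact-dimensionality of the projections and sliced measures together with the additivity $\alpha=\beta(\pi)+\gamma(\pi)$ — is already supplied by Theorem~\ref{thmed}.
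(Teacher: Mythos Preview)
Your argument is correct and is exactly the intended one: the paper presents this corollary as immediate from Corollary~\ref{dim-con} without further proof, and your proposal simply spells out why---finiteness of $G$ makes the Haar measure uniform so that ``$\xi$-a.e.\ $g$'' becomes ``every $g$'', allowing the choice $g=e$, and in the deterministic case the $\omega$-independence of the event upgrades ``almost surely for each $\pi$'' to ``for all $\pi$''.
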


\subsection{Proof of Theorem \ref{thmed}(i)}

The proof is adapted from \cite{FeHu09}. Recall that $R=\max\{|x|:x\in K\}$. For $n\ge 0$ and $\underline{i}\in\Lambda^{\mathbb{N}}$ let
\[
B_\Phi(\underline{i},n)=\Phi^{-1}\left(B(\Phi(\underline{i}),R\cdot r_{\underline{i}|_n})\right),
\]
with the convention that $r_{\emptyset}=1$. By definition we have $B_\Phi(\underline{i},0)=\Lambda^{\mathbb{N}}$ for all $\underline{i}\in\Lambda^\mathbb{N}$.

For $n\ge 1$ let
\[
f_n: \Lambda^{\mathbb{N}} \times \Omega_* \ni (\underline{i},\omega) \mapsto -\log \frac{\bar{\mu}\left(B_\Phi(\underline{i},n)\cap \mathcal{P}(\underline{i})\right)}{\bar{\mu}\left(B_\Phi(\underline{i},n)\right)} \in\mathbb{R}.
\]
Applying Proposition \ref{fenghu} in the case of $\eta=\mathcal{N}$ and $\varphi=\Phi$ we have that given any $\omega\in\Omega^*$ such that $\|\mu\|>0$, for $\mu$-a.e. $\underline{i}\in\Lambda^{\mathbb{N}}$,
\[
\lim_{n\to \infty} f_n(\underline{i},\omega) =\mathbf{I}_{\bar{\mu}}(\mathcal{P} \,|\,\mathcal{B}_\Phi)(\underline{i}):=f(\underline{i},\omega).
\]
Furthermore, as $\mathcal{P}$ is a finite partition of $m$ elements and $\bar{\mu}$ is a probability measure, setting
\[
\bar{f}(\underline{i},\omega)=-\inf_{n\ge 1}\log \frac{\bar{\mu}\left(B_\Phi(\underline{i},n)\cap \mathcal{P}(\underline{i})\right)}{\bar{\mu}\left(B_\Phi(\underline{i},n)\right)},
\]
we have
\[
\int_{\Lambda^{\mathbb{N}}} \bar{f}(\underline{i},\omega)\, \bar{\mu}(\mathrm{d}\underline{i},\omega) \le \mathbf{H}_{\bar{\mu}}(\mathcal{P})+C_d\le \log m+C_d.
\]
This implies that $\bar{f}\in L^1(\mathbb{Q})$.

\medskip

Next we apply the following ergodic theorem due to Maker \cite{Maker40}.

\begin{theorem}\label{etM}
Let $(X,\mathcal{B},\mu,T)$ be a measure-preserving system and let  $\{f_n\}$ be integrable functions on $(X,\mathcal{B},\mu)$. If $f_n(x)\to f(x)$ a.e. and if $\sup_n |f_n(x)|=\bar{f}(x)$ is integrable, then for a.e. $x$,
\[
\lim \frac{1}{n} \sum_{k=0}^{n-1} f_{n-k}\circ T^k(x) =f_\infty(x),
\]
where $f_\infty(x)=\lim \frac{1}{n}\sum_{k=0}^{n-1} f\circ T^k(x)$.
\end{theorem}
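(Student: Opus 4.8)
The plan is to reduce everything to Birkhoff's pointwise ergodic theorem, applied to the three \emph{fixed} integrable functions $f$, $\bar{f}$, and the monotone family of tail suprema $g_N := \sup_{n\ge N}|f_n-f|$. Write $\mathcal{I}$ for the $\sigma$-algebra of $T$-invariant sets and, for $h\in L^1$, $A_n h := \frac{1}{n}\sum_{k=0}^{n-1} h\circ T^k$; Birkhoff gives $A_n h \to \mathbb{E}(h\mid\mathcal{I})$ a.e.\ and in $L^1$. First note that $f=\lim_n f_n$ satisfies $|f|\le\bar f$, hence $f\in L^1$, so $f_\infty=\mathbb{E}(f\mid\mathcal{I})=\lim_n A_n f$ exists a.e.; it therefore suffices to prove that
\[
D_n(x):=\frac{1}{n}\sum_{k=0}^{n-1} f_{n-k}\circ T^k(x)\;-\;A_n f(x)
\]
tends to $0$ for a.e.\ $x$.

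Next I would set up the comparison. Since $0\le g_N\le 2\bar f\in L^1$, the $g_N$ are integrable, decrease in $N$, and $g_N\downarrow \limsup_n|f_n-f|=0$ a.e.; by dominated convergence $\int g_N\to 0$. By Birkhoff, $A_n g_N\to G_N:=\mathbb{E}(g_N\mid\mathcal{I})$ a.e., and the $G_N$ are nonnegative, decreasing in $N$, with $\int G_N=\int g_N\to 0$, so $G_N\downarrow 0$ a.e.\ by monotone convergence. For $N\le n$ split according to whether $n-k\ge N$ (so $|f_{n-k}-f|\le g_N$) or not (the last $N-1$ terms, where $|f_{n-k}-f|\le 2\bar f$):
\[
|D_n|\;\le\;\frac{1}{n}\sum_{k=0}^{n-1}|f_{n-k}-f|\circ T^k\;\le\; A_n g_N \;+\; \frac{1}{n}\sum_{k=n-N+1}^{n-1} 2\,\bar f\circ T^k .
\]
The first term converges a.e.\ to $G_N$. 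For the boundary term I would invoke the standard consequence of Birkhoff that $\frac{1}{m}\bar f\circ T^m\to 0$ a.e., which follows from $\frac{\bar f\circ T^m}{m}=\frac{m+1}{m}A_{m+1}\bar f-A_m\bar f\to 0$; for fixed $N$ the boundary sum is then a sum of $N-1$ terms $\frac{1}{n}\bar f\circ T^{\,n-j}=\frac{n-j}{n}\cdot\frac{1}{n-j}\bar f\circ T^{\,n-j}$, $1\le j\le N-1$, each tending to $0$ a.e., hence the whole boundary term tends to $0$ a.e.\ for each fixed $N$. Combining, for a.e.\ $x$ and every $N$ we get $\limsup_n|D_n(x)|\le G_N(x)$; letting $N\to\infty$ and using $G_N(x)\to 0$ gives $D_n(x)\to 0$ a.e., and with $A_n f\to f_\infty=\mathbb{E}(f\mid\mathcal{I})$ this is the claim.

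\textbf{Main obstacle.} The delicate point is passing from the $L^1$-smallness of $g_N$ to a pointwise conclusion, and in particular interchanging the two limits $n\to\infty$ and $N\to\infty$ correctly: one needs not just $\int g_N\to 0$ but the a.e.\ existence of $\lim_n A_n g_N = G_N$ together with $G_N\downarrow 0$ a.e., which is precisely why the argument is organized around the monotone sequences $(g_N)$ and $(G_N)$ rather than around a single $\varepsilon$. The control of the $O(N)$ "unstabilised" terms at the right end of the Birkhoff sum is routine once one records $\frac{1}{m}\bar f\circ T^m\to 0$ a.e.
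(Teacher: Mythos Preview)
The paper does not prove this statement at all; it is stated as a known ergodic theorem due to Maker and simply cited as \cite{Maker40}. Your proof is correct and is essentially the standard argument: reduce to Birkhoff applied to $f$, control the defect $D_n$ via the decreasing tail suprema $g_N=\sup_{n\ge N}|f_n-f|\le 2\bar f$, apply Birkhoff to each $g_N$ to get a.e.\ limits $G_N=\mathbb{E}(g_N\mid\mathcal I)$, dispose of the $N-1$ boundary terms using $\tfrac{1}{m}\bar f\circ T^m\to 0$ a.e.\ (your telescoping identity for this is correct), and then let $N\to\infty$ using $G_N\downarrow 0$ a.e.\ from $\int G_N=\int g_N\to 0$. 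The interchange-of-limits issue you single out as the main obstacle is indeed the only substantive point, and your handling of it via the monotone family $(G_N)$ is the right one.
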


\begin{lemma}\label{fnf}
$\mathbb{P}_*$-a.s. for $\mu$-a.e. $\underline{i}\in\Lambda^{\mathbb{N}}$,
\[
\lim_{n\to\infty}-\frac{1}{n}\log \prod_{k=0}^{n-1}\frac{\bar{\mu}^{[\underline{i}|_{k}]}\left(B_\Phi(\sigma^k\underline{i},n-k)\cap \mathcal{P}(\sigma^{k}\underline{i})\right)}{\bar{\mu}^{[\underline{i}|_k]}\left( B_\Phi(\sigma^k\underline{i},n-k)\right)}=\mathbb{E}(\mathbf{H}_{\bar{\mu}}(\mathcal{P} \, |\, \mathcal{B}_\Phi)).
\] 
\end{lemma}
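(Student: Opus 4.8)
The plan is to recognise the product inside the logarithm as a Birkhoff-type sum for the skew product $(\Omega',\mathcal{F}',\mathbb{Q},T)$ and then apply Maker's ergodic theorem (Theorem~\ref{etM}) together with the ergodicity coming from Proposition~\ref{ge}. First I would rewrite the general factor. Using the statistical self-similarity relation \eqref{rescal}, namely $\bar{\mu}_{[\underline{i}|_k]}\circ\sigma^{-k}=\chi_{\{Q_{\underline{i}|_k}>0\}}\bar{\mu}^{[\underline{i}|_k]}$, and the fact that $\Phi\circ\sigma^k$ on the cylinder $[\underline{i}|_k]$ agrees (up to the similarity $f_{\underline{i}|_k}$, which is a homothety composed with a rotation and hence maps balls to balls) with the rescaled copy of $\Phi$, one checks that
\[
\frac{\bar{\mu}^{[\underline{i}|_k]}\bigl(B_\Phi(\sigma^k\underline{i},n-k)\cap\mathcal{P}(\sigma^k\underline{i})\bigr)}{\bar{\mu}^{[\underline{i}|_k]}\bigl(B_\Phi(\sigma^k\underline{i},n-k)\bigr)}
= \exp\bigl(-f_{n-k}(T^k(\underline{i},\omega))\bigr),
\]
where $f_n$ is exactly the function defined just before the lemma and $T$ is the skew product $T(\underline{i},\omega)=(\sigma\underline{i},\eta_{\underline{i}|_1}\omega)$; here one uses \eqref{muij} to identify $\bar{\mu}^{[\underline{i}|_k]}(\cdot,\omega)$ with $\bar{\mu}(\cdot,\eta_{\underline{i}|_k}\omega)=\bar{\mu}$ evaluated at the shifted point of $\Omega^*$, and the radius $R\cdot r_{(\sigma^k\underline{i})|_{n-k}}$ appearing in $B_\Phi(\sigma^k\underline{i},n-k)$ is precisely what makes the ball in the rescaled system correspond to the original ball $B(\Phi(\underline{i}),R\,r_{\underline{i}|_n})$. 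Consequently
\[
-\frac{1}{n}\log\prod_{k=0}^{n-1}\frac{\bar{\mu}^{[\underline{i}|_k]}\bigl(B_\Phi(\sigma^k\underline{i},n-k)\cap\mathcal{P}(\sigma^k\underline{i})\bigr)}{\bar{\mu}^{[\underline{i}|_k]}\bigl(B_\Phi(\sigma^k\underline{i},n-k)\bigr)}
=\frac{1}{n}\sum_{k=0}^{n-1}f_{n-k}\bigl(T^k(\underline{i},\omega)\bigr).
\]

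Next I would invoke Maker's theorem. We have already established, via Proposition~\ref{fenghu} with $\eta=\mathcal{N}$ and $\varphi=\Phi$, that $f_n\to f:=\mathbf{I}_{\bar\mu}(\mathcal{P}\,|\,\mathcal{B}_\Phi)$ pointwise $\mathbb{Q}$-a.e., and that $\sup_n|f_n|\le\bar f\in L^1(\mathbb{Q})$ (the paragraph preceding Theorem~\ref{etM} does this, using $\mathbf{H}_{\bar\mu}(\mathcal{P})\le\log m<\infty$). Since $\mathbb{Q}$ is $T$-invariant (Lemma~\ref{Ti}), Maker's theorem applies and gives, for $\mathbb{Q}$-a.e. $(\underline{i},\omega)$,
\[
\lim_{n\to\infty}\frac{1}{n}\sum_{k=0}^{n-1}f_{n-k}\bigl(T^k(\underline{i},\omega)\bigr)
=\lim_{n\to\infty}\frac{1}{n}\sum_{k=0}^{n-1}f\bigl(T^k(\underline{i},\omega)\bigr).
\]
By Proposition~\ref{ge} the system $(\Omega',\mathcal{F}',\mathbb{Q},T)$ is mixing, hence ergodic, so the right-hand side equals $\mathbb{E}_{\mathbb{Q}}(f)$ by Birkhoff's theorem. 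Finally, unwinding the definition of the Peyri\`ere measure \eqref{peydef}, $\mathbb{E}_{\mathbb{Q}}(\mathbf{I}_{\bar\mu}(\mathcal{P}\,|\,\mathcal{B}_\Phi))=\int_{\Omega^*}\int_{\Lambda^\mathbb{N}}\mathbf{I}_{\bar\mu}(\mathcal{P}\,|\,\mathcal{B}_\Phi)\,\mathrm{d}\bar\mu\,\mathrm{d}\mathbb{P}^*=\mathbb{E}\bigl(\mathbf{H}_{\bar\mu}(\mathcal{P}\,|\,\mathcal{B}_\Phi)\bigr)$, which is the claimed limit. Translating "$\mathbb{Q}$-a.e." into "$\mathbb{P}_*$-a.s., $\mu$-a.e. $\underline{i}$" via the remark after \eqref{peydef} finishes the proof.

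The main obstacle is the bookkeeping in the first paragraph: one must verify carefully that the telescoping identity for the fibre partition and the ball preimages is exact, i.e. that conditioning the rescaled measure $\bar\mu^{[\underline{i}|_k]}$ on $\mathcal{P}(\sigma^k\underline{i})$ and intersecting with $B_\Phi(\sigma^k\underline{i},n-k)$ really does reproduce the $k$-th factor of $f_{n-k}\circ T^k$. This hinges on (a) the compatibility $f_{\underline{i}|_k}\circ(\Phi\circ\sigma^k)=\Phi$ on $[\underline{i}|_k]$, so that $f_{\underline{i}|_k}$ carries $B(\Phi(\sigma^k\underline{i}),R\,r_{(\sigma^k\underline{i})|_{n-k}})$ onto $B(\Phi(\underline{i}),R\,r_{\underline{i}|_n})$ (using $r_{\underline{i}|_n}=r_{\underline{i}|_k}\,r_{(\sigma^k\underline{i})|_{n-k}}$ and that $f_{\underline{i}|_k}$ scales Euclidean distances by $r_{\underline{i}|_k}$); and (b) the fact that the partition $\mathcal{P}$ (cylinders of length one) transforms correctly, namely $\sigma^{-k}$ maps $\mathcal{P}(\sigma^k\underline{i})$ to the appropriate coordinate restriction. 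Once this identification is nailed down, the rest is a routine application of the ergodic theorems and the definition of $\mathbb{Q}$.
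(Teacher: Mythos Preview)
Your proposal is correct and follows essentially the same route as the paper: identify each factor as $f_{n-k}\circ T^k(\underline{i},\omega)$, apply Maker's theorem, and use the ergodicity of $(\Omega',\mathcal{F}',\mathbb{Q},T)$ from Proposition~\ref{ge} to identify the limit as $\mathbb{E}_\mathbb{Q}(f)=\mathbb{E}(\mathbf{H}_{\bar{\mu}}(\mathcal{P}\,|\,\mathcal{B}_\Phi))$. One remark: the identification $-\log\frac{\bar{\mu}^{[\underline{i}|_k]}(B_\Phi(\sigma^k\underline{i},n-k)\cap\mathcal{P}(\sigma^k\underline{i}))}{\bar{\mu}^{[\underline{i}|_k]}(B_\Phi(\sigma^k\underline{i},n-k))}=f_{n-k}\circ T^k(\underline{i},\omega)$ is actually a purely formal consequence of the definitions of $f_n$, $T$, and relation~\eqref{muij} (which gives $\bar{\mu}(\cdot,\eta_{\underline{i}|_k}\omega)=\bar{\mu}^{[\underline{i}|_k]}(\cdot,\omega)$); the geometric discussion of how $f_{\underline{i}|_k}$ maps balls and how the radii match up is not needed here---that reasoning belongs to Lemma~\ref{bphi} and the telescoping computation~\eqref{eqm1} that come afterward.
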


\begin{proof}
First notice that $\displaystyle -\log \frac{\bar{\mu}^{[\underline{i}|_{k}]}\left(B_\Phi(\sigma^k\underline{i},n-k)\cap \mathcal{P}(\sigma^{k}\underline{i})\right)}{\bar{\mu}^{[\underline{i}|_k]}\left( B_\Phi(\sigma^k\underline{i},n-k)\right)}= f_{n-k}\circ T^k(\underline{i},\omega)$. From Theorem \ref{etM}, for $\mathbb{Q}$-a.e. $(\underline{i},\omega)$,
\[
\lim_{n\to\infty} \frac{1}{n}\sum_{k=0}^{n-1} f_{n-k}\circ T^k(\underline{i},\omega)=f_\infty(\underline{i},\omega),
\]
where $f_\infty(\underline{i},\omega)=\lim_{n\to\infty}\frac{1}{n}\sum_{k=0}^{n-1} f\circ T^k(\underline{i},\omega)$.  But for $\mathbb{Q}$-a.e. $(\underline{i},\omega)\in \Omega'$, $f_\infty(\underline{i},\omega)=\mathbb{E}_\mathbb{Q}(f)=\mathbb{E}(\mathbf{H}_{\bar{\mu}}(\mathcal{P} \, |\, \mathcal{B}_\Phi))$  by Proposition \ref{ge}, hence the conclusion.
\end{proof}

The next lemma, an analogue of  \cite[Lemma 5.3]{FeHu09} for self-similar sets, relates the shift on symbolic space to its geometric effect on balls in $\mathbb{R}^d$.

\begin{lemma}\label{bphi}
For $\underline{i}\in\Lambda^{\mathbb{N}}$ and $r>0$ we have
\[
\Phi^{-1}\left(B(\Phi(\underline{i}),r_{\underline{i}|_1}\! \cdot r)\right)\cap \mathcal{P}(\underline{i})=\sigma^{-1} \Phi^{-1}\left(B(\Phi(\sigma\underline{i}),r)\right)\cap \mathcal{P}(\underline{i}).
\]
\end{lemma}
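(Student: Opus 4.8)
The plan is to reduce membership in each of the two sets to one and the same explicit metric condition on $\sigma\underline{i}$, using only the similarity structure of the $f_j$'s; recall here that $\mathcal{P}$ is the length-one cylinder partition, so $\mathcal{P}(\underline{i})=[\,\underline{i}|_1\,]$.

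First I would record the basic semi-conjugacy property of $\Phi$: for every $j\in\Lambda$ and $\underline{k}\in\Lambda^{\mathbb{N}}$ one has $\Phi(j\underline{k})=f_j(\Phi(\underline{k}))$. This is immediate from the definition $\Phi(\underline{i})=\lim_{n\to\infty}f_{\underline{i}|_n}(x_0)$ together with the continuity of $f_j$, since $f_j\circ f_{\underline{k}|_{n-1}}(x_0)=f_{(j\underline{k})|_n}(x_0)$. Writing $i_1=\underline{i}|_1$, this gives $\Phi(\underline{j})=f_{i_1}(\Phi(\sigma\underline{j}))$ for every $\underline{j}\in\mathcal{P}(\underline{i})=[i_1]$, and in particular $\Phi(\underline{i})=f_{i_1}(\Phi(\sigma\underline{i}))$.

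Next I would use that $f_{i_1}=r_{i_1}O_{i_1}\cdot+t_{i_1}$ with $O_{i_1}$ orthogonal, so $|f_{i_1}(x)-f_{i_1}(y)|=r_{i_1}|x-y|$ for all $x,y\in\mathbb{R}^d$. Hence, for $\underline{j}\in\mathcal{P}(\underline{i})$,
\[
|\Phi(\underline{j})-\Phi(\underline{i})|=r_{i_1}\,\bigl|\Phi(\sigma\underline{j})-\Phi(\sigma\underline{i})\bigr|,
\]
so $|\Phi(\underline{j})-\Phi(\underline{i})|\le r_{i_1}\cdot r$ if and only if $|\Phi(\sigma\underline{j})-\Phi(\sigma\underline{i})|\le r$; equivalently, $\underline{j}\in\Phi^{-1}(B(\Phi(\underline{i}),r_{i_1}r))$ if and only if $\sigma\underline{j}\in\Phi^{-1}(B(\Phi(\sigma\underline{i}),r))$, i.e. $\underline{j}\in\sigma^{-1}\Phi^{-1}(B(\Phi(\sigma\underline{i}),r))$. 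Intersecting both of these equivalent membership conditions with $\mathcal{P}(\underline{i})$ then gives exactly the claimed set identity.

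There is no substantial obstacle here: the only points that need care are that the scaling identity relies on $O_{i_1}$ being orthogonal, and that since $B(\cdot,\cdot)$ denotes the \emph{closed} ball all the inequalities above remain non-strict, so the equivalences — and hence the set equality — are exact rather than holding only up to boundary sets.
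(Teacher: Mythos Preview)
Your proof is correct and follows essentially the same approach as the paper's: both use the semi-conjugacy $\Phi(\underline{j})=f_{i_1}(\Phi(\sigma\underline{j}))$ for $\underline{j}\in[i_1]$ together with the fact that $f_{i_1}$ is a similarity of ratio $r_{i_1}$ to reduce the two membership conditions to the same inequality on $\Phi(\sigma\underline{j})$. The only cosmetic difference is that the paper first rewrites $B(\Phi(\underline{i}),r_{i_1}r)=f_{i_1}(B(\Phi(\sigma\underline{i}),r))$ and then runs a chain of equivalences, whereas you go straight to the distance identity $|\Phi(\underline{j})-\Phi(\underline{i})|=r_{i_1}|\Phi(\sigma\underline{j})-\Phi(\sigma\underline{i})|$.
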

\begin{proof}
For $\underline{i}=i_1i_2\cdots $ and $r>0$ we have
\[
B(\Phi(\underline{i}),r_{\underline{i}|_1} \!\cdot r)=f_{i_1} \left(B(\Phi(\sigma\underline{i}), r)\right).
\]
Thus
\[
\Phi^{-1}\left(B(\Phi(\underline{i}),r_{\underline{i}|_1}\! \cdot r)\right)\cap \mathcal{P}(\underline{i}) = \Phi^{-1}\left(f_{i_1} \left(B(\Phi(\sigma\underline{i}), r)\right)\right)\cap \mathcal{P}(\underline{i}).
\]
As
\begin{eqnarray*}
 \underline{j}=j_1j_2\cdots \in \Phi^{-1}\left(f_{i_1} \left(B(\Phi(\sigma\underline{i}), r)\right)\right)\cap \mathcal{P}(\underline{i})
&\Leftrightarrow& j_1=i_1, \ \Phi(\underline{j}) \in f_{i_1} \left(B(\Phi(\sigma\underline{i}), r)\right)\\
&\Leftrightarrow&  j_1=i_1, \ f_{j_1}\left(\Phi(\sigma\underline{j})\right) \in f_{i_1} \left(B(\Phi(\sigma\underline{i}), r)\right)\\
&\Leftrightarrow&  j_1=i_1, \ \Phi(\sigma\underline{j}) \in B(\Phi(\sigma\underline{i}), r)\\
&\Leftrightarrow&  j_1=i_1, \ \underline{j} \in \sigma^{-1}\Phi^{-1}\left(B(\Phi(\sigma\underline{i}), r)\right)\\
&\Leftrightarrow& \underline{j} \in \sigma^{-1}\Phi^{-1}\left(B(\Phi(\sigma\underline{i}), r)\right)\cap \mathcal{P}(\underline{i})
\end{eqnarray*}
we get $ \Phi^{-1}\left(f_{i_1} \left(B(\Phi(\sigma\underline{i}), r)\right)\right)\cap \mathcal{P}(\underline{i})=\sigma^{-1} \Phi^{-1}\left(B(\Phi(\sigma\underline{i}),r)\right)\cap \mathcal{P}(\underline{i})$, hence the conclusion.
\end{proof}

For $\underline{i}\in\Lambda^{\mathbb{N}}$ and $n\ge 1$, conditioning on $\mu([\underline{i}|_n])>0$, we obtain
\begin{align}
&\frac{\mu\left(B_\Phi(\underline{i},n)\right)}{ \mu^{[\underline{i}|_n]}\left( B_\Phi(\sigma^n\underline{i},0)\right)}\\
&= \prod_{k=0}^{n-1}\frac{\mu^{[\underline{i}|_k]}\left( B_\Phi(\sigma^k\underline{i},n-k)\right)}{\mu^{[\underline{i}|_{k+1}]}\left( B_\Phi(\sigma^{k+1}\underline{i},n-k-1)\right)} \nonumber\\
&= \prod_{k=0}^{n-1}\frac{\mu^{[\underline{i}|_k]}\left( B_\Phi(\sigma^k\underline{i},n-k)\right)}{\mu^{[\underline{i}|_{k}]}\left(B_\Phi(\sigma^k\underline{i},n-k)\cap \mathcal{P}(\sigma^{k}\underline{i})\right)}  \frac{\mu^{[\underline{i}|_{k}]}\left(B_\Phi(\sigma^k\underline{i},n-k)\cap \mathcal{P}(\sigma^{k}\underline{i})\right)}{\mu^{[\underline{i}|_{k+1}]}\left(B_\Phi(\sigma^{k+1}\underline{i},n-k-1)\right)}\nonumber\\
&= \prod_{k=0}^{n-1}\frac{\mu^{[\underline{i}|_k]}\left( B_\Phi(\sigma^k\underline{i},n-k)\right)}{\mu^{[\underline{i}|_{k}]}\left(B_\Phi(\sigma^k\underline{i},n-k)\cap \mathcal{P}(\sigma^{k}\underline{i})\right)}  \frac{\mu^{[\underline{i}|_{k}]}\left(\sigma^{-1}B_\Phi(\sigma^{k+1}\underline{i},n-k-1)\cap \mathcal{P}(\sigma^{k}\underline{i})\right)}{\mu^{[\underline{i}|_{k+1}]}\left(B_\Phi(\sigma^{k+1}\underline{i},n-k-1)\right)}\nonumber\\
&= \prod_{k=0}^{n-1}\frac{\mu^{[\underline{i}|_k]}\left( B_\Phi(\sigma^k\underline{i},n-k)\right)}{\mu^{[\underline{i}|_{k}]}\left(B_\Phi(\sigma^k\underline{i},n-k)\cap \mathcal{P}(\sigma^{k}\underline{i})\right)}\cdot  W^{[\underline{i}|_k]}_{i_{k+1}}\nonumber\\
&= \prod_{k=0}^{n-1}\frac{\bar{\mu}^{[\underline{i}|_k]}\left( B_\Phi(\sigma^k\underline{i},n-k)\right)}{\bar{\mu}^{[\underline{i}|_{k}]}\left(B_\Phi(\sigma^k\underline{i},n-k)\cap \mathcal{P}(\sigma^{k}\underline{i})\right)}\cdot  W^{[\underline{i}|_k]}_{i_{k+1}}.\label{eqm1}
\end{align}

To complete the proof of (i) we need the following lemma.

\begin{lemma}\label{123}
$\mathbb{P}_*$-a.s. for $\mu$-a.e. $\underline{i}\in\Lambda^{\mathbb{N}}$,
\begin{itemize}
\item[(1)] $\lim_{n\to\infty}\frac{1}{n}\log  \prod_{k=0}^{n-1} W^{[\underline{i}|_k]}_{i_{k+1}}=\sum_{i=1}^m\mathbb{E}(W_i\log W_i)$;
\item[(2)] $\lim_{n\to\infty}\frac{1}{n}\log  r_{\underline{i}|_n}=\sum_{i=1}^m\mathbb{E}(W_i)\log r_i$;
\item[(3)] $\lim_{n\to\infty}\frac{1}{n}\log \|\mu^{[\underline{i}|_n]}\|=0$.
\end{itemize}
\end{lemma}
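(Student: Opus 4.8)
The plan is to prove the three limits by applying the ergodic theorem (Birkhoff, or more precisely the skew-product ergodicity from Proposition~\ref{ge}) to suitable Birkhoff sums on $(\Omega',\mathcal{F}',\mathbb{Q},T)$, handling parts (1) and (2) this way, and treating (3) as the genuinely delicate point.

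\textbf{Part (1).} Observe that $\log W^{[\underline{i}|_k]}_{i_{k+1}} = g \circ T^k(\underline{i},\omega)$ where $g(\underline{i},\omega) = \log W^{[\emptyset]}_{i_1}(\omega)$, since $T^k(\underline{i},\omega) = (\sigma^k\underline{i}, \eta_{\underline{i}|_k}\omega)$ and by construction $W^{[\underline{i}|_k]}=W^{[\emptyset]}\circ\eta_{\underline{i}|_k}$. By the definition of the Peyri\`ere measure,
\[
\mathbb{E}_{\mathbb{Q}}(g) = \int_{\Omega^*}\int_{\Lambda^{\mathbb{N}}}\log W_{i_1}^{[\emptyset]}(\omega)\,\mu(\mathrm{d}\underline{i},\omega)\,\mathbb{P}^*(\mathrm{d}\omega) = \sum_{i=1}^m \mathbb{E}(W_i \log W_i),
\]
using that $\mu([i]) = W_i^{[\emptyset]} Y^{[i]}$ and $\mathbb{E}(Y^{[i]})=1$ with $Y^{[i]}$ independent of $W_i^{[\emptyset]}$. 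One must check $g\in L^1(\mathbb{Q})$: the positive part is controlled by $\mathbb{E}(W_i (\log W_i)^+)<\infty$ which follows from assumption (a1) (indeed $\mathbb{E}(W_i^p)<\infty$ for some $p>1$), and the negative part $\mathbb{E}(W_i(\log W_i)^-)$ is finite since $x(\log x)^-$ is bounded on $[0,1]$. Then Birkhoff's theorem together with the ergodicity (mixing) from Proposition~\ref{ge} gives $\frac1n\sum_{k=0}^{n-1} g\circ T^k \to \mathbb{E}_{\mathbb{Q}}(g)$ for $\mathbb{Q}$-a.e.\ $(\underline{i},\omega)$, which is exactly (1).

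\textbf{Part (2).} Similarly $\log r_{\underline{i}|_n} = \sum_{k=0}^{n-1} \log r_{i_{k+1}} = \sum_{k=0}^{n-1} h\circ T^k(\underline{i},\omega)$ with $h(\underline{i},\omega) = \log r_{i_1}$, a bounded function. Here $\mathbb{E}_{\mathbb{Q}}(h) = \sum_{i=1}^m \mathbb{E}(\mu([i]))\log r_i = \sum_{i=1}^m \mathbb{E}(W_i)\log r_i$, again using $\mathbb{E}(\mu([i]))=\mathbb{E}(W_i)$. Birkhoff plus Proposition~\ref{ge} yields (2) directly.

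\textbf{Part (3), the main obstacle.} The quantity $\|\mu^{[\underline{i}|_n]}\| = Y^{[\underline{i}|_n]}$ is \emph{not} a Birkhoff sum, so one cannot simply apply the ergodic theorem. The natural route is to show that $\frac1n\log Y^{[\underline{i}|_n]}\to 0$ $\mathbb{Q}$-a.s.\ by a Borel--Cantelli argument. Since all $Y^{[i]}$ have the same law as $Y$ with $\mathbb{E}(Y^p)<\infty$ for some $p>1$ (by assumption (a1) and the Durrett--Liggett / Kahane--Peyri\`ere estimate quoted in Section~2.3), for any $\varepsilon>0$ the Markov inequality gives $\mathbb{P}^*(Y^{[\underline{i}|_n]} > e^{\varepsilon n}) \le e^{-p\varepsilon n}\mathbb{E}(Y^p)$; one must pass this bound through the Peyri\`ere measure, using that under $\mathbb{Q}$ the relevant tail event can be expressed via $\mathbb{P}^*$ with an extra harmless factor (since $\mathbb{E}_{\mathbb{Q}}$ of a function of $Y^{[\underline{i}|_n]}$ equals $\mathbb{E}(Y\cdot F(Y^{[i]}))$-type expressions by the cascade structure, and $\mathbb{E}(Y^{1+\delta})<\infty$ absorbs the size-biasing). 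Summability in $n$ then gives $\limsup \frac1n\log Y^{[\underline{i}|_n]}\le \varepsilon$ a.s. For the lower bound one needs $\frac1n\log Y^{[\underline{i}|_n]}\ge -\varepsilon$ eventually, i.e.\ control of $\mathbb{P}(Y < e^{-\varepsilon n})$; this is where care is required since $Y$ may be small with non-negligible probability. However, under the Peyri\`ere measure $Y^{[\underline{i}|_n]}$ appears size-biased (it multiplies the cylinder mass), so $\mathbb{Q}(Y^{[\underline{i}|_n]} < \delta)\le \delta\cdot(\text{const})$-type bounds are available, or alternatively one invokes that $\mathbb{E}(Y^{-s})<\infty$ for small $s>0$ when condition (a0) holds (a standard fact for non-degenerate cascades, or provable directly from the recursion \eqref{yi} via a contraction argument on negative moments). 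Combining the two one-sided Borel--Cantelli estimates and letting $\varepsilon\to 0$ along a countable sequence yields (3). I expect the bookkeeping of how the size-biasing in $\mathbb{Q}$ interacts with the moment bounds on $Y$ to be the fiddly part; conceptually it is just ``$Y^{[\underline{i}|_n]}$ is a stationary sequence with good moments, hence $o(n)$ on the log scale.''
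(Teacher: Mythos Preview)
Your treatment of parts (1) and (2) is exactly what the paper does: it simply says ``(1) and (2) follow from the strong law of large numbers under the Peyri\`ere measure $\mathbb{Q}$'', which is your Birkhoff argument on $(\Omega',\mathcal{F}',\mathbb{Q},T)$ in one line. Your integrability checks are the right ones and are implicit in the paper's phrasing.

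For part (3) the approaches diverge. The paper does not argue at all: it writes ``(3) follows from \cite[Theorem IV(ii)]{Ba99}'', i.e.\ it quotes a ready-made result of Barral on the regularity of Mandelbrot martingales. Your Borel--Cantelli route is a genuine, self-contained alternative, and it is correct. In fact it is simpler than you make it sound: under $\mathbb{Q}$ the variable $Y^{[\underline{i}|_n]}$ has the size-biased law $\mathbb{Q}(Y^{[\underline{i}|_n]}\in A)=\mathbb{E}(Y\chi_{\{Y\in A\}})$ (immediate from $\mu([j])=Q_jY^{[j]}$, independence, and $\sum_j\mathbb{E}(Q_j)=1$), so the upper tail gives $\mathbb{Q}(Y^{[\underline{i}|_n]}>e^{\varepsilon n})\le e^{-(p-1)\varepsilon n}\mathbb{E}(Y^p)$ and the lower tail gives simply $\mathbb{Q}(Y^{[\underline{i}|_n]}<e^{-\varepsilon n})=\mathbb{E}(Y\chi_{\{Y<e^{-\varepsilon n}\}})\le e^{-\varepsilon n}$. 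Both are summable, no negative moments are needed, and the ``fiddly bookkeeping'' you anticipate evaporates. What your approach buys is a proof that stays inside the paper's own toolkit rather than appealing to an external reference; what the citation buys is brevity.
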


\begin{proof}
(1) and (2) follow from the strong law of large numbers under the Peyri\`ere measure $\mathbb{Q}$. (3) follows from \cite[Theorem IV(ii)]{Ba99}.
\end{proof}

Combining Lemma \ref{fnf}, \eqref{eqm1} and Lemma \ref{123} we have proved that $\mathbb{P}_*$-a.s. for $\mu$-a.e. $\underline{i}\in\Lambda^{\mathbb{N}}$,
\[
\lim_{n\to\infty}\frac{\log \Phi\mu(B(\Phi(\underline{i}),r_{\underline{i}|_n}))}{\log r_{\underline{i}|_n}}=\frac{\mathbb{E}(\mathbf{H}_{\bar{\mu}}(\mathcal{P} \, |\, \mathcal{B}_\Phi))+\sum_{i=1}^m\mathbb{E}(W_i\log W_i)}{\sum_{i=1}^m\mathbb{E}(W_i)\log r_i},
\]
which gives the conclusion.

\subsection{Proof of Theorem \ref{thmed}(ii)}

The proof is analogous to that of Theorem \ref{thmed}(i); we can formally replace $\Phi$ by $\pi g\Phi$. Here we only present the differences.  For $n\ge 0$, $g\in G$ and $\underline{i}\in\Lambda^{\mathbb{N}}$ let
\[
B_{\pi g\Phi}(\underline{i},n)=(\pi g\Phi)^{-1}\left(B(\pi g \Phi(\underline{i}),R\cdot r_{\underline{i}|_n})\right).
\]
Notice that $B_{\pi g\Phi}(\underline{i},0)=\Lambda^\mathbb{N}$ for all $\underline{i}\in\Lambda^{\mathbb{N}}$. For $n\ge 1$ let
\[
f_n: \Lambda^{\mathbb{N}} \times \Omega_* \times G \ni (\underline{i},\omega,g) \mapsto -\log \frac{\bar{\mu}\left(B_{\pi g\Phi}(\underline{i},n)\cap \mathcal{P}(\underline{i})\right)}{\bar{\mu}\left(B_{\pi g\Phi}(\underline{i},n)\right)} \in\mathbb{R}.
\]
Using Proposition \ref{fenghu} again in the case of $\eta=\mathcal{N}$ and $\varphi=\pi g \Phi$ we get that given any $\omega\in\Omega^*$ such that $\|\mu\|>0$ and given any $g\in G$, for $\mu$-a.e. $\underline{i}\in\Lambda^{\mathbb{N}}$,
\begin{equation}\label{fnpg}
\lim_{n\to \infty} f_n(\underline{i},\omega,g) =\mathbf{I}_{\bar{\mu}}(\mathcal{P} \,|\,\mathcal{B}_{\pi g\Phi})(\underline{i}):=f(\underline{i},\omega,g).
\end{equation}
Furthermore,
\[
\int_{\Lambda^{\mathbb{N}}} \sup_n |f_n(\underline{i},\omega, g)|\, \bar{\mu}(\mathrm{d}\underline{i},\omega) \le \mathbf{H}_{\bar{\mu}}(\mathcal{P})+C_d\le \log m+C_d.
\]
This implies that $\sup_n |f_n|\in L^1(\mathbb{Q}\times \xi)$. By using Theorem \ref{etM} and Proposition \ref{gee} it follows that $\mathbb{P}_*$-a.s. for $\xi$-a.e. $g\in G$ and $\mu$-a.e. $\underline{i}\in\Lambda^{\mathbb{N}}$,
\begin{equation}\label{fnf2}
\lim_{n\to\infty} \frac{1}{n}\sum_{k=0}^{n-1}f_{n-k}\circ T_\phi^k(\underline{i},\omega,g)=\mathbb{E}_{\mathbb{Q}\times \xi}(f)=\mathbb{E}_{\mathbb{P}^*\times \xi}(\mathbf{H}_{\bar{\mu}}(\mathcal{P} \, |\, \mathcal{B}_{\pi g\Phi})).
\end{equation}

The following is an analogue of Lemma \ref{bphi}.

\begin{lemma}\label{bpiphi}
For $\underline{i}\in\Lambda^{\mathbb{N}}$, $g\in G$ and $r>0$ we have
\[
(\pi g\Phi)^{-1}\left(B(\pi g\Phi(\underline{i}),r_{\underline{i}|_1}\!\cdot r)\right)\cap \mathcal{P}(\underline{i})=\sigma^{-1} (\pi g O_{\underline{i}|_1}\Phi)^{-1}\left(B(\pi g O_{\underline{i}|_1}\Phi(\sigma\underline{i}),r)\right)\cap \mathcal{P}(\underline{i}).
\]
\end{lemma}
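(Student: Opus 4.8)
The plan is to follow the proof of Lemma~\ref{bphi} almost verbatim, replacing the similarity $f_{i_1}$ by its composition with the (non-injective) linear map $\pi g$; the only new point is that the orthogonal factor $O_{\underline{i}|_1}$ of $f_{i_1}$ gets absorbed into the direction of the projection. First I would record the elementary factorisation: for any $\underline{j}=j_1j_2\cdots$ with $j_1=i_1$ one has $\Phi(\underline{j})=f_{i_1}(\Phi(\sigma\underline{j}))$, so, since $f_{i_1}(x)=r_{\underline{i}|_1}O_{\underline{i}|_1}x+t_{i_1}$ and $\pi g$ is linear,
\[
\pi g\Phi(\underline{j})=r_{\underline{i}|_1}\,\pi gO_{\underline{i}|_1}\Phi(\sigma\underline{j})+\pi gt_{i_1}.
\]
Applying this also with $\underline{j}=\underline{i}$ and subtracting gives, for every $\underline{j}\in\mathcal{P}(\underline{i})=[i_1]$,
\[
\bigl|\pi g\Phi(\underline{j})-\pi g\Phi(\underline{i})\bigr|=r_{\underline{i}|_1}\,\bigl|\pi gO_{\underline{i}|_1}\Phi(\sigma\underline{j})-\pi gO_{\underline{i}|_1}\Phi(\sigma\underline{i})\bigr|.
\]
This plays the role of the identity $B(\Phi(\underline{i}),r_{\underline{i}|_1}r)=f_{i_1}(B(\Phi(\sigma\underline{i}),r))$ used in Lemma~\ref{bphi}; since $\pi g$ is not injective it must be phrased at the level of distances rather than as a pushforward of balls.

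With this identity the lemma follows by the same chain of equivalences as in Lemma~\ref{bphi}. For $\underline{j}$ with $j_1=i_1$ (equivalently $\underline{j}\in\mathcal{P}(\underline{i})$), membership of $\underline{j}$ in $(\pi g\Phi)^{-1}(B(\pi g\Phi(\underline{i}),r_{\underline{i}|_1}r))$ means $|\pi g\Phi(\underline{j})-\pi g\Phi(\underline{i})|\le r_{\underline{i}|_1}r$; dividing by $r_{\underline{i}|_1}>0$ and using the displayed identity, this is equivalent to $|\pi gO_{\underline{i}|_1}\Phi(\sigma\underline{j})-\pi gO_{\underline{i}|_1}\Phi(\sigma\underline{i})|\le r$, i.e. to $\sigma\underline{j}\in(\pi gO_{\underline{i}|_1}\Phi)^{-1}(B(\pi gO_{\underline{i}|_1}\Phi(\sigma\underline{i}),r))$, i.e. to $\underline{j}\in\sigma^{-1}(\pi gO_{\underline{i}|_1}\Phi)^{-1}(B(\pi gO_{\underline{i}|_1}\Phi(\sigma\underline{i}),r))$. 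Intersecting both descriptions with $\mathcal{P}(\underline{i})$ yields the asserted equality.

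I do not expect a serious obstacle here; the only point needing attention — and the reason the intersection with $\mathcal{P}(\underline{i})$ cannot be dropped — is that the factorisation $\Phi(\underline{j})=f_{i_1}\Phi(\sigma\underline{j})$ is valid only when $\underline{j}$ begins with $i_1$, so the computation above makes sense only on the cylinder $[i_1]$. The content is thus purely affine bookkeeping, with $O_{\underline{i}|_1}$ transferred from acting on the point to acting on the projection, which is exactly what is needed so that Lemma~\ref{bpiphi} slots into the telescoping product in the proof of Theorem~\ref{thmed}(ii) in the same way that Lemma~\ref{bphi} does in part~(i).
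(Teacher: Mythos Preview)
Your proof is correct and follows essentially the same route as the paper: both track the chain of equivalences from Lemma~\ref{bphi} on the cylinder $[i_1]$, transferring the orthogonal factor $O_{\underline{i}|_1}$ from the point to the projection. The only cosmetic difference is that the paper writes the key step as the set identity $B(\pi g\Phi(\underline{i}),r_{\underline{i}|_1}r)=\pi g f_{i_1}(B(\Phi(\sigma\underline{i}),r))$ (which is valid since orthogonal projections send balls to balls of the same radius), whereas you phrase it at the level of distances; your caution about $\pi g$ being non-injective is therefore unnecessary, though harmless.
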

\begin{proof}
For $\underline{i}=i_1i_2\cdots $ and $r>0$ we have
\[
B(\pi g \Phi(\underline{i}),r_{\underline{i}|_1}\!\cdot r)=\pi  g f_{i_1} \left(B(\Phi(\sigma\underline{i}), r)\right).
\]
Thus
\[
(\pi g \Phi)^{-1}\left(B(\pi g\Phi(\underline{i}),r_{\underline{i}|_1}\!\cdot r)\right)\cap \mathcal{P}(\underline{i}) = (\pi g \Phi)^{-1}\left(\pi g f_{i_1} \left(B(\Phi(\sigma\underline{i}), r)\right)\right)\cap \mathcal{P}(\underline{i}).
\]
But
\begin{eqnarray*}
&& \underline{j}=j_1j_2\cdots \in(\pi g \Phi)^{-1}\left(\pi g f_{i_1} \left(B(\Phi(\sigma\underline{i}), r)\right)\right)\cap \mathcal{P}(\underline{i})\\
&\Leftrightarrow& j_1=i_1, \ \Phi(\underline{j}) \in (\pi g)^{-1}\left(\pi g f_{i_1} \left(B(\Phi(\sigma\underline{i}), r)\right)\right)\\
&\Leftrightarrow&  j_1=i_1, \ f_{j_1}\left(\Phi(\sigma\underline{j})\right) \in (\pi g)^{-1}\left(\pi g f_{i_1} \left(B(\Phi(\sigma\underline{i}), r)\right)\right)\\
&\Leftrightarrow&  j_1=i_1, \ \pi g f_{j_1}\left(\Phi(\sigma\underline{j})\right) \in \pi g f_{i_1} \left(B(\Phi(\sigma\underline{i}), r)\right)\\
&\Leftrightarrow&  j_1=i_1, \ \pi g O_{j_1}\left(\Phi(\sigma\underline{j})\right) \in \pi g O_{i_1} \left(B(\Phi(\sigma\underline{i}), r)\right)\\
&\Leftrightarrow&  j_1=i_1, \ \pi g O_{j_1} \Phi(\sigma\underline{j}) \in B(\pi g O_{i_1}\Phi(\sigma\underline{i}), r)\\
&\Leftrightarrow&  j_1=i_1, \ \underline{j} \in \sigma^{-1}(\pi g O_{j_1} \Phi)^{-1}\left(B(\pi g O_{i_1} \Phi(\sigma\underline{i}), r)\right)\\
&\Leftrightarrow& \underline{j} \in \sigma^{-1}(\pi g O_{\underline{i}|_1}\Phi)^{-1}\left(B(\pi g O_{\underline{i}|_1}\Phi(\sigma\underline{i}), r)\right)\cap \mathcal{P}(\underline{i}),
\end{eqnarray*}
which gives the conclusion.
\end{proof}

For $\underline{i}\in\Lambda^{\mathbb{N}}$ and $n\ge 1$, conditioning on $\mu([\underline{i}|_n])>0$,\begin{align}
&\frac{\mu\big(B_{\pi g\Phi}(\underline{i},n)\big)}{\mu^{[\underline{i}|_n]}\big( B_{\pi g O_{\underline{i}|_n}\Phi}(\sigma^n\underline{i},0)\big)}\nonumber\\
&= \prod_{k=0}^{n-1}\frac{\mu^{[\underline{i}|_k]}\big( B_{\pi g O_{\underline{i}|_k}\Phi}(\sigma^k\underline{i},n-k)\big)}{\mu^{[\underline{i}|_{k+1}]}\big( B_{\pi g O_{\underline{i}|_{k+1}}\Phi}(\sigma^{k+1}\underline{i},n-k-1)\big)}\nonumber\\
&= \prod_{k=0}^{n-1}\frac{\mu^{[\underline{i}|_k]}\big( B_{\pi g O_{\underline{i}|_k}\Phi}(\sigma^k\underline{i},n-k)\big)}{\mu^{[\underline{i}|_{k}]}\big(B_{\pi g O_{\underline{i}|_{k}}\Phi}(\sigma^k\underline{i},n-k)\cap \mathcal{P}(\sigma^{k}\underline{i})\big)} \frac{\mu^{[\underline{i}|_{k}]}\big(B_{\pi g O_{\underline{i}|_k}\Phi}(\sigma^k\underline{i},n-k)\cap \mathcal{P}(\sigma^{k}\underline{i})\big)}{\mu^{[\underline{i}|_{k+1}]}\big(B_{\pi g O_{\underline{i}|_{k+1}}\Phi}(\sigma^{k+1}\underline{i},n-k-1)\big)}\nonumber\\
&= \prod_{k=0}^{n-1}\frac{\mu^{[\underline{i}|_k]}\big( B_{\pi g O_{\underline{i}|_k}\Phi}(\sigma^k\underline{i},n-k)\big)}{\mu^{[\underline{i}|_{k}]}\big(B_{\pi g O_{\underline{i}|_k}\Phi}(\sigma^k\underline{i},n-k)\cap \mathcal{P}(\sigma^{k}\underline{i})\big)}\frac{\mu^{[\underline{i}|_{k}]}\big(\sigma^{-1}B_{\pi g O_{\underline{i}|_{k+1}}\Phi}(\sigma^{k+1}\underline{i},n-k-1)\cap \mathcal{P}(\sigma^{k}\underline{i})\big)}{\mu^{[\underline{i}|_{k+1}]}\big(B_{\pi g O_{\underline{i}|_{k+1}}\Phi}(\sigma^{k+1}\underline{i},n-k-1)\big)}\nonumber\\
&= \prod_{k=0}^{n-1}\frac{\mu^{[\underline{i}|_k]}\big( B_{\pi g O_{\underline{i}|_k}\Phi}(\sigma^k\underline{i},n-k)\big)}{\mu^{[\underline{i}|_{k}]}\big(B_{\pi g O_{\underline{i}|_k}\Phi}(\sigma^k\underline{i},n-k)\cap \mathcal{P}(\sigma^{k}\underline{i})\big)}\cdot  W^{[\underline{i}|_k]}_{i_{k+1}}\nonumber\\
&= \prod_{k=0}^{n-1}\frac{\bar{\mu}^{[\underline{i}|_k]}\big( B_{\pi g O_{\underline{i}|_k}\Phi}(\sigma^k\underline{i},n-k)\big)}{\bar{\mu}^{[\underline{i}|_{k}]}\big(B_{\pi g O_{\underline{i}|_k}\Phi}(\sigma^k\underline{i},n-k)\cap \mathcal{P}(\sigma^{k}\underline{i})\big)}\cdot  W^{[\underline{i}|_k]}_{i_{k+1}}.\label{eqm1'}
\end{align}
Notice that for $k\ge 0$,
\[
f_{n-k}\circ T_\phi^k(\underline{i},\omega,g)=\log\frac{\bar{\mu}^{[\underline{i}|_k]}\big( B_{\pi g O_{\underline{i}|_k}\Phi}(\sigma^k\underline{i},n-k)\big)}{\bar{\mu}^{[\underline{i}|_{k}]}\big(B_{\pi g O_{\underline{i}|_k}\Phi}(\sigma^k\underline{i},n-k)\cap \mathcal{P}(\sigma^{k}\underline{i})\big)}.
\]
Using \eqref{fnf2} we conclude that $\mathbb{P}_*$-a.s. for $\xi$-a.e. $g\in G$ and $\mu$-a.e. $\underline{i}\in\Lambda^{\mathbb{N}}$,
\begin{equation}\label{eqm2}
\lim_{n\to\infty}\frac{1}{n}\log \prod_{k=0}^{n-1}\frac{\bar{\mu}^{[\underline{i}|_k]}\big( B_{\pi g O_{\underline{i}|_k}\Phi}(\sigma^k\underline{i},n-k)\big)}{\bar{\mu}^{[\underline{i}|_{k}]}\big(B_{\pi g O_{\underline{i}|_k}\Phi}(\sigma^k\underline{i},n-k)\cap \mathcal{P}(\sigma^{k}\underline{i})\big)}
=\mathbb{E}_{\mathbb{P}^*\times \xi}(\mathbf{H}_{\bar{\mu}}(\mathcal{P} \, |\, \mathcal{B}_{\pi g\Phi})).
\end{equation}
This completes the proof.

\subsection{Proof of Theorem \ref{thmed}(iii)}

\begin{proof}
Given $k\ge 1$, $g\in G$ and $\bar{\mu}>0$, Lemma \ref{condimea} yields that for $\mu$-a.e. $\underline{i}\in\Lambda^{\mathbb{N}}$
\[
\bar{\mu}_{\underline{i},\pi g}(B_{\Phi}(\underline{i},k)\cap \mathcal{P}(\underline{i}))=\lim_{n\to\infty} \frac{\bar{\mu}(B_{\Phi}(\underline{i},k)\cap B_{\pi g\Phi}(\underline{i},n)\cap \mathcal{P}(\underline{i}))}{\bar{\mu}(B_{\pi g\Phi}(\underline{i},n))}.
\]
From Lemmas \ref{bphi} and \ref{bpiphi} we get 
\begin{align*}
&\frac{\bar{\mu}(B_{\Phi}(\underline{i},k)\cap B_{\pi g\Phi}(\underline{i},n)\cap \mathcal{P}(\underline{i}))}{\bar{\mu}(B_{\pi g\Phi}(\underline{i},n))}\\
=&\frac{\bar{\mu}(B_{\Phi}(\underline{i},k)\cap B_{\pi g\Phi}(\underline{i},n)\cap \mathcal{P}(\underline{i}))}{\bar{\mu}( B_{\pi g\Phi}(\underline{i},n)\cap \mathcal{P}(\underline{i}))}\frac{\bar{\mu}(B_{\pi g\Phi}(\underline{i},n)\cap \mathcal{P}(\underline{i}))}{\bar{\mu}(B_{\pi g\Phi}(\underline{i},n))}\\
=&\frac{\bar{\mu}^{[\underline{i}|_1]}(B_{\Phi}(\sigma\underline{i},k-1)\cap B_{\pi gO_{\underline{i}|_1}\Phi}(\sigma\underline{i},n-1))}{\bar{\mu}^{[\underline{i}|_1]}( B_{\pi gO_{\underline{i}|_1}\Phi}(\sigma\underline{i},n-1))}\frac{\bar{\mu}(B_{\pi g\Phi}(\underline{i},n)\cap \mathcal{P}(\underline{i}))}{\bar{\mu}(B_{\pi g\Phi}(\underline{i},n))}.
\end{align*}
Since $\bar{\mu}^{[\underline{i}|_1]}$ is absolutely continuous with respect to $\sigma\bar{\mu}|_{[\underline{i}|_1]}$, we obtain, in a similar way to the proof of Lemma \ref{condimea}, that for $\mu$-a.e. $\underline{i}\in\Lambda^{\mathbb{N}}$,
\[
\lim_{n\to\infty} \frac{\bar{\mu}^{[\underline{i}|_1]}(B_{\Phi}(\sigma\underline{i},k-1)\cap B_{\pi gO_{\underline{i}|_1}\Phi}(\sigma\underline{i},n-1))}{\bar{\mu}^{[\underline{i}|_1]}( B_{\pi gO_{\underline{i}|_1}\Phi}(\sigma\underline{i},n-1))}=\bar{\mu}_{\sigma\underline{i},\pi g O_{\underline{i}|_1}}^{[\underline{i}|_1]}(B_{\Phi}(\sigma\underline{i},k-1)).
\]
On the other hand, by \eqref{fnpg},
\[
\lim_{n\to\infty}\frac{\bar{\mu}(B_{\pi g\Phi}(\underline{i},n)\cap \mathcal{P}(\underline{i}))}{\bar{\mu}(B_{\pi g\Phi}(\underline{i},n))}=\exp(-\mathbf{I}_{\bar{\mu}}(\mathcal{P} \,|\,\mathcal{B}_{\pi g\Phi})(\underline{i})).
\] 
Hence, for $k\ge 1$, $\mathbb{P}_*$ a.s. for $\xi$-a.e. $g\in G$ and $\mu$-a.e. $\underline{i}\in\Lambda^{\mathbb{N}}$,
\begin{align*}
\bar{\mu}_{\underline{i},\pi g}(B_{\Phi}(\underline{i},k)\cap \mathcal{P}(\underline{i}))=\bar{\mu}_{\sigma\underline{i},\pi g O_{\underline{i}|_1}}^{[\underline{i}|_1]}(B_{\Phi}(\sigma\underline{i},k-1))\cdot \exp(-\mathbf{I}_{\bar{\mu}}(\mathcal{P} \,|\,\mathcal{B}_{\pi g\Phi})(\underline{i})).
\end{align*}
This gives, noting that $\bar{\mu}_{\sigma^n\underline{i},\pi gO_{\underline{i}|_n}}^{[\underline{i}|_n]}(B_{\Phi}(\sigma^n\underline{i},0))=1$,
\begin{equation}\label{cm}
\bar{\mu}_{\underline{i},\pi g}(B_{\Phi}(\underline{i},n))
=\prod_{k=0}^{n-1} \frac{\bar{\mu}_{\sigma^k\underline{i},\pi gO_{\underline{i}|_k}}^{[\underline{i}|_k]}(B_{\Phi}(\sigma^k\underline{i},n-k))}{\bar{\mu}_{\sigma^k\underline{i},\pi gO_{\underline{i}|_k}}^{[\underline{i}|_k]}(B_{\Phi}(\underline{i},n-k)\cap \mathcal{P}(\sigma^k\underline{i}))}  \exp({-\mathbf{I}_{\bar{\mu}^{[\underline{i}|_k]}}(\mathcal{P} \,|\,\mathcal{B}_{\pi g O_{\underline{i}|_k}\Phi})(\sigma^k\underline{i})}).
\end{equation}

We need the following lemma:
\begin{lemma}\label{fnf'}
$\mathbb{P}_*$ a.s. for $\xi$-a.e. $g\in G$ and $\mu$-a.e. $\underline{i}\in\Lambda^{\mathbb{N}}$,
\[
\lim_{n\to \infty}\frac{1}{n}\log\prod_{k=0}^{n-1} \frac{\bar{\mu}_{\sigma^k\underline{i},\pi gO_{\underline{i}|_k}}^{[\underline{i}|_k]}(B_{\Phi}(\sigma^k\underline{i},n-k))}{\bar{\mu}_{\sigma^k\underline{i},\pi gO_{\underline{i}|_k}}^{[\underline{i}|_k]}(B_{\Phi}(\underline{i},n-k)\cap \mathcal{P}(\sigma^k\underline{i}))} =\mathbb{E}(\mathbf{H}_{\bar{\mu}}(\mathcal{P} \, |\,  \mathcal{B}_{\Phi})).
\]
\end{lemma}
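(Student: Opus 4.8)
The plan is to repeat, almost verbatim, the argument used for Lemma~\ref{fnf}, but to run it on the compact group extension $(X,\mathcal{F}'\otimes\mathcal{B}_G,\mathbb{Q}\times\xi,T_\phi)$ instead of on $(\Omega',\mathcal{F}',\mathbb{Q},T)$. The first task is to recognise the $k$-th factor of the product as $\widetilde{f}_{n-k}\circ T_\phi^k$ for a suitable family $\{\widetilde{f}_m\}_{m\ge1}$ of functions on $X$; once this is done the statement follows from Maker's ergodic theorem (Theorem~\ref{etM}) together with the ergodicity of $T_\phi$ (Proposition~\ref{gee}).

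For $m\ge1$ I would set
\[
\widetilde{f}_m(\underline{i},\omega,g)=\log\frac{\bar{\mu}_{\underline{i},\pi g}\bigl(B_\Phi(\underline{i},m)\bigr)}{\bar{\mu}_{\underline{i},\pi g}\bigl(B_\Phi(\underline{i},m)\cap\mathcal{P}(\underline{i})\bigr)},
\]
where $\bar{\mu}=\bar{\mu}(\cdot,\omega)$ and $\bar{\mu}_{\underline{i},\pi g}$ denotes its Rohlin conditional measure along the fibre partition $\mathcal{P}_{\pi g}$. Since $T_\phi^k(\underline{i},\omega,g)=(\sigma^k\underline{i},\eta_{\underline{i}|_k}\omega,gO_{\underline{i}|_k})$ and $\bar{\mu}(\cdot,\eta_{\underline{i}|_k}\omega)=\bar{\mu}^{[\underline{i}|_k]}(\cdot,\omega)$ by \eqref{muij}, substituting into the definition shows that $\widetilde{f}_{n-k}\circ T_\phi^k(\underline{i},\omega,g)$ is exactly the logarithm of the $k$-th factor appearing in \eqref{cm}. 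Hence the quantity in the lemma equals $\frac1n\sum_{k=0}^{n-1}\widetilde{f}_{n-k}\circ T_\phi^k$.

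It remains to check the hypotheses of Theorem~\ref{etM} and to identify the limit. Apply Proposition~\ref{fenghu} with $\eta=\mathcal{P}_{\pi g}$, $\varphi=\Phi$ and $\nu=\bar{\mu}$: because $B_\Phi(\underline{i},m)=\Phi^{-1}(B(\Phi(\underline{i}),R\cdot r_{\underline{i}|_m}))$ with $R\cdot r_{\underline{i}|_m}\to0$, we obtain $\widetilde{f}_m\to\widetilde{f}$ where $\widetilde{f}(\underline{i},\omega,g)=\mathbf{I}_{\bar{\mu}}(\mathcal{P}\mid\widehat{\mathcal{P}_{\pi g}}\vee\mathcal{B}_\Phi)(\underline{i})$; and since every fibre of $\mathcal{P}_{\pi g}$ has the form $\Phi^{-1}((\pi g)^{-1}\{y\})$ we have $\widehat{\mathcal{P}_{\pi g}}\subseteq\mathcal{B}_\Phi$, so this collapses to $\widetilde{f}(\underline{i},\omega,g)=\mathbf{I}_{\bar{\mu}}(\mathcal{P}\mid\mathcal{B}_\Phi)(\underline{i})$, which does not depend on $g$. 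The same application of Proposition~\ref{fenghu} bounds $\sup_m|\widetilde{f}_m(\underline{i},\omega,g)|$ by a function $h(\underline{i},\omega)$ with $\int_{\Lambda^\mathbb{N}}h\,\mathrm{d}\bar{\mu}\le\mathbf{H}_{\bar{\mu}}(\mathcal{P})+C_d\le\log m+C_d$, a bound independent of $g$; multiplying by $\|\mu\|$, integrating over $\mathbb{P}^*$ and then over $\xi$, and using $\mathbb{E}(\|\mu\|)=1$, gives $\sup_m|\widetilde{f}_m|\in L^1(\mathbb{Q}\times\xi)$. Maker's theorem then yields, $\mathbb{Q}\times\xi$-a.e.,
\[
\lim_{n\to\infty}\frac1n\sum_{k=0}^{n-1}\widetilde{f}_{n-k}\circ T_\phi^k=\lim_{n\to\infty}\frac1n\sum_{k=0}^{n-1}\widetilde{f}\circ T_\phi^k,
\]
and by Proposition~\ref{gee} the right-hand side equals $\mathbb{E}_{\mathbb{Q}\times\xi}(\widetilde{f})=\mathbb{E}_{\mathbb{Q}}(\mathbf{I}_{\bar{\mu}}(\mathcal{P}\mid\mathcal{B}_\Phi))=\mathbb{E}(\mathbf{H}_{\bar{\mu}}(\mathcal{P}\mid\mathcal{B}_\Phi))$, the last step being the identity already established in the proof of Lemma~\ref{fnf}. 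Rewriting ``$\mathbb{Q}\times\xi$-a.e.'' as ``$\mathbb{P}_*$-a.s., for $\xi$-a.e.\ $g\in G$, for $\mu$-a.e.\ $\underline{i}$'' by Fubini completes the argument.

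The step I expect to require the most care is the identification of $\widetilde{f}_{n-k}\circ T_\phi^k$ with the $k$-th factor of \eqref{cm}: this is precisely where the behaviour of Rohlin's disintegration under the shift enters, and it rests on the geometric identity of Lemma~\ref{bpiphi} and the rescaling relation \eqref{rescal} in exactly the way \eqref{cm} was derived. The other delicate point is that the domination and the limit function furnished by Proposition~\ref{fenghu} must be uniform in $g\in G$; this holds because the bound $\mathbf{H}_{\bar{\mu}}(\mathcal{P})+C_d$ and the collapsed $\sigma$-algebra $\mathcal{B}_\Phi$ do not involve $g$. With these two observations in hand, the ergodic-theoretic part of the proof is routine.
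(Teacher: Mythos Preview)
Your proposal is correct and follows essentially the same route as the paper: define $f_n$ on $X$ via the conditional measures $\bar{\mu}_{\underline{i},\pi g}$, apply Proposition~\ref{fenghu} with $\eta=\mathcal{P}_{\pi g}$ and $\varphi=\Phi$ to get pointwise convergence to $\mathbf{I}_{\bar{\mu}}(\mathcal{P}\mid\mathcal{B}_\Phi)$ (using $\widehat{\mathcal{P}_{\pi g}}\subseteq\mathcal{B}_\Phi$) together with the $g$-uniform $L^1$ bound, then invoke Maker's theorem and Proposition~\ref{gee}. One small remark: the identification $\widetilde{f}_{n-k}\circ T_\phi^k=$ ($k$-th factor) needs only the formula for $T_\phi^k$ and \eqref{muij}, not Lemma~\ref{bpiphi} or \eqref{rescal}---those were used earlier to derive \eqref{cm} itself, not to recognise its factors.
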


\begin{proof}
For $n\ge 1$ let
\[
f_n(\underline{i},\omega,g)=-\log \frac{\bar{\mu}_{\underline{i},\pi g}(B_{\Phi}(\underline{i},n)\cap \mathcal{P}(\underline{i}))}{\bar{\mu}_{\underline{i},\pi g}(B_{\Phi}(\underline{i},n))}.
\]
Applying Proposition \ref{fenghu} in the case of $\eta=\mathcal{P}_{\pi g}$ and $\varphi=\Phi$ we get that given $g\in G$, for $\mathbb{Q}$-a.e. $(\underline{i},\omega)\in\Omega'$ the sequence $f_n$ converges to
\[
f:=\mathbf{I}_{\bar{\mu}}(\mathcal{P}\,|\, \widehat{\mathcal{P}_{\pi g}}\vee \mathcal{B}_{\Phi})=\mathbf{I}_{\bar{\mu}}(\mathcal{P}\,|\,  \mathcal{B}_\Phi),
\]
Here we have used that the $\sigma$-algebra $\widehat{\mathcal{P}_{\pi g}}$ is a sub-$\sigma$-algebra of $\mathcal{B}_{\Phi}$. Moreover,  since $\int \sup_{n} |f_n| \, \mathrm{d}\bar{\mu}\le \mathbf{H}_{\bar{\mu}}(\mathcal{P})+C_d\le \log m+C_d$, $\sup_{n} |f_n|$ is integrable. As
\[
\frac{1}{n}\log\prod_{k=0}^{n-1} \frac{\bar{\mu}_{\sigma^k\underline{i},\pi gO_{\underline{i}|_k}}^{[\underline{i}|_k]}(B_{\Phi}(\sigma^k\underline{i},n-k))}{\bar{\mu}_{\sigma^k\underline{i},\pi gO_{\underline{i}|_k}}^{[\underline{i}|_k]}(B_{\Phi}(\underline{i},n-k)\cap \mathcal{P}(\sigma^k\underline{i}))}=\frac{1}{n}\sum_{k=0}^{n-1} f_{n-k}\circ T_\phi^k(\underline{i},\omega,g),
\]
the conclusion follows from Theorem \ref{etM} and Proposition \ref{gee}.
\end{proof}

By Proposition \ref{gee} we have $\mathbb{P}_*$ a.s. for $\xi$-a.e. $g\in G$ and $\mu$-a.e. $\underline{i}\in\Lambda^{\mathbb{N}}$,
\[
\lim_{n\to\infty}\frac{1}{n}\sum_{k=0}^{n-1} \mathbf{I}_{\bar{\mu}^{[\underline{i}|_k]}}(\mathcal{P} \,|\,\mathcal{B}_{\pi g O_{\underline{i}|_k}\Phi})(\sigma^k\underline{i})=\mathbb{E}_{\mathbb{P}^*\times \xi}(\mathbf{H}_{\bar{\mu}}(\mathcal{P} \, |\,\mathcal{B}_{\pi g\Phi}))
\]
Combining \eqref{cm} and Lemma \ref{fnf'} we get that $\mathbb{P}_*$-a.s. for $\xi$-a.e. $g\in G$ and $\mu$-a.e. $\underline{i}\in\Lambda^{\mathbb{N}}$,
\[
\lim_{n\to \infty}\frac{1}{n}\log \bar{\mu}_{\underline{i},\pi g}(B_{\Phi}(\underline{i},n))=\mathbb{E}(\mathbf{H}_{\bar{\mu}}(\mathcal{P} \, |\, \mathcal{B}_{\Phi}))-\mathbb{E}_{\mathbb{P}^*\times \xi}(\mathbf{H}_{\bar{\mu}}(\mathcal{P} \, |\,\mathcal{B}_{\pi g\Phi})),
\]
so that $\mathbb{P}_*$-a.s. for $\xi$-a.e. $g\in G$ and $\mu$-a.e. $\underline{i}\in\Lambda^{\mathbb{N}}$,
\begin{equation}\label{edcm}
\lim_{r\to 0} \frac{\log \Phi\bar{\mu}_{\underline{i},\pi g}(B(\Phi(\underline{i}),r))}{\log r}=\frac{\mathbb{E}(\mathbf{H}_{\bar{\mu}}(\mathcal{P} \, |\, \mathcal{B}_{\Phi}))-\mathbb{E}_{\mathbb{P}^*\times \xi}(\mathbf{H}_{\bar{\mu}}(\mathcal{P} \, |\,\mathcal{B}_{\pi g\Phi}))}{\sum_{i=1}^m\mathbb{E}(W_i)\log r_i}.
\end{equation}
Together with \eqref{disinter} this yields (iii).
\end{proof}

\section{Dimension of projections}\label{dip}
\setcounter{theorem}{0}

In this section we generalize the results of  \cite{HoSh12} on projections and images under $C^1$ functions without singularites to random cascade measures.

Let $D=B(0,R)$ where $R=\max\{|x|:x\in K\}$. Denote by $\mathcal{M}$ the family of probability measures on $D$ and let $\mathcal{B}_\star$ be its weak-$\star$ topology. Denote by $C(\mathcal{M})$ the family of all continuous functions on $\mathcal{M}$. We use the separability of $C(\mathcal{M})$ in $\|\cdot\|_\infty$ to get convergence of ergodic averages for all $h\in C(\mathcal{M})$.

\begin{proposition}\label{unier}
$\mathbb{P}_*$-a.s. for $\xi$-a.e. $g$ and $\mu$-a.e. $\underline{i}$,
\[
\lim_{N\to\infty} \frac{1}{N} \sum_{n=0}^{N-1}h(gO_{\underline{i}|_{n}}\Phi\bar{\mu}^{[\underline{i}|_{n}]})=\mathbb{E}_\mathbb{Q\times \xi}(h(g\Phi\bar{\mu}))
\]
 for all $h\in C(\mathcal{M})$.
\end{proposition}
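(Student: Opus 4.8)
The plan is to recognize the left-hand side as a Birkhoff ergodic average for the skew product system $(X,\mathcal{F}'\otimes\mathcal{B}_G,\mathbb{Q}\times\xi,T_\phi)$ and invoke Proposition \ref{gee} together with the separability of $C(\mathcal{M})$. First I would fix $h\in C(\mathcal{M})$ and define a function $\Psi_h:X\to\mathbb{R}$ by $\Psi_h(\underline{i},\omega,g)=h(g\Phi\bar{\mu}(\cdot,\omega))$, where $\bar{\mu}=\bar{\mu}^{[\emptyset]}$ is the normalized cascade measure associated to $\omega\in\Omega^*$ (set to an arbitrary fixed probability measure on $D$ when $\|\mu\|=0$, a set of $\mathbb{Q}$-measure zero). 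Since $h$ is bounded and continuous on the compact metrizable space $\mathcal{M}$ and $\omega\mapsto\Phi\bar{\mu}(\cdot,\omega)$ is measurable into $\mathcal{M}$ while $g\mapsto g\Phi\bar{\mu}$ is continuous, $\Psi_h$ is a bounded measurable function on $X$, hence $\Psi_h\in L^1(\mathbb{Q}\times\xi)$.

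The second step is to verify that $\Psi_h\circ T_\phi^{\,n}(\underline{i},\omega,g)=h(gO_{\underline{i}|_n}\Phi\bar{\mu}^{[\underline{i}|_n]})$. Indeed, iterating $T_\phi(\underline{i},\omega,g)=(\sigma\underline{i},\eta_{\underline{i}|_1}\omega,gO_{\underline{i}|_1})$ gives $T_\phi^{\,n}(\underline{i},\omega,g)=(\sigma^n\underline{i},\eta_{\underline{i}|_n}\omega,gO_{\underline{i}|_n})$, using $O_{i_1}O_{i_2}\cdots O_{i_n}=O_{\underline{i}|_n}$ and the cocycle property of $\eta$. Then by \eqref{muij} we have $\mu(\cdot,\eta_{\underline{i}|_n}\omega)=\mu^{[\underline{i}|_n]}(\cdot,\omega)$, so after normalization $\bar{\mu}(\cdot,\eta_{\underline{i}|_n}\omega)=\bar{\mu}^{[\underline{i}|_n]}(\cdot,\omega)$ on the event where this is nontrivial, and consequently
\[
\Psi_h\circ T_\phi^{\,n}(\underline{i},\omega,g)=h\big((gO_{\underline{i}|_n})\,\Phi\bar{\mu}^{[\underline{i}|_n]}(\cdot,\omega)\big),
\]
which is exactly the summand appearing in the statement. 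Applying Birkhoff's ergodic theorem to $(X,\mathbb{Q}\times\xi,T_\phi)$, which is ergodic by Proposition \ref{gee}, yields for each fixed $h$ that $\frac1N\sum_{n=0}^{N-1}\Psi_h\circ T_\phi^n\to\mathbb{E}_{\mathbb{Q}\times\xi}(\Psi_h)=\mathbb{E}_{\mathbb{Q}\times\xi}(h(g\Phi\bar{\mu}))$ for $(\mathbb{Q}\times\xi)$-a.e. $(\underline{i},\omega,g)$, i.e. $\mathbb{P}_*$-a.s., for $\xi$-a.e. $g$ and $\mu$-a.e. $\underline{i}$.

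The final step is to upgrade this from "for each fixed $h$, almost surely" to "almost surely, for all $h\in C(\mathcal{M})$". Since $\mathcal{M}$ with the weak-$\star$ topology is a compact metrizable space, $C(\mathcal{M})$ with $\|\cdot\|_\infty$ is separable; fix a countable dense subset $\{h_j\}_{j\ge1}$. Intersecting the countably many full-measure sets gives a single full-measure set on which the ergodic average converges for every $h_j$. For general $h\in C(\mathcal{M})$ and $\varepsilon>0$, pick $h_j$ with $\|h-h_j\|_\infty<\varepsilon$; then $\big|\frac1N\sum h(gO_{\underline{i}|_n}\Phi\bar{\mu}^{[\underline{i}|_n]})-\mathbb{E}_{\mathbb{Q}\times\xi}(h(g\Phi\bar{\mu}))\big|$ is within $2\varepsilon$ of the corresponding quantity for $h_j$ (uniformly in $N$), so letting $N\to\infty$ and then $\varepsilon\to0$ gives convergence for $h$ as well. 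The main obstacle, such as it is, is purely measure-theoretic bookkeeping: checking that $\omega\mapsto\Phi\bar{\mu}(\cdot,\omega)\in\mathcal{M}$ is genuinely measurable and that the normalization map behaves well on the $\mathbb{P}_*$-conull set where $\|\mu\|>0$; once $\Psi_h$ is seen to be a bona fide bounded measurable function the ergodic theorem does all the work.
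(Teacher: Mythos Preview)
Your proof is correct and follows essentially the same approach as the paper: define the observable $h\circ M$ where $M(\underline{i},\omega,g)=g\Phi\bar{\mu}$, check that its $T_\phi$-iterates reproduce the summands, apply Birkhoff together with the ergodicity from Proposition~\ref{gee} for a countable dense family in $C(\mathcal{M})$, and then pass to all $h$ by a $\|\cdot\|_\infty$-density argument. Your treatment is slightly more explicit about measurability and the normalization on $\{\|\mu\|=0\}$, but the structure is identical.
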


\begin{proof}
Let $\{h_k\}_{k\ge 1}$ be a countable dense sequence in $C(\mathcal{M})$. If we write
\[
M:X\ni (\underline{i},\omega,g) \mapsto g\Phi \bar{\mu}\in \mathcal{M},
\]
then it is easy to verify that for $n\ge 1$
\[
M\circ T_\phi^n(\underline{i},\omega,g)=gO_{\underline{i}|_n}\Phi \bar{\mu}^{[\underline{i}|_n]}.
\]
It follows  from Proposition \ref{gee}  that $\mathbb{P}_*$-a.s. for $\xi$-a.e. $g$ and $\mu$-a.e. $\underline{i}$,
\[
\lim_{N\to\infty} \frac{1}{N} \sum_{n=0}^{N-1}h_k(gO_{\underline{i}|_{n}}\Phi\bar{\mu}^{[\underline{i}|_{n}]})=\mathbb{E}_\mathbb{Q\times\xi}(h_k(g\Phi\bar{\mu})) \quad\mbox{ for all } k\ge 1.
\]
For any $h\in C(\mathcal{M})$, take a subsequence $\{h_k'\}_{k\ge 1}$ of $\{h_k\}_{k\ge 1}$ that converges to $h$. On the one hand, since $\mathcal{M}$ is compact, $h$ is bounded, so by the uniform convergence in $\|\cdot \|_\infty$,
\[
\lim_{k\to\infty} \mathbb{E}_\mathbb{Q\times\xi}(h_k'(g\Phi\bar{\mu}))=\mathbb{E}_\mathbb{Q\times\xi}(h(g\Phi\bar{\mu})).
\]
On the other hand, for each $N$,
\[
\left|\frac{1}{N} \sum_{n=0}^{N-1}h_k'(gO_{\underline{i}|_{n}}\Phi\bar{\mu}^{[\underline{i}|_{n}]})-\frac{1}{N} \sum_{n=0}^{N-1}h(gO_{\underline{i}|_{n}}\Phi\bar{\mu}^{[\underline{i}|_{n}]})\right|\le \|h_k'-h\|_\infty.
\]
Thus the limit
\[
\lim_{N\to\infty} \frac{1}{N} \sum_{n=0}^{N-1}h(gO_{\underline{i}|_{n}}\Phi\bar{\mu}^{[\underline{i}|_{n}]})
\]
exists and equals $\lim_{k\to\infty}\mathbb{E}_\mathbb{Q\times\xi}(h_k'(g\Phi\bar{\mu}))=\mathbb{E}_\mathbb{Q\times\xi}(h(g\Phi\bar{\mu}))$, $\mathbb{P}_*$-a.s. for $\xi$-a.e. $g$ and $\mu$-a.e. $\underline{i}$.
\end{proof}

\subsection{Lower bound for the dimension of projections}\label{lbdp}

We use the $\rho$-tree method in \cite{HoSh12} to obtain close lower bounds for the dimensions of projections of measures. Let $\rho=\max\{r_i:i\in\Lambda\}$ and $c=\min\{r_i:i\in \Lambda\}$. For $i=i_1\cdots i_n\in\Lambda^*$ write $r_i^-=r_{i_1}\cdots r_{i_{n-1}}$. For each $q\ge 1$ we redefine the alphabet used for symbolic space to obtain one for which the contraction ratios do not vary too much:
\[
\Lambda_q=\{i\in\Lambda^*: r_i^->\rho^q \text{ and } r_i \le \rho^q\}.
\]
By definition $c\rho^q<r_i\le \rho^q$ for all $i\in \Lambda_q$. The canonical mapping $\Phi_q:(\Lambda_q^\mathbb{N},d_{\rho^q}) \mapsto K$ is $R$-Lipschitz where $R=\max\{|x|:x\in K\}$. Setting $\{W^{[j]}_q=(Q_i^{[j]})_{i\in \Lambda_q}: j\in\Lambda_q^*\}$ gives a random cascade measure $\mu_q$ on $\Lambda_q^\mathbb{N}$. Observe that it is the same random cascade measure as $\mu$ on embedding $\Lambda_q^\mathbb{N}$ into $\Lambda^\mathbb{N}$. (The slight ambiguity in notation should not cause any confusion: the subscript $q$ will always refer to the parameter redefining the alphabet, so, for example, $W_{q,i}$ refers to the element of $ W_q\equiv W^{[\emptyset]}_q $ with index $i \in \Lambda_q$.)

Let $G_q=\overline{\langle O_i:i\in\Lambda_q\rangle}$ and let $\xi_q$ be its normalised Haar measure. As before, $\Pi_{d,k}$ is the set of orthogonal projections from $\mathbb{R}^d$ onto its $k$-dimensional subspaces.

For $\pi \in \Pi_{d,k}$, $q\in\mathbb{N}$ and $\nu$ a measure on $\mathbb{R}^d$, define
\[
e_q(\pi,\nu)=\frac{1}{q\log(1/ \rho)}H_{\rho^q}(\pi\nu).
\]
So $e_q:\Pi_{d,k}\times \mathcal{M}\mapsto [0,k]$ is lower semicontinuous. Let $E_q(\pi)=\mathbb{E}_{\mathbb{P}^*\times \xi_q}(e_q(\pi,g\Phi\bar{\mu}))$. 

\begin{theorem}\label{eq}
$\mathbb{P}_*$-a.s. for $\xi_q$-a.e. $g\in G_q$,
\[
\dim_H(\pi g\Phi\bar{\mu})\ge \frac{q\log(1/\rho)}{q\log(1/\rho)-\log c} E_q(\pi)-O(1/q) \text{ for all } \pi\in\Pi_{d,k},
\]
where the implied constant in $O(1/q)$ only depends on $\rho$, $c$, $R$ and $k$.
\end{theorem}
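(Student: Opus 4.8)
The plan is to adapt the $\rho$-tree (local entropy average) method of \cite{HoSh12} to the skew-product framework of the preceding sections. One first notes that the construction of the Peyri\`ere measure together with Propositions~\ref{gee} and~\ref{unier} go through verbatim for the refined alphabet $\Lambda_q$, its rotation group $G_q$ and Haar measure $\xi_q$; in particular, $\mathbb{P}_*$-a.s., for $\xi_q$-a.e.\ $g\in G_q$ and $\mu$-a.e.\ $\underline{i}$,
\[
\lim_{N\to\infty}\frac1N\sum_{n=0}^{N-1}h\big(gO_{\underline{i}|_n}\Phi\bar\mu^{[\underline{i}|_n]}\big)=\mathbb{E}_{\mathbb{Q}\times\xi_q}\big(h(g\Phi\bar\mu)\big)\qquad\text{for every }h\in C(\mathcal M).
\]
The argument then has three parts: a pathwise lower bound for the local dimension of $\pi g\Phi\bar\mu$ in terms of a Birkhoff average of the projected scaling entropies $e_q$; the evaluation of that average as $E_q(\pi)$ via the ergodic theorem; and a finite-net argument making the conclusion uniform over all $\pi\in\Pi_{d,k}$.

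For the pathwise bound I would fix a realisation of $\bar\mu$ and a $\mu$-typical $\underline{i}$ and equip $\Phi\bar\mu$ with the tree structure coming from the $\Lambda_q$-cylinders: the level-$n$ cell $\Phi([\underline{i}|_n])$ has diameter at most $2Rr_{\underline{i}|_n}$ with $(c\rho^q)^n<r_{\underline{i}|_n}\le(\rho^q)^n$, and by self-similarity~\eqref{rescal} the normalised push-forward of $\bar\mu|_{[\underline{i}|_n]}$ under $\pi g\Phi$ is the image, under a similarity of ratio $r_{\underline{i}|_n}$, of $\pi gO_{\underline{i}|_n}\Phi\bar\mu^{[\underline{i}|_n]}$. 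Following \cite{HoSh12}, one telescopes the $\rho^q$-scaling entropy of $\pi g\Phi\bar\mu$ over the scale blocks corresponding to successive tree levels; because the refinement from level $n$ to level $n+1$ is by a relative ratio $r_{i_{n+1}}\le\rho^q$, because $H_\delta$ is decreasing in $\delta$, and because the scaling entropy $H_{\rho^q}$ differs from the $\rho^q$-adic partition entropy by only $O_k(1)$, the entropy increment at level $n$ is at least $H_{\rho^q}\big(\pi gO_{\underline{i}|_n}\Phi\bar\mu^{[\underline{i}|_n]}\big)-O_k(1)$. By the strong law of large numbers, as in Lemma~\ref{123}, the level-$N$ cell has $-\log$-diameter asymptotic to $N\chi_q$ with $\chi_q=-\sum_{i\in\Lambda_q}\mathbb{E}(W_{q,i})\log r_i$ and $q\log(1/\rho)\le\chi_q<q\log(1/\rho)-\log c$; moreover consecutive tree scales have ratio bounded away from $0$ and $1$, so passing to all radii in Billingsley's lemma costs nothing asymptotically. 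Dividing the telescoped entropy by $N\chi_q$, using $\chi_q<q\log(1/\rho)-\log c$ in the main term and $\chi_q\ge q\log(1/\rho)$ to absorb the per-level losses, one obtains, for $\mu$-a.e.\ $\underline{i}$, every $g\in G_q$ and all $\pi$, that the lower local dimension of $\pi g\Phi\bar\mu$ at $\pi g\Phi(\underline{i})$ is at least
\[
\frac{q\log(1/\rho)}{q\log(1/\rho)-\log c}\,\liminf_{N\to\infty}\frac1N\sum_{n=0}^{N-1}e_q\big(\pi gO_{\underline{i}|_n},\Phi\bar\mu^{[\underline{i}|_n]}\big)\ -\ O(1/q).
\]

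To evaluate the liminf, observe that $e_q(\pi,\cdot)$ is lower semicontinuous on the compact space $\mathcal M$, hence an increasing limit of continuous functions; applying the ergodic average above to each continuous approximant and passing to the limit gives, $\mathbb{P}_*$-a.s.\ and for $\xi_q$-a.e.\ $g$, that the liminf is at least $\mathbb{E}_{\mathbb{Q}\times\xi_q}(e_q(\pi g,\Phi\bar\mu))=E_q(\pi)$ for each fixed $\pi$. Combined with the previous display and Billingsley's lemma (the local-dimension bound now being a genuine constant for $\mu$-a.e.\ $\underline{i}$, so that it passes to $\pi g\Phi\bar\mu$-a.e.\ $y$), this proves the theorem for each fixed $\pi$. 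To upgrade to all $\pi$ simultaneously, the standard comparison of $H_{\rho^q}$ with $\rho^q$-adic partition entropy shows $|H_{\rho^q}(\pi'\nu)-H_{\rho^q}(\pi\nu)|\le C_k$, and hence $|e_q(\pi',\nu)-e_q(\pi,\nu)|=O(1/q)$, whenever $\|\pi-\pi'\|\le\rho^q/(2R)$, uniformly in $\nu\in\mathcal M$; consequently $|E_q(\pi')-E_q(\pi)|=O(1/q)$, and since each $gO_{\underline{i}|_n}$ is orthogonal the two Birkhoff averages differ termwise by $O(1/q)$. Taking a \emph{finite} $\rho^q/(2R)$-net $\Pi_0$ of the compact set $\Pi_{d,k}$, the bound holds simultaneously for all $\pi_0\in\Pi_0$, and for arbitrary $\pi$ one chooses $\pi_0\in\Pi_0$ within $\rho^q/(2R)$ of $\pi$ and combines the three $O(1/q)$ errors, obtaining the stated inequality with implied constant depending only on $\rho$, $c$, $R$ and $k$.

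The hard part will be the pathwise estimate: transferring the local entropy average argument of \cite{HoSh12} to the present overlapping, randomised and variable-ratio setting, and in particular verifying that the scaling-entropy increments of the actual projected measure $\pi g\Phi\bar\mu$ dominate the scaling entropies of the projected cylinder components $\pi gO_{\underline{i}|_n}\Phi\bar\mu^{[\underline{i}|_n]}$ up to an $O_k(1)$ loss per level with no separation hypothesis. The ergodic-theoretic input (the $q$-analogue of Proposition~\ref{unier}, obtained from Proposition~\ref{gee} and Theorem~\ref{etM}) and the net argument for the uniformity over $\pi$ should be comparatively routine.
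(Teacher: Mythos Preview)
Your proposal is essentially correct, but it takes a different and more laborious route than the paper's proof in two respects. First, for the pathwise lower bound you set out to reprove the local entropy averages machinery from scratch (telescoping scaling entropies along the $\Lambda_q$-tree and controlling the per-level loss), whereas the paper simply quotes the relevant results of \cite{HoSh12}: it factors the Lipschitz map $\pi g\Phi_q$ through a $\rho^q$-tree via \cite[Theorem~5.4]{HoSh12}, compares scaling entropies along the tree using the $C$-faithfulness of the second factor \cite[Proposition~5.3]{HoSh12}, applies \cite[Theorem~4.4]{HoSh12} to bound $\dim_H h\bar\mu_q$ from below by the entropy average, and transfers back to $\mathbb{R}^k$ by \cite[Proposition~5.2]{HoSh12}. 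What you identify as ``the hard part'' is thus entirely absorbed by citation. Second, for the uniformity over $\pi\in\Pi_{d,k}$ you use a finite $\rho^q/(2R)$-net and the $O(1/q)$ modulus of continuity of $e_q$ in $\pi$; the paper instead exploits that Proposition~\ref{unier} already gives the ergodic limit simultaneously for \emph{all} $h\in C(\mathcal M)$, so approximating the lower semicontinuous $e_q(\pi,\cdot)$ from below by continuous functions and invoking monotone convergence yields $\liminf_N\frac1N\sum_n e_q(\pi,gO_{\underline{i}|_n}\Phi_q\bar\mu_q^{[\underline{i}|_n]})\ge E_q(\pi)$ for all $\pi$ at once, with no net needed. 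Your approach is more self-contained; the paper's is considerably shorter and cleaner, and in particular avoids having to revisit the overlap-and-faithfulness issues that the tree factorization in \cite{HoSh12} was designed to handle.
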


\begin{proof}
Applying Proposition \ref{unier} to a sequence of continuous functions approximating $e_q$ from below and using the monotone convergence theorem, we have that $\mathbb{P}_*$-a.s. for $\xi_q$-a.e. $g$ and $\mu_q$-a.e. $\underline{i}$,
\begin{equation}\label{aveeq}
\liminf \frac{1}{N}\sum_{n=1}^{N}e_q(\pi,gO_{\underline{i}|_{n}}\Phi_q\bar{\mu}_q^{[\underline{i}|_{n}]})\geq  E_q(\pi) \text{ for all } \pi\in\Pi_{d,k}.
\end{equation}
Using the strong law of large numbers we note that  $\mathbb{P}_*$-a.s. for $\mu_q$-a.e. $\underline{i}\in \Lambda_q^{\mathbb{N}}$,
\[
\lim_{n\to \infty}\frac{\log Q_{\underline{i}|_n}}{-n} =-\sum_{i\in\Lambda_q}\mathbb{E}\left(\chi_{\{W_{q,i}> 0\}}W_{q,i}\log W_{q,i}\right)\in(0,\infty),
\]
so in particular, $\mathbb{P}_*$-a.s. for $\mu_q$-a.e. $\underline{i}\in \Lambda_q^{\mathbb{N}}$, $Q_{\underline{i}|_n}>0$ for all $n\ge 1$. Identically,
\[
\chi_{\{Q_{\underline{i}|_n}>0\}}\bar{\mu}_q^{[\underline{i}|_{n}]}=\chi_{\{Q_{\underline{i}|_n}>0\}}\chi_{\{\|\mu_q^{[\underline{i}|_{n}]}\|>0\}}\cdot \frac{\mu_q^{[\underline{i}|_{n}]}}{\|\mu_q^{[\underline{i}|_{n}]}\|}=\sigma^{n}\bar{\mu}_{q,[\underline{i}|_{n}]},
\]
where
\[
\bar{\mu}_{q,[\underline{i}|_{n}]}=\chi_{\{\mu_q([\underline{i}|_{n}])>0\}}\frac{\mu_q|_{[\underline{i}|_{n}]}}{\mu_q([\underline{i}|_{n}])},
\]
so by \eqref{rescal}
\begin{eqnarray*}
H_{\rho^q}(\pi gO_{\underline{i}|_{n}}\Phi_q\chi_{\{Q_{\underline{i}|_{n}}>0\}}\bar{\mu}_q^{[\underline{i}|_{n}]})&=&H_{\rho^q}(\pi gO_{\underline{i}|_{n}}\Phi_q\sigma^{n}\bar{\mu}_{q,[\underline{i}|_{n}]})\\
&=&H_{\rho^q\cdot r_{\underline{i}|_n}}(\pi g\Phi_q\bar{\mu}_{q,[\underline{i}|_{n}]})\\
&\le &H_{(c\rho^q)^{n+1}}(\pi g\Phi_q\bar{\mu}_{q,[\underline{i}|_{n}]}).
\end{eqnarray*}
Hence,  using (\ref{aveeq}), $\mathbb{P}_*$-a.s. for $\xi_q$-a.e. $g$ and $\mu_q$-a.e. $\underline{i}$,
\[
\frac{1}{q\log(1/\rho)}\liminf_{N\to\infty}\frac{1}{N}\sum_{n=1}^N H_{(c\rho^q)^{n+1}}(\pi g\Phi_q\bar{\mu}_{q,[\underline{i}|_{n}]})\ge E_q(\pi) \text{ for all } \pi\in\Pi_{d,k}.
\]
The mapping $f\equiv\pi g\Phi_q: ((\Lambda^q)^{\mathbb{N}},d_{\rho^q})\mapsto \mathbb{R}^k$ is $R$-Lipschitz. By  \cite[Theorem 5.4]{HoSh12} there exist a $\rho^q$-tree $(X,d_{\rho^q})$ and maps $(\Lambda^q)^{\mathbb{N}}\overset{h}{\mapsto} X\overset{f'}{\mapsto} \mathbb{R}^k$ such that $f=f'h$, where $h$ is a tree morphism and $f'$ is $C$-faithful (see \cite[Definition 5.1]{HoSh12}) for some constant $C$ depending  only on $R$ and $k$. Then, applying  \cite[Proposition 5.3]{HoSh12} to the  $c\rho^q$-tree $(X,d_{c\rho^q})$ (for which $f'$ is $c^{-1}C$-faithful), there is a constant $C'$ depending only on $c^{-1}C$ and $k$ such that for all $n\ge 1$,
\[
|H_{(c\rho^q)^{n+1}}(f\bar{\mu}_{q,[\underline{i}|_{n}]})-H_{(c\rho^q)^{n+1}}(h\bar{\mu}_{q,[\underline{i}|_{n}]})|\le C'.
\]
Consequently, $\mathbb{P}_*$-a.s. for $\xi_q$-a.e. $g$ and $\bar{\mu}_q$-a.e. $\underline{i}$,
\[
\frac{1}{q\log(1/\rho)}\liminf_{N\to\infty}\frac{1}{N-1}\sum_{n=1}^N H_{(c\rho^q)^{n+1}}(h\bar{\mu}_{q,[\underline{i}|_{n}]})\ge E_q(\pi)-O(1/q) \text{ for all } \pi\in\Pi_{d,k},
\]
where the constant in $O(1/q)$ only depends on $\rho$ and $C'$. By \cite[Theorem 4.4]{HoSh12} it follows  that $\mathbb{P}_*$-a.s. for $\xi_q$-a.e. $g$,
\[
\dim_H h\bar{\mu}_q\ge \frac{q\log(1/\rho)}{q\log(1/\rho)-\log c}E_q(\pi)-O(1/q) \text{ for all } \pi\in\Pi_{d,k}.
\]
Since $f'$ is $C$-faithful and $f'h\bar{\mu}_q=f\bar{\mu}_q=\pi g \Phi_q\bar{\mu}_q=\pi g\Phi\bar{\mu}$,  the conclusion follows from \cite[Proposition 5.2]{HoSh12}.
\end{proof}

\subsection{Projection theorems}

The projection results in  \cite{HoSh12} require the strong separation condition on the underlying IFS ${\mathcal I}$. With the approach of Section \ref{lbdp} we avoid the need for any separation condition at all. Moreover, our results apply to random cascade measures as well as deterministic measures on self-similar sets.

 For the rest of this section we assume that the rotation group  $G\equiv\overline{\langle O_i:i\in \Lambda\rangle}$ is connected and we denote by $\xi$ its normalised Haar measure. We fix $\pi_0\in \Pi_{d,k}$ and write $\Pi=\pi_0 G$.

We remark that the arguments of this section extend to the more general setting where  the orbit $\pi G$ is of the form $\pi \tilde{G}$, where $\tilde{G}$ is connected. This includes the case of certain restricted families of projections, for example for projections onto the lines lying in certain cones.

\begin{lemma}\label{gq}
If $G$ is connected then
$\pi_0 G_q=\Pi$ 
for each $q\ge 1$
\end{lemma}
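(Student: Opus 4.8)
The plan is to show the two closed subgroups $\pi_0 G_q$ and $\Pi = \pi_0 G$ of the homogeneous space $\Pi_{d,k}$ coincide by a double inclusion, where the nontrivial direction uses connectedness of $G$ together with the fact (already noted in the excerpt) that a compact group equals the closed semigroup generated by any generating set.

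\textbf{Setup and the easy inclusion.} First I would record that $\Lambda_q \subseteq \Lambda^*$, so each $O_i$ with $i \in \Lambda_q$ is a finite product of generators $O_j$, $j \in \Lambda$; hence $\langle O_i : i \in \Lambda_q \rangle \subseteq \langle O_j : j \in \Lambda \rangle$, and taking closures gives $G_q \subseteq G$. Applying $\pi_0$ (acting on the right, i.e. $g \mapsto \pi_0 g$, which is continuous) yields $\pi_0 G_q \subseteq \pi_0 G = \Pi$ immediately.

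\textbf{The reverse inclusion.} For $\Pi \subseteq \pi_0 G_q$ it suffices to show $\pi_0 g \in \pi_0 G_q$ for every $g \in G$, and since $\pi_0 G_q = \pi_0 \overline{\langle O_i : i \in \Lambda_q\rangle}$ is closed (continuous image of a compact set), it is enough to show $\pi_0 G_q$ contains $\pi_0 g$ for $g$ in a dense subset of $G$ — in particular for all finite words $O_{i_1} \cdots O_{i_n}$ with $i_1, \dots, i_n \in \Lambda$, since these are dense in $G$ by definition. Now fix such a word $i = i_1 \cdots i_n \in \Lambda^n$. The key observation is that by iterating the cutting procedure one can extend $i$ to a longer word $i' \in \Lambda^*$ that factors through $\Lambda_q$: concretely, by the definition of $\Lambda_q$ as a cut-set (every infinite string has a unique prefix in $\Lambda_q$, and more generally, since $c\rho^q < r_j \le \rho^q$, one can always append symbols of $\Lambda$ until the running product $r_{i_1\cdots i_N}$ first drops to $\le \rho^q$, giving a word in $\Lambda_q$, then repeat), we may write $i' = w_1 w_2 \cdots w_\ell$ with each $w_s \in \Lambda_q$ and with $i$ a prefix of $i'$, say $i' = i \, u$ for some $u \in \Lambda^*$. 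Then $O_{i'} = O_i O_u$ and also $O_{i'} = O_{w_1} \cdots O_{w_\ell} \in \langle O_j : j \in \Lambda_q \rangle$, so $\pi_0 O_i = \pi_0 O_{i'} O_u^{-1} = (\pi_0 O_{i'}) O_u^{-1}$. This is where connectedness of $G$ enters: since $G$ is connected and compact, and $G_q$ is a closed subgroup, I claim $G_q = G$ (see below), whence $O_u^{-1} \in G_q$ and $\pi_0 O_i \in \pi_0 G_q$.

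\textbf{The main point: $G_q = G$ when $G$ is connected.} Here is the heart of the matter. The subgroup $G_q$ is generated (topologically) by the $O_i$, $i \in \Lambda_q$, and each such $O_i = O_{i_1}\cdots O_{i_{|i|}}$ is a product of $|i|$ generators of $G$, where $|i|$ ranges over some finite set of integers (bounded since $c\rho^q < r_i \le \rho^q$ forces $q\log(1/\rho)/\log(1/c) \le |i| \le$ some bound depending on $q$ and the ratios). Consider the homomorphism $\deg: \langle O_j : j \in \Lambda\rangle \to \mathbb{Z}/(\text{something})$... actually the cleanest route: let $H$ be the closed subgroup $G_q \le G$; I want $H = G$. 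Since $G$ is compact and connected, it suffices to show $H$ has finite index, or better, that $H$ is open; but a closed subgroup of positive Haar measure is open, and $G = \bigcup_{\text{words } w} w\, G_q$... Let me instead argue directly: by the remark in the excerpt, $G = \overline{\langle O_j : j\in\Lambda\rangle^+}$ (closed semigroup), so it suffices to show every finite positive word $O_w$, $w \in \Lambda^*$, lies in $G_q$. Extend $w$ to $w' = w u \in \Lambda^*$ with $w'$ a concatenation of $\Lambda_q$-words, as above; then $O_{w'} \in G_q$. Also extend $u$ itself to $u' = u v$ with $u'$ a concatenation of $\Lambda_q$-words, so $O_{u'} \in G_q$; continuing, $O_{v'} \in G_q$ for $v' = vw''$, etc. Chaining these, $O_w = O_{w'}(O_u)^{-1}$ and $O_u = O_{u'}(O_v)^{-1}$, $\dots$; the issue is the inverses. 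Since $G$ is compact, $(O_u)^{-1} = \lim_k (O_u)^{n_k}$ along some sequence $n_k \to \infty$, and each $(O_u)^{n_k} = O_{uu\cdots u}$ (the word $u$ repeated $n_k$ times) can be extended to a concatenation of $\Lambda_q$-words, hence lies in $G_q$; so $(O_u)^{-1} \in G_q$ as $G_q$ is closed, and therefore $O_w = O_{w'}(O_u)^{-1} \in G_q$. As $w$ was arbitrary, $\langle O_j : j \in \Lambda\rangle \subseteq G_q$, and taking closures $G \subseteq G_q$, giving $G = G_q$. \emph{Where connectedness is used:} one must check that the extension step (appending symbols of $\Lambda$ until the contraction product reaches the window $(c\rho^q, \rho^q]$) is always possible — it is, purely combinatorially, since $\rho = \max_j r_j$ guarantees each appended factor multiplies the running product by at most $\rho$, so the product cannot jump over the window. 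Connectedness is not strictly needed for $G = G_q$ in the argument just given; rather it is the hypothesis under which the statement is asserted, and it guarantees $\Pi = \pi_0 G$ is the full relevant orbit with no component obstructions. Then $\Pi = \pi_0 G = \pi_0 G_q$ follows.

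\textbf{Main obstacle.} The delicate point is handling the inverses $O_u^{-1}$ without leaving $G_q$: one cannot append symbols to "undo" a rotation directly, so I rely on the compact-group fact that $O_u^{-1}$ is a limit of positive powers $O_u^{n_k}$, each of which \emph{is} a positive word and hence (after a harmless further extension to a concatenation of $\Lambda_q$-blocks, which only multiplies by elements of $G_q$) lies in $G_q$; closedness of $G_q$ then finishes it. Verifying that every sufficiently long positive word can be padded into a concatenation of $\Lambda_q$-words is the routine combinatorial lemma underpinning this, immediate from $c\rho^q < r_i \le \rho^q$ and $r_j \le \rho$ for all $j$.
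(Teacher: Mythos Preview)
Your argument has a genuine gap, and the tell is your own remark that ``connectedness is not strictly needed for $G = G_q$ in the argument just given.'' That claim is false, so something must be wrong. Counterexample: take $m=2$, $r_1=r_2=1/2$, $O_1=O_2=$ rotation by $\pi$ in $\mathbb{R}^2$. Then $G=\{I,O_1\}$, and for even $q$ we have $\Lambda_q=\Lambda^q$ and every $O_i$ with $i\in\Lambda_q$ is the identity, so $G_q=\{I\}\ne G$.

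The error is in the step ``each $(O_u)^{n_k}=O_{uu\cdots u}$ can be extended to a concatenation of $\Lambda_q$-words, hence lies in $G_q$.'' Extending $u^{n_k}$ to a $\Lambda_q$-concatenation $u^{n_k}v_k$ gives $O_{u^{n_k}}O_{v_k}\in G_q$, not $O_{u^{n_k}}\in G_q$; you have no control over $O_{v_k}$, and in the counterexample above $O_{v_k}$ can be the nontrivial rotation. The parenthetical ``which only multiplies by elements of $G_q$'' is precisely the unproved (and false) assertion.

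Your combinatorial idea is salvageable, and once fixed it converges with the paper's proof. The clean statement is: every $w\in\Lambda^*$ decomposes as $w=w'r$ with $w'$ a concatenation of $\Lambda_q$-words and $r\in\Lambda_{<q}:=\{i\in\Lambda^*:r_i>\rho^q\}$, a \emph{finite} set. Hence the semigroup generated by $\{O_j:j\in\Lambda\}$, which is dense in $G$, is contained in $\bigcup_{r\in\Lambda_{<q}} G_q\,O_r$, and so $G$ is covered by finitely many right cosets of the closed subgroup $G_q$. Thus $G_q$ has finite index and is open; since $G$ is connected, $G_q=G$. The paper runs essentially the same finite-cover observation through Baire's theorem to conclude $\overline{H}$ has nonempty interior and then invokes the fact that a compact connected Lie group is generated by any neighbourhood of the identity. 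Either way, connectedness is where the argument actually bites, and your attempt to bypass it via limits of positive powers does not close.
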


\begin{proof}
 For $i=i_1i_2\cdots i_l \in \Lambda^*$ (where $i_j \in \Lambda$) let ${O}_i={O}_{i_1}{O}_{i_2}\cdots {O}_{i_l}$. It is sufficient to prove that the group $H := \langle{O}_{i}:i\in \Lambda_q\rangle$ is dense in ${G}$. (Recall that the closed group generated by a set of elements coincides with the closed semigroup generated by them). 

Write $\Lambda_{<q} = \{i \in \Lambda^*:r_i > \rho^q\}.$  Then
$\bigcup_{j \in \Lambda_{<q}}  {O}_{j}H$
is dense in ${G}$. By Baire's category theorem, we may choose $j \in \Lambda_{<q}$ such that
 $\overline{{O}_{j}H}$  has nonempty interior in ${G}$. Consequently $\overline{H}$ has nonempty interior, so if $h$ is in the interior of $\overline{H}$ then $\overline{H} = h^{-1}\overline{H}$. Thus $\overline{H}$ contains a neighborhood of the identity, so since a compact connected Lie group is generated by any neighbourhood of its identity, $\overline{H}={G}$.
\end{proof}

Hence for $\pi \in \Pi$ we have
\[
E_q(\pi)=\mathbb{E}_{\mathbb{P}^*\times \xi_q}(e_q(\pi,g\Phi\bar{\mu}))=\mathbb{E}_{\mathbb{P}^*\times {\xi}}(e_q(\pi,g\Phi\bar{\mu})).
\]
For the same reason we can also deduce from Theorem \ref{thmed}(ii) that $\mathbb{P}_*$-a.s. for ${\xi}$-a.e. $g\in \tilde{G}$, $\pi_0 g\Phi\mu$ is exact-dimensional with dimension
\[
\beta(\pi_0)=\frac{\mathbb{E}_{\mathbb{P}^*\times{\xi}}(\mathbf{H}_{\bar{\mu}}(\mathcal{P} \, |\, \mathcal{B}_{\pi_0 g\Phi}))+\sum_{i=1}^m\mathbb{E}(W_i\log W_i)}{\sum_{i=1}^m\mathbb{E}(W_i)\log r_i}.
\]

\begin{theorem}\label{e}
Let $\pi_0\in \Pi_{d,k}$ and let  $G$ be connected. Then the limit
\[
E(\pi):=\lim_{q\to \infty}E_q(\pi)
\]
exists for every $\pi \in  \Pi$, and $E:  \Pi\mapsto [0,k]$ is lower semi-continuous. Moreover:
\begin{itemize}
\item[(i)] $E(\pi_0 g)=\beta(\pi_0)$ for ${\xi}$-a.e. $g$.
\item[(ii)] For a fixed $\pi\in  \Pi$, $\mathbb{P}_*$-a.s. for ${\xi}$-a.e. $g$,
\[
\dim_e\pi g\Phi\bar{\mu}=\dim_H \pi g\Phi\bar{\mu}=E(\pi).
\]
(Recall that $\dim_e$ is the entropy dimension.)
\item[(iii)] $\mathbb{P}_*$-a.s. for ${\xi}$-a.e. $g\in{G}$,
\[
\dim_H\pi g\Phi\bar{\mu} \ge E(\pi) \text{ for all } \pi\in \Pi.
\]
\end{itemize}
\end{theorem}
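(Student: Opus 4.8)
The key point is to pass from the $q$-dependent lower bounds of Theorem \ref{eq} to a uniform statement by showing the sequence $E_q(\pi)$ converges. First I would establish the existence of $E(\pi) = \lim_{q\to\infty} E_q(\pi)$ by a subadditivity (or near-subadditivity) argument. Recall $E_q(\pi) = \mathbb{E}_{\mathbb{P}^*\times\xi}(e_q(\pi,g\Phi\bar\mu))$ with $e_q(\pi,\nu) = \frac{1}{q\log(1/\rho)}H_{\rho^q}(\pi\nu)$, and the scaling entropy $H_r$ of a projected measure behaves almost additively when one composes scales: splitting a ball of radius $\rho^{q+q'}$ into the $\rho^q$-scale and the conditional $\rho^{q'}$-scale gives, up to a bounded error $C_k$ coming from bounded overlap of balls in $\mathbb{R}^k$, an inequality of the form $H_{\rho^{q+q'}}(\pi\nu) \le H_{\rho^q}(\pi\nu) + \sup H_{\rho^{q'}}(\pi\nu') + C_k$, and similarly from below. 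Using the statistical self-similarity \eqref{ss}--\eqref{rescal} to rewrite the conditional piece as an expectation of $H_{\rho^{q'}}$ over the descendant cascade measures $g O_{\underline{i}|_n}\Phi\bar\mu^{[\underline{i}|_n]}$ (whose law under $\mathbb{P}^*\times\xi$ is again that of $g\Phi\bar\mu$ by Proposition \ref{gee} and Lemma \ref{gq}), I would obtain $q q' E_{q+q'}(\pi) \lessgtr q E_q(\pi) + q' E_{q'}(\pi) + O(1)$ in the appropriate sense, so that $q\log(1/\rho)\,E_q(\pi)$ is almost subadditive and $E_q(\pi)$ converges; lower semicontinuity of $E$ follows from lower semicontinuity of each $e_q$ together with a Fatou-type argument, or from $E$ being a decreasing limit of lower semicontinuous functions once the monotonicity direction is pinned down.

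\textbf{Next, part (i):} identify the limit $E(\pi_0 g)$ for $\xi$-a.e.\ $g$. By Theorem \ref{thmed}(ii) (extended to $\xi$ via Lemma \ref{gq}), for a fixed $\pi$, $\mathbb{P}_*$-a.s.\ for $\xi$-a.e.\ $g$, $\pi g\Phi\bar\mu$ is exact-dimensional with dimension $\beta(\pi_0)$, hence $\dim_e = \dim_H = \beta(\pi_0)$. On the other hand $\dim_e(\pi g\Phi\bar\mu) = \liminf_{r\to 0}\frac{H_r(\pi g\Phi\bar\mu)}{-\log r}$, and $e_q(\pi, g\Phi\bar\mu) = \frac{H_{\rho^q}(\pi g\Phi\bar\mu)}{q\log(1/\rho)}$ is a subsequential version of exactly this ratio; since the cascade measure has bounded scale distortion one checks $\liminf_q e_q(\pi,g\Phi\bar\mu) = \dim_e(\pi g\Phi\bar\mu)$ for these generic $g$. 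Combining with the almost-sure convergence of the Birkhoff average in Proposition \ref{unier} (applied to continuous approximations of $e_q$ from below, as in the proof of Theorem \ref{eq}), the ergodic average of $e_q(\pi, gO_{\underline{i}|_n}\Phi\bar\mu^{[\underline{i}|_n]})$ equals $E_q(\pi)$, and letting $q\to\infty$ identifies $E(\pi_0 g) = \beta(\pi_0)$ for $\xi$-a.e.\ $g$. This also yields (ii): the lower bound $\dim_H \pi g\Phi\bar\mu \ge E(\pi)$ comes from Theorem \ref{eq} by letting $q\to\infty$ (the factor $\frac{q\log(1/\rho)}{q\log(1/\rho)-\log c}\to 1$ and $O(1/q)\to 0$), while the matching upper bound $\dim_e \pi g\Phi\bar\mu \le E(\pi)$ is immediate from $\dim_e = \liminf_r \frac{H_r}{-\log r}$ and the definition of $E$, using $\dim_H \le \dim_e$ throughout.

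\textbf{Finally, part (iii):} upgrade from "for $\xi$-a.e.\ $g$, for each fixed $\pi$" to "for $\xi$-a.e.\ $g$, for \emph{all} $\pi$ simultaneously." This is precisely where Theorem \ref{eq} is designed to help: it already gives, $\mathbb{P}_*$-a.s.\ for $\xi_q$-a.e.\ $g$, the bound $\dim_H(\pi g\Phi\bar\mu) \ge \frac{q\log(1/\rho)}{q\log(1/\rho)-\log c} E_q(\pi) - O(1/q)$ \emph{for all $\pi\in\Pi_{d,k}$ at once}. Taking a countable sequence $q_\ell\to\infty$, intersecting the corresponding full-measure sets of $g$ (using $\xi_{q}$-a.e.\ $\Leftrightarrow$ $\xi$-a.e.\ on $\Pi$ by Lemma \ref{gq}), and letting $\ell\to\infty$ gives $\dim_H(\pi g\Phi\bar\mu) \ge \limsup_\ell E_{q_\ell}(\pi) = E(\pi)$ for all $\pi\in\Pi$ simultaneously, $\mathbb{P}_*$-a.s.\ for $\xi$-a.e.\ $g$.

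\textbf{The main obstacle} I anticipate is the first step: proving the near-subadditivity of $q\log(1/\rho)\,E_q(\pi)$ cleanly. The scaling-entropy functional $H_r$ is only lower semicontinuous and not additive under change of scale; one must carefully control the error incurred when decomposing a ball of radius $\rho^{q+q'}$ using the cascade structure, track the role of the random and non-uniform contraction ratios $r_i$ (which is why the auxiliary alphabet $\Lambda_q$ with $c\rho^q < r_i \le \rho^q$ was introduced), and verify that the conditional entropy at the finer scale really has the same distribution as $e_{q'}(\pi, g\Phi\bar\mu)$ under $\mathbb{P}^*\times\xi$ — this last point relies on the self-similarity identity \eqref{rescal} together with the ergodic-theoretic machinery of Section 2. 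The faithfulness/$\rho$-tree apparatus of \cite{HoSh12} invoked in Theorem \ref{eq} absorbs the geometric distortion of $\pi g\Phi$ versus the symbolic metric, so once the distributional identity is in hand the convergence argument is routine.
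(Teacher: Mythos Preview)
Your proposal diverges from the paper's proof at the first step, and in doing so you make life much harder for yourself. The paper does \emph{not} prove near-subadditivity of $q\mapsto q\log(1/\rho)\,E_q(\pi)$. Instead, the existence of the limit falls out of a two-line sandwich. On one side, Theorem~\ref{eq} together with Lemma~\ref{gq} gives, $\mathbb{P}_*$-a.s.\ for $\xi$-a.e.\ $g$ and \emph{all} $\pi\in\Pi$ simultaneously,
\[
\dim_H(\pi g\Phi\bar\mu)\ \ge\ \limsup_{q\to\infty} E_q(\pi).
\]
On the other side, since $e_q(\pi,g\Phi\bar\mu)=\frac{H_{\rho^q}(\pi g\Phi\bar\mu)}{q\log(1/\rho)}$ satisfies $\liminf_q e_q(\pi,g\Phi\bar\mu)\ge\dim_e(\pi g\Phi\bar\mu)$, Fatou's lemma gives
\[
\mathbb{E}_{\mathbb{P}^*\times\xi}\bigl[\dim_e(\pi g\Phi\bar\mu)\bigr]\ \le\ \liminf_{q\to\infty} E_q(\pi).
\]
Since $\dim_H\le\dim_e$ pointwise, integrating the first display and chaining with the second forces $\limsup_q E_q(\pi)\le\liminf_q E_q(\pi)$. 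Hence the limit exists and every inequality in the chain is an equality, which yields (ii) and (iii) at the same stroke. None of the scale-decomposition, distributional matching of descendant measures, or control of conditional-entropy errors that you outline is needed; the ``main obstacle'' you anticipate simply does not arise.

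Your sketch for lower semicontinuity also has a gap. You suggest that $E$ might be a decreasing limit of the lower semicontinuous $E_q$, but an infimum of lower semicontinuous functions is in general \emph{not} lower semicontinuous; it is the supremum that preserves lower semicontinuity. The paper's argument is different and uses the already-established part~(ii): given $\pi$ and $\epsilon>0$, pick a single $q$ so large that $\frac{q\log(1/\rho)}{q\log(1/\rho)-\log c}E_q(\pi)-O(1/q)>E(\pi)-\epsilon$, then use lower semicontinuity of $E_q$ to find a neighbourhood $\mathcal{U}(\pi)$ on which this bound persists. Theorem~\ref{eq} gives $\dim_H(\pi' g\Phi\bar\mu)\ge E(\pi)-\epsilon$ for all $\pi'\in\mathcal{U}(\pi)$ and $\xi$-a.e.\ $g$, and applying (ii) at each such $\pi'$ converts this into $E(\pi')\ge E(\pi)-\epsilon$.

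Your treatment of (iii) (intersecting full-measure sets over countably many $q$ and letting $q\to\infty$) is correct and matches the paper. For (i) you are working harder than necessary: once (ii) is known, combining it with Theorem~\ref{thmed}(ii) immediately identifies $E(\pi_0 g)=\beta(\pi_0)$ for $\xi$-a.e.\ $g$; no separate ergodic-average argument is required.
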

\begin{proof}
The proof is almost the same as that of \cite[Theorem 8.2]{HoSh12}. By Theorem \ref{eq}  and  Lemma \ref{gq} we have for each $q\ge 1$ that $\mathbb{P}_*$-a.s. for ${\xi}$-a.e. $g\in {G}$,
\[
\dim_H(\pi g\Phi\bar{\mu})\ge \frac{q\log(1/\rho)}{q\log(1/\rho)-\log c} E_q(\pi)-O(1/q) \text{ for all } \pi\in \Pi,
\]
where the implied constant in $O(1/q)$ only depends on $\rho$, $c$, $R$ and $k$. This implies that $\mathbb{P}_*$-a.s. for ${\xi}$-a.e. $g\in {G}$,
\[
\dim_H(\pi g\Phi\bar{\mu})\ge \limsup_{q\to \infty} E_q(\pi) \text{ for all } \pi\in \Pi.
\]
On the other hand by using Fatou's lemma we have
\[
\mathbb{E}_{\mathbb{P}^*\times{\xi}}(\dim_e(\pi g\Phi\bar{\mu})) \le \liminf_{q\to \infty} E_q(\pi)
\]
This implies that $\lim_{q\to\infty} E_{q}(\pi)$ exists for all $\pi \in \Pi$. Then (ii) and (iii) follow directly, and $(i)$ follows from Theorem \ref{thmed}(ii).

For the lower semicontinuity of $E$, fix $\pi\in  \Pi$ and $\epsilon>0$. Using  that $E_q(\pi) \to E(\pi)$ and $E_q$ is lower semicontinuous, there exist a number $q$ and a neighbourhood $\mathcal{U}(\pi)$ of $\pi$ in $\Pi_{d,k}$ such that for all $\pi'\in \mathcal{U}(\pi)$,
\[
 \frac{q\log(1/\rho)}{q\log(1/\rho)-\log c} E_q(\pi')-O(1/q) \ge E(\pi)-\epsilon.
\]
This gives that $\mathbb{P}_*$-a.s. for ${\xi}$-a.e. $g\in{G}$,
\[
\dim_H(\pi' g\Phi\bar{\mu})\ge E(\pi)-\epsilon \text{ for all } \pi'\in \mathcal{U}(\pi).
\]
By (ii) this yields that $E(\pi')\ge E(\pi)-\epsilon$ for all $\pi'\in \mathcal{U}(\pi)$, giving the conclusion.
\end{proof}

We can now obtain a constant lower bound for the dimension of the projected measure over all  $\pi \in \Pi$.

\begin{corollary}\label{c1}
Let  $G$ be connected and let $\pi_0 \in  \Pi_{d,k}$. Then $\mathbb{P}_*$-a.s.
\begin{equation}
\dim_H \pi\Phi\mu \geq \beta(\pi_0)  \quad  \text{ for all } \pi\in \Pi= \pi_0 G.\label{ineqbeta}
\end{equation}
\end{corollary}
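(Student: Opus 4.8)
The plan is to deduce this from Theorem~\ref{e}, the key additional fact being that $E$ is \emph{constant} on $\Pi$, with value $\beta(\pi_0)$. I would first verify that each $E_q$ is constant on $\Pi$. Fix $a\in G_q$. For any measure $\nu$ and any $h\in G_q$ one has $(\pi a)(h\nu)=\pi\bigl((ah)\nu\bigr)$ (composing the rotations), so that $e_q(\pi a,h\Phi\bar{\mu})=e_q(\pi,(ah)\Phi\bar{\mu})$. Since $\xi_q$ is the left Haar measure of $G_q$, the map $h\mapsto ah$ preserves $\xi_q$, and integrating gives $E_q(\pi a)=\mathbb{E}_{\mathbb{P}^*\times\xi_q}\bigl(e_q(\pi,(ah)\Phi\bar{\mu})\bigr)=E_q(\pi)$. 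Now by Lemma~\ref{gq} (this is the only place connectedness of $G$ is used here) $\Pi=\pi_0 G_q$, so any $\pi,\pi'\in\Pi$ may be written $\pi=\pi_0 b$ and $\pi'=\pi_0 c$ with $b,c\in G_q$; then $\pi'=\pi(b^{-1}c)$ with $b^{-1}c\in G_q$, whence $E_q(\pi')=E_q(\pi)$. Letting $q\to\infty$ and using Theorem~\ref{e} shows that $E$ is constant on $\Pi$, and combining with Theorem~\ref{e}(i) — which provides $g$ with $E(\pi_0 g)=\beta(\pi_0)$ — this constant equals $\beta(\pi_0)$.

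Next I would invoke Theorem~\ref{e}(iii): $\mathbb{P}_*$-a.s.\ for $\xi$-a.e.\ $g\in G$ we have $\dim_H\pi g\Phi\bar{\mu}\ge E(\pi)=\beta(\pi_0)$ simultaneously for all $\pi\in\Pi$. A set of full Haar measure is in particular non-empty, so $\mathbb{P}_*$-a.s.\ we may fix one such rotation $g_1\in G$; then $\dim_H\pi g_1\Phi\bar{\mu}\ge\beta(\pi_0)$ for all $\pi\in\Pi$. Because $G$ is a group, $\Pi g_1=\pi_0 G g_1=\pi_0 G=\Pi$, so as $\pi$ runs over $\Pi$ the projection $\pi g_1$ also runs over $\Pi$; hence $\dim_H\pi\Phi\bar{\mu}\ge\beta(\pi_0)$ for all $\pi\in\Pi$. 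Finally, on the event $\{\|\mu\|>0\}$ the measure $\bar{\mu}$ equals $\mu/\|\mu\|$ with $\|\mu\|>0$, so $\pi\Phi\mu$ and $\pi\Phi\bar{\mu}$ differ only by the positive factor $\|\mu\|$ and therefore have the same Hausdorff dimension; this gives \eqref{ineqbeta}.

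The point that needs care — and which is already handled inside the proof of Theorem~\ref{e} — is that the $\xi$-null set of exceptional rotations in part (iii) must be uniform in $\pi$; this is exactly what the lower semicontinuity of $E$, together with the quantitative estimate of Theorem~\ref{eq}, provides. Once one has a single good rotation controlling all projections at once, transporting the bound to every $\pi\in\Pi$ is immediate from the constancy of $E$ and from the fact that right multiplication by any $g_1\in G$ maps $\Pi$ onto itself. I do not foresee any other obstacle; the remaining steps are the routine identities relating $\pi\Phi\mu$, $\pi\Phi\bar{\mu}$ and $(\pi g)\Phi\bar{\mu}$, and the elementary group manipulations above.
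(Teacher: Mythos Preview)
Your proof is correct and takes a genuinely different route from the paper's. The paper argues via lower semicontinuity: it shows that for each $\epsilon>0$ the superlevel set $\mathcal{U}_\epsilon=\{\pi:E(\pi)>\beta(\pi_0)-\epsilon\}$ is open and dense in $\Pi$ (open by lower semicontinuity, dense by Theorem~\ref{e}(i)), then from Theorem~\ref{e}(iii) deduces that $\{\pi:\dim_H\pi\Phi\bar{\mu}>\beta(\pi_0)-\epsilon\}\supseteq\mathcal{U}_\epsilon g$ for $\xi$-a.e.\ $g$, and finally uses that a full-measure family of right-translates of an open non-empty set covers $\Pi$. You instead observe directly that each $E_q$ is \emph{constant} on $\Pi$ by left-invariance of Haar measure on $G_q$ (note that the proof of Lemma~\ref{gq} actually gives $G_q=G$, so you could equally well argue with $G$ and $\xi$), hence $E\equiv\beta(\pi_0)$ on $\Pi$; a single good rotation $g_1$ from Theorem~\ref{e}(iii) then suffices, and $\Pi g_1=\Pi$ transports the bound to every $\pi\in\Pi$. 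Your argument is shorter and avoids the $\epsilon$-approximation entirely; the paper's approach has the mild advantage of isolating lower semicontinuity of $E$ as the only regularity needed, which could in principle be reused in settings where constancy is not available.
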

\begin{proof}
Since $E$ is lower semi-continuous, it follows from Theorem \ref{e}(i)  that for any $\epsilon>0$ the set
\[
\mathcal{U}_\epsilon=\{\pi\in \Pi_{d,k}: E(\pi) >\beta(\pi_0)-\epsilon  \}
\]
is open and dense in $\Pi$. Write $\mathcal{U}_\epsilon g=\{\pi g: \pi\in \mathcal{U}_\epsilon\}$ for $g\in {G}$. Then from Theorem \ref{e}(iii) we have $\mathbb{P}_*$-a.s. for ${\xi}$-a.e. $g\in {G}$,
\[
\widetilde{\mathcal{U}}_\epsilon=\{\pi\in \Pi: \dim_H\pi \Phi\bar{\mu} >\beta(\pi_0)-\epsilon  \}\supseteq \mathcal{U}_\epsilon g.
\]
Since $\mathcal{U}_\epsilon$ has non-empty interior, we deduce that $\mathbb{P}_*$-a.s. $\widetilde{\mathcal{U}}_\epsilon=\Pi$ as required. \end{proof}

\begin{corollary}\label{c2}
If $G=SO(d,\mathbb{R})$, then $\mathbb{P}_*$-a.s.
\begin{equation}
\dim_H \pi\Phi\mu = \min(k, \dim_H\Phi\mu)  \quad  \text{ for all } \pi\in \Pi_{d,k}.\label{ineqbeta}
\end{equation}
Moreover, with  $\alpha$ and $\beta(\pi)$ as in Theorem \ref{thmed}(i),(ii),  $\beta(\pi)=\min(k, \alpha)$ for all $\pi\in \Pi_{d,k}.$
\end{corollary}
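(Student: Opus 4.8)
The plan is to obtain the identity by sandwiching $\dim_H\pi\Phi\mu$ between the almost surely constant lower bound furnished by Corollary \ref{c1} and an elementary upper bound, and then to pin down the common value using the classical projection theorem.

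First I would record what the hypothesis $G=SO(d,\mathbb{R})$ gives. This group is connected, so the standing assumption of Section \ref{dip} and the hypothesis of Corollary \ref{c1} are satisfied, and it acts transitively on $\Pi_{d,k}$, so for any fixed $\pi_0\in\Pi_{d,k}$ one has $\Pi=\pi_0 G=\Pi_{d,k}$. Moreover the push-forward of the Haar measure $\xi$ under $g\mapsto\pi_0 g$ is, up to a harmless post-composition with an isometry which does not change Hausdorff dimension, the natural $O(d)$-invariant probability measure on $\Pi_{d,k}$. With this in hand Corollary \ref{c1} gives that $\mathbb{P}_*$-a.s.\ $\dim_H\pi\Phi\mu\ge\beta(\pi_0)$ for all $\pi\in\Pi_{d,k}$. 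The matching upper bound is immediate: each $\pi$ is $1$-Lipschitz with $k$-dimensional range, so $\dim_H\pi\Phi\mu\le\min(k,\dim_H\Phi\mu)$, and by Theorem \ref{thmed}(i) we have $\dim_H\Phi\mu=\alpha$ $\mathbb{P}_*$-a.s.; hence $\dim_H\pi\Phi\mu\le\min(k,\alpha)$ for all $\pi$, and in particular $\beta(\pi_0)\le\min(k,\alpha)$.

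The crux is therefore to prove $\beta(\pi_0)\ge\min(k,\alpha)$. Since $\Phi\mu$ is $\mathbb{P}_*$-a.s.\ exact-dimensional of dimension $\alpha$, the Marstrand--Mattila projection theorem in its form for measures (the measure analogue of \eqref{a1}; see \cite{Mat95}) yields, for $\mathbb{P}_*$-a.e.\ realisation, that $\dim_H(\pi_0 g)\Phi\mu=\min(k,\alpha)$ for $\xi$-a.e.\ $g\in G$, after translating ``a.e.\ projection'' into ``a.e.\ $g$'' via the first paragraph. On the other hand Theorem \ref{thmed}(ii), applied with the fixed projection $\pi_0$, states that $\mathbb{P}_*$-a.s.\ and for $\xi$-a.e.\ $g$ the measure $(\pi_0 g)\Phi\mu$ is exact-dimensional of dimension $\beta(\pi_0)$. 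Intersecting these two $(\mathbb{P}_*\times\xi)$-full-measure events forces $\beta(\pi_0)=\min(k,\alpha)$; since $\pi_0$ was arbitrary this proves $\beta(\pi)=\min(k,\alpha)$ for every $\pi\in\Pi_{d,k}$. Feeding this back into the first paragraph gives $\mathbb{P}_*$-a.s.\ $\min(k,\alpha)\le\dim_H\pi\Phi\mu\le\min(k,\alpha)$ for all $\pi$, which is the asserted equality.

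The only genuinely non-formal point is the middle step: one must apply the classical projection theorem pathwise to the random measure $\Phi\mu$, verify that the Haar-measure quantifier on $SO(d,\mathbb{R})$ matches the invariant-measure quantifier on the Grassmannian (this is where connectedness and transitivity are used), and invoke Fubini on $\mathbb{P}_*\times\xi$ so that the two almost-everywhere identifications of $\dim_H(\pi_0 g)\Phi\mu$ may legitimately be compared. Everything else is bookkeeping around Corollary \ref{c1}, Theorem \ref{thmed}, and the behaviour of Hausdorff dimension under Lipschitz maps.
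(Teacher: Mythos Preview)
Your proof is correct and follows essentially the same route as the paper: invoke Corollary \ref{c1} for the uniform lower bound $\beta(\pi_0)$, use the trivial Lipschitz/range bound for the upper bound $\min(k,\alpha)$, and then identify $\beta(\pi_0)=\min(k,\alpha)$ by comparing Theorem \ref{thmed}(ii) with the Marstrand--Mattila almost-everywhere value. The only cosmetic differences are that the paper first observes $\beta(\pi)$ is constant directly from the Haar-invariance of the entropy formula (rather than deducing it a posteriori from $\beta(\pi_0)=\min(k,\alpha)$ for arbitrary $\pi_0$), and the paper phrases the Marstrand--Mattila step via the set version applied to sets $E$ with $\Phi\bar\mu(E)>0$ rather than the measure version you cite; since $\Phi\mu$ is exact-dimensional these are equivalent.
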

\begin{proof}
If $G=SO(d,\mathbb{R})$, then $\pi_0 G=\Pi_{d,k}$ and for any $\pi\in \Pi_{d,k}$ there exists $g\in G$ such that $\pi_0g=\pi$. Due to the invariance of Haar meausres this implies that $\beta(\pi)=\beta(\pi_0)$ for all $\pi\in \Pi_{d,k}$, thus a constant. Then by Corollory \ref{c1} we get that $\mathbb{P}_*$-a.s. $\dim_H \pi\Phi\mu\geq \beta(\pi_0)$ for all  $\pi\in \Pi_{d,k}$, with equality for almost all $\pi$  by Theorem \ref{thmed}(ii). 
From the definition of dimension of measures (\ref{dimmes}), and applying the projection theorems of Marstrand \cite{Mar54} and Mattila \cite{Mat75} to sets $E$ with $\Phi\bar{\mu}(E) >0$ and $\dim_H E > \dim_H \Phi\bar{\mu} - \epsilon$, for $\epsilon >0$, it follows that $\mathbb{P}_*$-a.s. $\dim_H \pi\Phi\mu \leq \min(k, \dim_H \Phi\mu)=\min(k,\alpha)$ for all $ \pi\in \Pi_{d,k}$ with equality for a.a. $ \pi\in \Pi_{d,k}$. The conclusions follow.
\end{proof}

As in \cite{HoSh12} results on projections may be generalized to 
$C^1$-maps  without singular points, that is $C^1$-maps for which the derivative matrix is everywhere non-singular.
\begin{proposition}\label{pro}
Let $\pi\in \Pi=\pi_0 G$. For all $C^1$-maps $h:B(0,R)\mapsto \mathbb{R}^k$ such that $\sup_{x\in K}\|D_xh -\pi\|<c\rho^q$, we have that $\mathbb{P}_*$-a.s. for ${\xi}$-a.e. $g \in G$,
\[
\dim_H h g\Phi\bar{\mu}\ge E_q(\pi)-O(1/q),
\]
where the constant in $O(1/q)$ only depends on $\rho$, $c$, $R$ and $k$.
\end{proposition}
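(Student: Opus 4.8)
The plan is to rerun the proof of Theorem~\ref{eq}, replacing the linear projection $\pi g\Phi_q$ everywhere by the $C^1$-map $hg\Phi_q:(\Lambda_q^{\mathbb{N}},d_{\rho^q})\to\mathbb{R}^k$. The point of the hypothesis $\sup_x\|D_xh-\pi\|<c\rho^q$ is that it makes the $r$-scaling entropies of the $h$-images match those of the $\pi$-images up to a bounded additive error at all the scales that feed into the Birkhoff-type sum, while leaving the $\rho^q$-tree factorisation and the faithfulness estimates of \cite{HoSh12} unaffected.

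First I would set up the geometry. Since $h$ is $C^1$ on the convex ball $B(0,R)$ with $\|D_xh-\pi\|<c\rho^q<1$ on the region where it is evaluated, $h$ is Lipschitz there with constant $<2$, so $hg\Phi_q$ is $2R$-Lipschitz; hence by \cite[Theorem~5.4]{HoSh12} it factors as $hg\Phi_q=f'\varphi$ through a $\rho^q$-tree, with $\varphi$ a tree morphism and $f'$ being $C$-faithful for a constant $C$ depending only on $R$ and $k$ — just as for $\pi g\Phi_q$. Second, for $\underline{i}\in\Lambda_q^{\mathbb{N}}$ and $n\ge1$ the set $gf_{\underline{i}|_n}(K)$ has diameter at most $2Rr_{\underline{i}|_n}$, so the mean value inequality produces a constant vector $b$ with $\|h(x)-(\pi x+b)\|\le 2Rr_{\underline{i}|_n}c\rho^q$ on it; since the support of $\bar{\mu}_{q,[\underline{i}|_n]}$ lies in the cylinder $[\underline{i}|_n]$, which $g\Phi_q$ maps into $gf_{\underline{i}|_n}(K)$, the map $hg\Phi_q$ agrees there with $x\mapsto\pi x+b$ up to $2Rr_{\underline{i}|_n}c\rho^q$.

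Now I would reproduce the chain of identities from the proof of Theorem~\ref{eq}. Using $\chi_{\{Q_{\underline{i}|_n}>0\}}\bar{\mu}_q^{[\underline{i}|_n]}=\sigma^n\bar{\mu}_{q,[\underline{i}|_n]}$, the geometric rescaling of Lemmas~\ref{bphi} and~\ref{bpiphi}, the affine approximation above, and the standard fact that if $\nu'$ is obtained from $\nu$ by moving mass a distance at most $\epsilon r$ then $|H_r(\nu)-H_r(\nu')|$ is bounded by a constant depending only on $\epsilon$ and $d$ (here, after dilating by $r_{\underline{i}|_n}^{-1}$, the perturbation is the $n$-independent fraction $2Rc$ of the scale $\rho^q$), one gets, for every $n\ge1$,
\[
H_{(c\rho^q)^{n+1}}\!\big(hg\Phi_q\bar{\mu}_{q,[\underline{i}|_n]}\big)\ \ge\ H_{\rho^q r_{\underline{i}|_n}}\!\big(hg\Phi_q\bar{\mu}_{q,[\underline{i}|_n]}\big)\ \ge\ H_{\rho^q}\!\big(\pi g O_{\underline{i}|_n}\Phi_q\bar{\mu}_q^{[\underline{i}|_n]}\big)-O(1),
\]
the first step because $(c\rho^q)^{n+1}\le\rho^q r_{\underline{i}|_n}$ and the $O(1)$ depending only on $\rho,c,R,k$. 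Dividing by $q\log(1/\rho)$, averaging over $n=1,\dots,N$, and applying Proposition~\ref{unier} to continuous functions approximating $e_q(\pi,\cdot)$ from below (exactly as in Theorem~\ref{eq}) gives, $\mathbb{P}_*$-a.s.\ for $\xi_q$-a.e.\ $g$ and $\mu_q$-a.e.\ $\underline{i}$,
\[
\frac{1}{q\log(1/\rho)}\,\liminf_{N\to\infty}\frac1N\sum_{n=1}^N H_{(c\rho^q)^{n+1}}\!\big(hg\Phi_q\bar{\mu}_{q,[\underline{i}|_n]}\big)\ \ge\ E_q(\pi)-O(1/q).
\]
Then \cite[Proposition~5.3]{HoSh12} replaces $hg\Phi_q$ by $\varphi$ at cost $O(1/q)$, \cite[Theorem~4.4]{HoSh12} converts the Ces\`aro bound into $\dim_H\varphi\bar{\mu}_q\ge\frac{q\log(1/\rho)}{q\log(1/\rho)-\log c}E_q(\pi)-O(1/q)\ge E_q(\pi)-O(1/q)$ (using $E_q(\pi)\le k$), and \cite[Proposition~5.2]{HoSh12} applied to the $C$-faithful $f'$ gives $\dim_H hg\Phi\bar{\mu}=\dim_H f'\varphi\bar{\mu}_q\ge\dim_H\varphi\bar{\mu}_q$. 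Finally, since $G$ is connected, Lemma~\ref{gq} gives $G_q=G$ and $\xi_q=\xi$, so the bound holds for $\xi$-a.e.\ $g\in G$, as required; note that the exceptional set may be taken to depend only on $\pi$ and $q$, not on $h$.

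I expect the main obstacle to be the middle display: proving $H_{(c\rho^q)^{n+1}}(hg\Phi_q\bar{\mu}_{q,[\underline{i}|_n]})\ge H_{\rho^q}(\pi g O_{\underline{i}|_n}\Phi_q\bar{\mu}_q^{[\underline{i}|_n]})-O(1)$ with the $O(1)$ uniform in $n$. The delicate point is that the perturbation $h-\pi$, while only a $c\rho^q$-fraction of the diameter of each cylinder image, is much larger than the fine probing scale $(c\rho^q)^{n+1}$ once $n$ is large, so one cannot compare to $\pi$ directly at that scale; the remedy is to carry out the comparison at the coarser scale $\rho^q r_{\underline{i}|_n}$ — where monotonicity of $H_r$ loses nothing and, after dilating by $r_{\underline{i}|_n}^{-1}$, the perturbation is a fixed, $n$-independent fraction of the unit scale — and only then drop back to the fine scale by monotonicity. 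Checking that the nonlinearity of $h$ thereby contributes only a bounded additive amount to the scaling entropy, uniformly over all cylinders and scales, is exactly the place where the quantitative assumption $\sup_x\|D_xh-\pi\|<c\rho^q$ is used.
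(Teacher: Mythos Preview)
Your proposal is correct and spells out in detail what the paper merely indicates by writing ``The proof is similar to that of \cite[Proposition 8.4]{HoSh12}.'' Your adaptation of the proof of Theorem~\ref{eq} --- factoring $hg\Phi_q$ through a $\rho^q$-tree via \cite[Theorem 5.4]{HoSh12}, replacing the entropy averages of the linear images by those of the $C^1$ image, and controlling the discrepancy via the mean value inequality at the coarse scale $\rho^q r_{\underline{i}|_n}$ (where the perturbation is an $n$-independent fraction of the probing scale) before invoking \cite[Proposition 5.3, Theorem 4.4, Proposition 5.2]{HoSh12} --- is precisely the intended route, and your identification of the entropy comparison as the only nontrivial new step is accurate.
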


\begin{proof}
The proof is similar to that of \cite[Proposition 8.4]{HoSh12}.
\end{proof}

\begin{corollary}\label{non}
If $G=SO(d,\mathbb{R})$, then $\mathbb{P}_*$-a.s., for all $C^1$-maps $h:K\mapsto \mathbb{R}^k$ without singular points,
\begin{equation}
\dim_H h\Phi\mu=\min(k,\dim_H \Phi\mu).\label{eqnb}
\end{equation}
\end{corollary}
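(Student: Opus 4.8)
The upper bound is elementary and holds deterministically for every $h$: since $K$ is compact and $h\in C^1$, $h$ is Lipschitz on $K$, so $\dim_H h\Phi\mu\le\dim_H\Phi\mu$, and as $h\Phi\mu$ is carried by $\mathbb R^k$ we also get $\dim_H h\Phi\mu\le k$. The task is therefore to prove, $\mathbb P_*$-a.s.\ and \emph{simultaneously for all} $C^1$ maps $h$ without singular points, the matching lower bound $\dim_H h\Phi\mu\ge\min(k,\dim_H\Phi\mu)$, which by Corollary \ref{c2} equals $\min(k,\alpha)=\beta(\pi_0)$ for any fixed $\pi_0\in\Pi_{d,k}$. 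My plan is a localisation argument: on a deep cut-set of pieces of $K$ the map $h$, after rescaling and an invertible linear change of coordinates, becomes $C^1$-close to a linear orthogonal projection, and Proposition \ref{pro} is applied there. One cannot simply apply Corollary \ref{c2} on each piece, because $h$ maps into a lower-dimensional space and is badly non-injective, so ``$h$ is locally close to an affine projection'' does not by itself preserve Hausdorff dimension.

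\textbf{Localisation.} First extend $h$ to a $C^1$ map without singular points on a neighbourhood of $K$, hence on $B(0,R)$ (possible since $K$ is compact and non-singularity is an open condition). As $D_xh$ has rank $k$ on $K$, write $D_xh=A_x\pi_x$ with $x\mapsto\pi_x\in\Pi_{d,k}$ and $x\mapsto A_x\in GL_k(\mathbb R)$ continuous and $\sup_{x\in K}\|A_x^{-1}\|=:M<\infty$. Fix $\epsilon>0$ and a target parameter $q_2$; using uniform continuity of $Dh$, choose $q_1$ so large that $M$ times the modulus of continuity of $Dh$ at scale $2R\rho^{q_1}$ is $<c\rho^{q_2}$. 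For $i\in\Lambda_{q_1}$ pick $y_i\in f_i(K)$ and write a singular value decomposition $\pi_{y_i}O_i=V_i\pi_i'$ of this rank-$k$ partial isometry, with $V_i\in O(k)$ and $\pi_i'\in\Pi_{d,k}$. Then $\hat h_i:=r_i^{-1}V_i^{-1}A_{y_i}^{-1}(h\circ f_i)$ is defined on $B(0,R)$ and satisfies $\sup_{x\in K}\|D_x\hat h_i-\pi_i'\|<c\rho^{q_2}$. Since $\Lambda_{q_1}$ is a cut-set, $\Phi\mu=\sum_{i\in\Lambda_{q_1}}\mu([i])\,f_i\Phi\bar\mu^{[i]}$, and because $h\circ f_i=(r_iA_{y_i}V_i)\circ\hat h_i$ with $r_iA_{y_i}V_i$ invertible, pushing the decomposition forward by $h$ and using that the dimension of a finite sum of measures is the minimum of the summand dimensions gives
\[
\dim_H h\Phi\mu=\min_{i:\,\mu([i])>0}\dim_H h\bigl(f_i\Phi\bar\mu^{[i]}\bigr)=\min_{i:\,\mu([i])>0}\dim_H \hat h_i\Phi\bar\mu^{[i]}.
\]

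\textbf{Applying Proposition \ref{pro} and passing to the limit.} Next apply Proposition \ref{pro} to $\hat h_i$ with $\pi=\pi_i'\in\Pi=\Pi_{d,k}$ and parameter $q_2$. Three points make this legitimate: (i) $\bar\mu^{[i]}$ has the same law as $\bar\mu$ conditional on non-triviality, so the conclusion holds $\mathbb P_*$-a.s.\ on $\{\mu([i])>0\}$ with $\bar\mu^{[i]}$ in place of $\bar\mu$; (ii) since $G=SO(d,\mathbb R)$ is connected, inspecting the proof (through Proposition \ref{unier} and the dense-orbit/ergodicity properties of the skew product $T_\phi$ established in Proposition \ref{gee}) shows the conclusion holds not merely for $\xi$-a.e.\ $g$ but for every $g\in G$, in particular $g=\mathrm{id}$, the exceptional $g$-set collapsing once $\underline i$ is a generic base point; (iii) the resulting full-measure event depends on neither $\hat h_i$ nor $\pi_i'$, being produced entirely by the ergodic averages of Proposition \ref{unier}, with all $C^1$-map dependence sitting in the deterministic $\rho$-tree/faithful-map estimates of \cite{HoSh12} whose constants depend only on $R,k,\rho,c$. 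Intersecting over $q_2\in\mathbb N$ yields a single full-measure event on which, for every $q_2$, every $\pi\in\Pi_{d,k}$ and every $C^1$ map $\hat h\colon B(0,R)\to\mathbb R^k$ with $\sup_K\|D\hat h-\pi\|<c\rho^{q_2}$, one has $\dim_H\hat h\Phi\bar\mu\ge E_{q_2}(\pi)-O(1/q_2)$. Finally, by Theorem \ref{e} together with Corollary \ref{c2}, $E(\pi)=\lim_q E_q(\pi)=\min(k,\alpha)$ for all $\pi\in\Pi_{d,k}$; running the lower-semicontinuity/covering argument from the proofs of Theorem \ref{e} and Corollary \ref{c1} one covers the compact space $\Pi_{d,k}$ by finitely many open sets $\mathcal U(\pi^{(\ell)})$ with parameters $q^{(\ell)}$ such that $\tfrac{q^{(\ell)}\log(1/\rho)}{q^{(\ell)}\log(1/\rho)-\log c}E_{q^{(\ell)}}(\pi')-O(1/q^{(\ell)})\ge\min(k,\alpha)-\epsilon$ for all $\pi'\in\mathcal U(\pi^{(\ell)})$. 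Carrying out the construction above with $q_2=q^{(\ell(i))}$ where $\pi_i'\in\mathcal U(\pi^{(\ell(i))})$ (and $q_1$ correspondingly large) gives $\dim_H\hat h_i\Phi\bar\mu^{[i]}\ge\min(k,\alpha)-\epsilon$ for each surviving $i$, hence $\dim_H h\Phi\mu\ge\min(k,\alpha)-\epsilon$ on this event, uniformly in $h$; letting $\epsilon\downarrow0$ along a sequence and combining with the upper bound finishes the proof.

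\textbf{Main obstacle.} The genuine difficulty is not the (admittedly heavy) bookkeeping but the fact that $h$ maps into a lower-dimensional space and is therefore highly non-injective: ``$h$ restricted to a small piece is $C^1$-close to an affine projection'' does \emph{not} by itself transfer a Hausdorff-dimension lower bound, since the map might collapse the measure, and this is exactly why one cannot argue directly from Corollary \ref{c2}. That gap is precisely what Proposition \ref{pro}, i.e.\ the $\rho$-tree and faithful-map technology imported from \cite{HoSh12}, is designed to bridge. The two subsidiary technical points — making the exceptional null set uniform over all $C^1$ maps $h$, and replacing the Haar-generic rotation in Proposition \ref{pro} by the identity — are comparatively routine once one exploits the ergodicity and equidistribution of the skew product $T_\phi$ from Propositions \ref{gee}–\ref{unier}, which is available because $G$ is connected.
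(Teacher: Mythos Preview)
Your proof is correct and follows the same route the paper's one-line argument gestures at: the paper records that $E(\pi)=\min(k,\alpha)$ is constant (Corollary~\ref{c2} with Theorem~\ref{e}(ii)) and then defers the localisation to small cylinders and the quantifier-swap to Proposition~\ref{pro} and the corresponding argument in \cite{HoSh12}, exactly the details you have spelled out. Your claim~(ii), upgrading ``$\xi$-a.e.\ $g$'' to ``every $g$'', is correct but is the one step worth justifying more explicitly: once the conclusion of Proposition~\ref{unier} holds at some $g_0$ for \emph{all} $h\in C(\mathcal M)$, it holds at every $g$ via the substitution $h\mapsto h\circ(gg_0^{-1})\in C(\mathcal M)$ together with right-invariance of Haar measure for the limit.
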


\begin{proof}
Corollary \ref{c2} together with Theorem \ref{e}(ii) yields that $E(\pi)=\min(k,\dim_H \Phi\mu)=\min(k,\alpha)$ is a constant for all $\pi\in\Pi_{d,k}$, and it is the maximum possible value since $h$ is a $C^1$ map. The result follows from Proposition \ref{pro}. 
\end{proof}

\section{Applications to self-similar sets}
\setcounter{theorem}{0}

Random cascade measures include non-random measures as a special case, so we can apply our results to the fractal geometry of deterministic self-similar sets. In this section we consider an IFS  $ \mathcal{I}$ of similarities on $\mathbb{R}^d$  (\ref{IFS}) with rotation group $G=\overline{\langle O_i:i\in \Lambda\rangle}= SO(d,\mathbb{R})$, with {\it self-similar attractor} $K$ the unique non-empty compact subset  of $\mathbb{R}^d$ satisfying 
$K = \cup_{i=1}^{m}f_i(K)$. 
Recall that $\mathcal{I}$ satisfies the \textit{strong separation condition} (SSC) if this union is disjoint and satisfies the  \textit{open set condition} (OSC) if there is a non-empty open set $V$ such that 
$V \subseteq \cup_{i=1}^{m}f_i(V)$ with this union disjoint. If either SSC or OSC are satisfied then
\begin{equation}
 \dim_H K = s \quad \mbox{ where } \sum_{i=1}^m r_i^s =1. \label{ssdim} 
\end{equation}

To  transfer our results to sets we need to ensure that the sets support suitable measures. From the definitions, if a probability measure $\nu$ is supported by a compact set $K$   then $\dim_H \nu\leq \dim_H K$. We say that an IFS  $\mathcal{I}$ with self-similar attractor $K$ satisfies the \textit{strong variational principle} if 
there is a Bernoulli probability measure $\mu$  on $\Lambda^{\mathbb{N}}$ such that 
$\dim_H \Phi\mu=\dim_H K$. No self-similar set with $G= SO(d,\mathbb{R})$ which does not satisfy the strong variational principle is known, and in particular the principle holds in the cases  described in the following lemma. 

\begin{lemma}\label{ifsmes}
(a)  If the IFS $ \mathcal{I}$ satisfies the open set (or strong separation) condition then $ \mathcal{I}$ satisfies the strong variational principle.

\noindent (b) Given  $0<r_i<\frac{1}{2}$ and $O_i$, the IFS $ \mathcal{I}$ in   (\ref{IFS}) satisfies the strong variational principle for  almost all $(t_1,\ldots,t_m)$ in the sense of $md$-dimensional Lebesgue measure.

\end{lemma}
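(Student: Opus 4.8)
The plan is to handle the two parts separately: part (a) is a short appeal to the classical theory of self-similar measures, and part (b) is a transversality (potential-theoretic) argument in the translation parameter.

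\emph{Part (a).} Assume the open set condition (the strong separation condition being a special case of it). Then $\dim_H K = s$, the similarity dimension fixed by $\sum_{i=1}^m r_i^s = 1$. I take $\mu=\mu_p$, the Bernoulli measure on $\Lambda^{\mathbb{N}}$ with probability vector $p_i = r_i^s$, so $\mu_p([i]) = r_i^s$ for every $i\in\Lambda^*$ and the symbolic local dimension is the constant $\big(\sum_i p_i\log p_i\big)\big/\big(\sum_i p_i\log r_i\big) = s$. Under the open set condition the set where $\Phi$ fails to be essentially injective is $\mu_p$-null, so this local dimension passes to $\Phi\mu_p$, giving $\dim_H\Phi\mu_p = s = \dim_H K$ (the classical Hutchinson computation, see \cite{Falconer03}); hence $\mathcal{I}$ satisfies the strong variational principle.

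\emph{Part (b).} Fix $r_i\in(0,\tfrac12)$ and the rotations $O_i$; put $\rho=\max_i r_i$, let $s$ again be the similarity dimension, and use the same $\mu_p$, $p_i=r_i^s$. For $t=(t_1,\dots,t_m)\in\mathbb{R}^{md}$ let $\Phi_t$ be the canonical projection of the IFS with translations $t$ and $K_t$ its attractor. Since $\Phi_t\mu_p$ is carried by $K_t$ and $\dim_H K_t\le\min(d,s)$ always (the standard covering bound together with $\dim_H K_t\le d$), it suffices to show that for each $\alpha<\min(d,s)$ one has $\dim_H\Phi_t\mu_p\ge\alpha$ for Lebesgue-a.e.\ $t$; letting $\alpha$ increase to $\min(d,s)$ along a sequence then forces $\dim_H\Phi_t\mu_p=\min(d,s)=\dim_H K_t$ a.e., which is the strong variational principle. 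For the lower bound I would use the energy criterion: it is enough that $I_\alpha(\Phi_t\mu_p)=\iint|x-y|^{-\alpha}\,d\Phi_t\mu_p(x)\,d\Phi_t\mu_p(y)<\infty$ for a.e.\ $t$. Integrating over a ball $B=\{|t_i|\le L\}$ and using Tonelli,
\[
\int_B I_\alpha(\Phi_t\mu_p)\,dt=\iint_{\Lambda^{\mathbb{N}}\times\Lambda^{\mathbb{N}}}\Big(\int_B\big|\Phi_t(\underline{i})-\Phi_t(\underline{j})\big|^{-\alpha}\,dt\Big)\,d\mu_p(\underline{i})\,d\mu_p(\underline{j}),
\]
the diagonal being $\mu_p\times\mu_p$-null since $\mu_p$ is non-atomic, so everything reduces to a uniform estimate for the inner transversality integral.

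\emph{The transversality estimate and the energy sum.} From $\Phi_t(\underline{i})=\sum_{k\ge1}r_{\underline{i}|_{k-1}}O_{\underline{i}|_{k-1}}t_{i_k}$ the map $t\mapsto\Phi_t(\underline{i})$ is linear. Fix $\underline{i}\ne\underline{j}$ agreeing exactly up to level $n$, say $\underline{i}=wa\underline{i}'$, $\underline{j}=wb\underline{j}'$ with $w\in\Lambda^n$ and $a\ne b$. Then the terms indexed by positions $\le n$ cancel in $L_{\underline{i},\underline{j}}(t):=\Phi_t(\underline{i})-\Phi_t(\underline{j})$, which is linear from $\mathbb{R}^{md}$ to $\mathbb{R}^d$; its partial derivative in the block $t_a$ is $r_wO_w+E$, the leading term $r_wO_w$ coming from position $n+1$ of $\underline{i}$ and the error $E$ collecting the later occurrences of the symbol $a$ in $\underline{i}$ and $\underline{j}$, each of which carries at least one extra contraction factor $\le\rho$. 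Here is where $r_i<\tfrac12$ enters: the contributions in $E$ are dominated by the leading term, and (using that each extra factor is $\le\rho<\tfrac12$, and where necessary combining the $t_a$- and $t_b$-blocks) one obtains that $L_{\underline{i},\underline{j}}$ is onto with least non-zero singular value $\ge c\,r_w$ for some $c=c(d,\rho)>0$. Consequently $\{\,|L_{\underline{i},\underline{j}}(t)|<\epsilon\,\}\cap B$ lies in a $(c r_w)^{-1}\epsilon$-neighbourhood of the $(md-d)$-dimensional subspace $\ker L_{\underline{i},\underline{j}}$, of Lebesgue measure $\lesssim(r_w^{-1}\epsilon)^d$, so $\int_B|L_{\underline{i},\underline{j}}(t)|^{-\alpha}\,dt\lesssim r_w^{-\alpha}$ for all $\alpha<d$, with constant independent of the pair. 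Plugging this in and splitting by the agreement level,
\[
\int_B I_\alpha(\Phi_t\mu_p)\,dt\;\lesssim\;\sum_{n\ge0}\sum_{w\in\Lambda^n}\mu_p([w])^2\,r_w^{-\alpha}\;=\;\sum_{n\ge0}\Big(\sum_{i=1}^m r_i^{\,2s-\alpha}\Big)^{n},
\]
which is finite precisely when $\sum_i r_i^{2s-\alpha}<1$, i.e.\ when $\alpha<s$ (recall $\beta\mapsto\sum_i r_i^\beta$ is strictly decreasing and equals $1$ at $\beta=s$). Hence for each $\alpha<\min(d,s)$ the energy is finite for a.e.\ $t\in B$, and since $\mathbb{R}^{md}=\bigcup_L B$ we get $\dim_H\Phi_t\mu_p\ge\alpha$ for a.e.\ $t$, completing (b).

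\emph{Main obstacle.} The one genuinely nontrivial point is the transversality estimate: showing that $L_{\underline{i},\underline{j}}$ is non-degenerate with least singular value comparable to $r_w$, uniformly over all pairs, and in particular that this is already true under $r_i<\tfrac12$. A crude accounting of the tail matrices only gives a weaker threshold (around $\tfrac13$), and for $d\ge2$ the rotations $O_i$ destroy the one-signed cancellation available in the scalar case, so the careful bookkeeping of the error term $E$ is where the work lies. Everything else — the reduction to energies, the Tonelli interchange, and the convergence of the resulting geometric series — is routine once this estimate is in place; that randomness and $C^1$-images are absent here causes no difficulty, since (b) concerns a deterministic self-similar measure on a deterministic self-similar set.
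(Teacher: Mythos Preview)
Your approach is correct and is essentially the same as the paper's. For (a) you use the natural Bernoulli measure $p_i=r_i^s$ exactly as the paper does; for (b) the paper simply writes ``This follows by applying to self-similar sets the argument used in \cite{Fa86} \ldots\ integrating the $t$-energy of the image measures $\Phi\mu$ over a parameterized family'', and what you have written is precisely that argument expanded, with the transversality estimate correctly isolated as the only substantive step (including the observation that a na\"ive bound on the tail gives the $1/3$ threshold and that one must combine the $t_a$- and $t_b$-blocks to reach $1/2$).
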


\begin{proof}
(a) With $s$ given by (\ref{ssdim}), the Bernoulli probability measure $\mu$  on $\Lambda^{\mathbb{N}}$, defined by 
\begin{equation}
\mu^{[\emptyset]}([i]) = r_i^s \quad (i=1,\ldots,m), \label{bermes} 
\end{equation}
has  $\dim_H \Phi\mu=\dim_H K$. This fact is the key step in showing that $\dim_H K = s$ when OSC holds, see for example \cite{Hut81}.

(b) This follows by applying to self-similar sets the argument used in \cite{Fa86} to find the almost sure dimension of self-affine sets. With $\mu$ as in (\ref{bermes}), integrating the $t$-energy of the image measures $\Phi \mu$ over a parameterized family of self-similar sets  gives that the energy is bounded for almost all $(t_1,\ldots,t_m)$ for all  $t<s$, so that $\dim_H K = s$ for almost all $(t_1,\ldots,t_m)$. 
\end{proof}

The following two corollaries, obtained by applying Corollaries \ref{c2} and  \ref{non} to self-similar sets, weaken the conditions that guarantee the dimensions of projections and images from those of  \cite{HoSh12} to just  the strong variational principle.

\begin{corollary}\label{projss}
Let $K$ be the self-similar attractor of an IFS $ \mathcal{I}$ with rotation group $ SO(d,\mathbb{R})$  such that the strong variational principle is satisfied. Then
\[
\dim_H \pi K=\min(k,\dim_H K) \text{ for all } \pi\in \Pi_{d,k}.
\]
 \end{corollary}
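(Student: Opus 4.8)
The plan is to deduce the statement from Corollary \ref{c2} by exhibiting a single Bernoulli measure on $K$ of full dimension whose projections are completely controlled by Section \ref{dip}, and then to pass from the measure to the set. Before doing so it is convenient to dispose of the degenerate case $\dim_H K = 0$: here $\min(k,\dim_H K)=0$, and since every $\pi\in\Pi_{d,k}$ is Lipschitz we have $\dim_H\pi K\le\dim_H K=0$, so the conclusion is immediate. Thus we may assume $\dim_H K>0$.

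By the strong variational principle there is a Bernoulli probability measure $\mu$ on $\Lambda^{\mathbb{N}}$, corresponding to some probability vector $(p_1,\dots,p_m)$, with $\dim_H\Phi\mu=\dim_H K$. I would first check that $\mu$ fits the framework of Section 2.3 as a deterministic random multiplicative cascade measure, taking the (deterministic) weight vector $W=(p_1,\dots,p_m)$: then $\sum_i\mathbb{E}(W_i)=1$; assumption (a0) holds since $\dim_H K>0$ forces $\mu$ not to be a point mass, so at least two $p_i$ are strictly positive; and assumption (a1) holds since the function $p\mapsto\sum_{i=1}^m p_i^{\,p}$ equals $1$ at $p=1$ with strictly negative derivative $\sum_{i=1}^m p_i\log p_i$ there, hence is $<1$ for $p>1$ close to $1$. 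As the cascade is deterministic, $\|\mu\|=Y=1$ identically, so $\mathbb{P}_*=\mathbb{P}$ and every ``$\mathbb{P}_*$-a.s.'' conclusion of the earlier sections holds for this $\mu$ without exception.

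Now I would apply Corollary \ref{c2} with $G=SO(d,\mathbb{R})$, obtaining
\[
\dim_H\pi\Phi\mu=\min(k,\dim_H\Phi\mu)=\min(k,\dim_H K)\qquad\text{for all }\pi\in\Pi_{d,k}.
\]
Since $\pi\Phi\mu$ is a probability measure carried by $\pi K$, the definition \eqref{dimmes} of the dimension of a measure gives $\dim_H\pi K\ge\dim_H\pi\Phi\mu=\min(k,\dim_H K)$ for every $\pi$. Conversely $\pi K$ lies in a $k$-dimensional plane, so $\dim_H\pi K\le k$, while $\pi$ being Lipschitz gives $\dim_H\pi K\le\dim_H K$; hence $\dim_H\pi K\le\min(k,\dim_H K)$. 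Combining the two bounds completes the proof.

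I do not anticipate any genuine obstacle: the entire weight of the argument rests on Theorem \ref{thmed}, Theorem \ref{e} and Corollary \ref{c2}. The only points needing a little care are the verification that the chosen Bernoulli measure genuinely satisfies the cascade hypotheses (a0)--(a1), and the harmless separate treatment of the case $\dim_H K=0$, both handled above.
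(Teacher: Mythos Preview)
Your proof is correct and follows exactly the approach the paper indicates: the paper states just before Corollary~\ref{projss} that it is ``obtained by applying Corollary~\ref{c2}'' to a Bernoulli measure supplied by the strong variational principle, and you have simply filled in the routine details (verification of (a0)--(a1), the passage from $\dim_H\pi\Phi\mu$ to $\dim_H\pi K$, and the degenerate case $\dim_H K=0$) that the paper leaves implicit. Nothing further is needed.
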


\begin{corollary}\label{mapsss}
Let $K$ be the self-similar attractor of an IFS $ \mathcal{I}$  with rotation group $SO(d,\mathbb{R})$  such that the strong variational principle is satisfied. Then for all $C^1$-maps $h:K\to \mathbb{R}^k$ without singular points
\[
\dim_H h( K) =\min(k,\dim_H K).
\]
 \end{corollary}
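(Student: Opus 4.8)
\emph{Proof plan.} The strategy is to produce a single deterministic measure on $K$ of full dimension, apply Corollary \ref{non} to it, and then squeeze $\dim_H h(K)$ between matching lower and upper bounds. We may assume $\dim_H K>0$, since if $\dim_H K=0$ the assertion reduces to $\dim_H h(K)=0$, which is immediate because $h$ is Lipschitz on the compact set $K$.

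First I would invoke the strong variational principle: by hypothesis there is a Bernoulli probability measure $\mu$ on $\Lambda^{\mathbb{N}}$, given by a probability vector $p=(p_1,\dots,p_m)$, with $\dim_H\Phi\mu=\dim_H K$. Dropping any symbols $i$ with $p_i=0$ (such symbols never occur $\mu$-a.s. and do not affect $\Phi\mu$), we may take all $p_i\in(0,1)$, and since $\dim_H\Phi\mu=\dim_H K>0$ at least two of the weights are positive, so $m\ge 2$. This $\mu$ is a random multiplicative cascade measure in the sense of Section 2.3 with the \emph{deterministic} random vector $W=p$: assumption {\bf(a0)} holds since $\#\{i:p_i>0\}=m\ge 2$, and {\bf(a1)} holds because $t\mapsto\sum_i p_i^{\,t}$ equals $1$ at $t=1$ and has derivative $\sum_i p_i^{\,t}\log p_i<0$ there, so $\sum_i p_i^{\,t}<1$ for some $t>1$. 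Because $W$ is deterministic we have $\|\mu\|=1$ surely, the conditional probability $\mathbb{P}_*$ is trivial, and every ``$\mathbb{P}_*$-a.s.'' statement holds unconditionally.

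Next, since the rotation group of $\mathcal{I}$ is $SO(d,\mathbb{R})$ by assumption, Corollary \ref{non} applies to $\mu$ and yields that for every $C^1$-map $h:K\to\mathbb{R}^k$ without singular points,
\[
\dim_H h\Phi\mu=\min(k,\dim_H\Phi\mu)=\min(k,\dim_H K).
\]
The lower bound for the set then follows at once: $h\Phi\mu$ is a probability measure carried by $h(K)$, so by the definition \eqref{dimmes} of the dimension of a measure, $\dim_H h(K)\ge\dim_H h\Phi\mu=\min(k,\dim_H K)$. For the reverse inequality, $h$ is $C^1$ on a neighbourhood of the compact set $K$ and hence Lipschitz on $K$, so $\dim_H h(K)\le\dim_H K$; and trivially $\dim_H h(K)\le k$ because $h(K)\subseteq\mathbb{R}^k$. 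Combining gives $\dim_H h(K)\le\min(k,\dim_H K)$, which together with the lower bound completes the proof.

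I do not expect a genuine obstacle here: all of the analytic content is already packaged in Corollary \ref{non} (which rests on Theorem \ref{thmed}, Theorem \ref{e} and the $C^1$-perturbation Proposition \ref{pro}). The only points needing a little care are verifying that the Bernoulli measure supplied by the strong variational principle genuinely satisfies {\bf(a0)}--{\bf(a1)}, so that it is a legitimate (degenerate) random cascade measure to which Corollary \ref{non} applies, and the routine bookkeeping that ``$\mathbb{P}_*$-a.s.'' collapses to an unconditional statement in the deterministic case.
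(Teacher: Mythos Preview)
Your approach is exactly the paper's: treat the Bernoulli measure supplied by the strong variational principle as a degenerate random cascade, apply Corollary~\ref{non} to it, and sandwich $\dim_H h(K)$ between the resulting lower bound and the trivial Lipschitz/ambient upper bound.

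One step should be corrected, however. When you ``drop any symbols $i$ with $p_i=0$'' you are passing to the sub-IFS $\{f_i:p_i>0\}$, and the rotation group generated by the remaining $O_i$ may be a proper closed subgroup of $SO(d,\mathbb{R})$; in that case the hypothesis of Corollary~\ref{non} fails for the reduced system. The fix is simply to keep the full alphabet $\Lambda$. With $W=p$ deterministic and at least two $p_i>0$ (which, as you observe, follows from $\dim_H\Phi\mu>0$), both assumptions hold on $\Lambda$ itself: {\bf(a0)} is immediate, and for {\bf(a1)} note that each positive $p_i$ is strictly less than $1$, so $\sum_{i\in\Lambda}p_i^{\,t}=\sum_{p_i>0}p_i^{\,t}<\sum_{p_i>0}p_i=1$ for every $t>1$. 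Since the rotation group of the \emph{original} IFS is $SO(d,\mathbb{R})$ by hypothesis, Corollary~\ref{non} applies directly and the rest of your argument goes through unchanged.
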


The {\it distance set} of $A \subseteq \mathbb{R}^d$ is defined as $D(A) = \{|x-y|: x,y \in A\}$ and the {\it pinned distance set} of $A$ at $a$ is  $D_a(A) = \{|x-a|: x \in A\}$. A general open problem is to relate the Hausdorff dimensions and Lebesgue measures of $D(A)$ and $D_a(A)$ to that of $A$.
For self-similar sets in the plane, Orponen  \cite{Or12} showed that if $\dim_H K> 1$ then $ \dim_H D(K) = 1$. We have the following variant.

\begin{corollary}\label{distA}
Let $K$ be the self-similar attractor of an IFS $ \mathcal{I}$ with rotation group $SO(d,\mathbb{R})$  such that the strong variational principle is satisfied. Then there exists $a \in K$ such that
\[
\min(1,\dim_H K)=\dim_H D_a(K)\leq \dim_H D(K) \leq 1.
\]
\end{corollary}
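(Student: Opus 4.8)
The plan is to realize the pinned distance set $D_a(K)$ as (up to a Lipschitz reparametrization) a non-singular $C^1$-image of $K$, so that Corollary~\ref{mapsss} applies. First I would choose $a \in K$ so that $a \notin K$ only in the trivial sense we must avoid: the map $x \mapsto |x-a|$ is $C^1$ and non-singular precisely away from $a$, so I need a point $a \in K$ together with a sub-self-similar piece of $K$ that is bounded away from $a$. Concretely, pick any $i \in \Lambda$ and any $j \in \Lambda$ with $j \neq i$ (possible since $m \geq 2$); then $a := \Phi(iii\cdots) = f_i(a)$ lies in $K$, and $K_j := f_j(K)$ is a compact subset of $K$ with positive distance from $a$ provided the cylinders $[i]$ and $[j]$ have disjoint images — but without a separation condition this may fail. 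To handle the general case, I would instead argue that since $K$ is not a single point, there exist two points $x_0 \neq y_0$ in $K$; by the self-similar structure one can find a word $i$ so that $f_i(K)$ has small diameter and is contained in a ball disjoint from another cylinder set $f_j(K)$ at a definite distance — more carefully, pick $a \in K$ to be an isolated-enough point: take $a = \Phi(\underline{i})$ for a generic $\underline{i}$ and a cylinder $f_{i|_n}(K)$ of small diameter containing $a$; then $K$ contains points outside the ambient ball $B(a, \tfrac12\,\mathrm{diam}\,K)$ unless $K$ itself has diameter zero, and restricting attention to a cylinder $f_k(K)$ realizing such a far-away point gives a compact $K' \subseteq K$ with $\mathrm{dist}(a, K') > 0$.

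Next, observe that $D_a(K) \supseteq D_a(K') = h(K')$ where $h(x) = |x-a|$ restricted to a neighbourhood of $K'$; on this neighbourhood $h$ is $C^1$ with $D_x h = (x-a)/|x-a| \neq 0$, so $h$ has no singular points there. The map $h$ takes values in $\mathbb{R} = \mathbb{R}^1$, so $k = 1$. Now $K'$ is itself a self-similar set: it equals $f_k(K)$, which is the attractor of the conjugated IFS $\{f_k f_i f_k^{-1}\}_{i \in \Lambda}$, an IFS of similarities with the same contraction ratios and the same rotation matrices $O_i$, hence the same rotation group $G = SO(d,\mathbb{R})$; moreover the strong variational principle passes to $K'$ since a Bernoulli measure $\mu$ on $\Lambda^{\mathbb N}$ with $\dim_H \Phi\mu = \dim_H K$ pushes forward under $f_k$ to a Bernoulli measure on $K'$ with $\dim_H = \dim_H K' = \dim_H K$ (as $f_k$ is bi-Lipschitz). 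Therefore Corollary~\ref{mapsss} applies to $K'$ and the $C^1$ non-singular map $h$, giving $\dim_H D_a(K) \geq \dim_H h(K') = \min(1, \dim_H K') = \min(1, \dim_H K)$. The reverse inequality $\dim_H D_a(K) \leq \min(1,\dim_H K)$ is immediate since $D_a(K) \subseteq \mathbb{R}$ (so dimension $\leq 1$) and $h$ is Lipschitz (so $\dim_H D_a(K) \leq \dim_H K$); combined with $D_a(K) \subseteq D(K) \subseteq \mathbb{R}$ this yields the full chain $\min(1,\dim_H K) = \dim_H D_a(K) \leq \dim_H D(K) \leq 1$.

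The main obstacle is the first step: producing a point $a \in K$ at positive distance from a \emph{self-similar} sub-piece of $K$ without assuming any separation condition. The cleanest route is to note that $K$ is perfect and has more than one point (indeed, since $m \geq 2$ and by assumption on the rotation group the construction is non-degenerate — if $K$ were a single point the statement is vacuous as $\dim_H K = 0$ and $D_a(K) = \{0\}$), so $\mathrm{diam}\,K > 0$; fix $a \in K$, and by continuity of $\Phi$ choose $n$ large enough that the cylinder image $f_{a|_n}(K) \ni a$ has diameter less than $\tfrac13 \mathrm{diam}\,K$. Then some point $z \in K$ satisfies $|z - a| \geq \tfrac13\mathrm{diam}\,K$, and picking a first-level cylinder $f_\ell(K) \ni z$ of diameter less than $\tfrac1{12}\mathrm{diam}\,K$ (increase $n$ and pass to a deeper cylinder if needed) gives $K' := f_{\ell|_p}(K)$ with $\mathrm{dist}(a, K') \geq \tfrac14 \mathrm{diam}\,K > 0$, where $K'$ is the attractor of the conjugate IFS $\{f_{\ell|_p} \circ f_i \circ f_{\ell|_p}^{-1}\}_{i\in\Lambda}$. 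One should double-check that this conjugate IFS genuinely has rotation group $SO(d,\mathbb{R})$: conjugation by the similarity $f_{\ell|_p} = r\,O\,(\cdot) + t$ replaces each $O_i$ by $O O_i O^{-1}$, and the group generated by $\{OO_iO^{-1}\}$ is $O G O^{-1} = O\,SO(d,\mathbb{R})\,O^{-1} = SO(d,\mathbb{R})$ — so this is fine. Everything else is routine: bi-Lipschitz invariance of Hausdorff dimension, the elementary bounds on $D_a(K)$, and a direct appeal to Corollary~\ref{mapsss}.
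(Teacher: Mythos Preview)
Your proposal is correct and follows essentially the same approach as the paper: find a point $a \in K$ and a scaled copy of $K$ inside $K$ at positive distance from $a$, then apply Corollary~\ref{mapsss} to the non-singular $C^1$-map $x \mapsto |x-a|$ restricted to that copy. The paper's setup is simpler than yours --- since each $f_i$ is a strict contraction, $f_i(K)$ is a proper compact subset of $K$ whenever $K$ is not a singleton, so one may fix any $i \in \Lambda$ and then choose $a \in K \setminus f_i(K)$ directly, bypassing your deep-cylinder construction.
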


\begin{proof}
Take a point $a \in K$, and some $i \in \Lambda$ such that $a \notin f_i(K)$. Then  $f_i(K)$ is similar to $K$, so by scaling, Corollary \ref{mapsss} applies to $C^1$-maps $h:f_i(K)\to \mathbb{R}^k$. The mapping  $h: f_i(K) \to \mathbb{R}$ given by $h(x) = |x-a|$ is $C^1$ and  has no singular points, so applying Corollary \ref{mapsss} to $f_i(K)$ gives 
$$ \dim_H  \{|x-a|: x \in f_i(K)\} = \dim_H  \{h(f_i(K))\}= \min(1, \dim_H f_i(K)) = \min(1, \dim_H K)$$
since  $f_i(K)$ is similar to $K$. Since $a \in K$ and $f_i(K) \subseteq K$, $ \{|x-a|: x \in f_i(K)\} \subseteq D_a(K)$.
\end{proof}

Furstenberg \cite{Fur} showed that if  a self-similar set has finite rotation group finite and satisfies the SSC then all directions are dimension conserving. Here we can dispense with the separation condition.

\begin{corollary}\label{dimcon}
Let $K$ be the self-similar attractor of an IFS $ \mathcal{I}$ with finite rotation group  such that the strong variational principle is satisfied. Then every direction is dimension conserving, that is 
for all $\pi\in\Pi_{d,k}$ there is a number $\Delta >0$ such that 
\begin{equation}
\Delta + \dim_H \{y \in \mathbb{R}^{k}: \dim_H (K\cap \pi^{-1}y) \geq \Delta\} \geq \dim_H K \label{dimcon} 
\end{equation}
(we take $\dim \emptyset = - \infty$).
\end{corollary}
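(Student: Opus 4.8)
The plan is to derive the set‑theoretic dimension conservation (\ref{dimcon}) from the measure‑theoretic version already established in Corollary \ref{dim-con2}, by feeding in a Bernoulli measure that attains $\dim_H K$.

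First, using the strong variational principle, I would fix a Bernoulli probability measure $\mu$ on $\Lambda^{\mathbb{N}}$ with $\dim_H\Phi\mu=\dim_H K=:\alpha$. Then $\bar\mu=\mu$ is deterministic, and since the rotation group $G$ is finite, Corollary \ref{dim-con2} applies to the given $\pi\in\Pi_{d,k}$:
\[
\dim_H\pi\Phi\mu+\dim_H\Phi\mu_{y,\pi}=\dim_H\Phi\mu=\alpha\qquad\text{for }\pi\Phi\mu\text{-a.e. }y\in\pi(K),
\]
and by Theorem \ref{thmed}(ii),(iii) the measures $\pi\Phi\mu$ and $\Phi\mu_{y,\pi}$ are exact‑dimensional, with $\dim_H\Phi\mu_{y,\pi}$ equal for $\pi\Phi\mu$-a.e. $y$ to a single constant $\gamma(\pi)\ge 0$. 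I would then set $\Delta:=\gamma(\pi)$.

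The one point that needs care is relating these abstract conditional measures to slice geometry. By the construction of the fibre partition $\mathcal{P}_\pi$ and Rohlin's disintegration (Theorem \ref{Rohlin}), the measure $\Phi\mu_{y,\pi}$ is carried by $\Phi\big((\pi\Phi)^{-1}(y)\big)\subseteq K\cap\pi^{-1}(y)$, so for $\pi\Phi\mu$-a.e. $y$,
\[
\dim_H\big(K\cap\pi^{-1}y\big)\ \ge\ \dim_H\Phi\mu_{y,\pi}\ =\ \gamma(\pi)\ =\ \Delta .
\]
Consequently the set $A_\Delta:=\{\,y\in\mathbb{R}^k:\dim_H(K\cap\pi^{-1}y)\ge\Delta\,\}$ has full $\pi\Phi\mu$-measure; since $\pi\Phi\mu$ is a probability measure carried by $A_\Delta$, the definition (\ref{dimmes}) of measure dimension gives $\dim_H A_\Delta\ge\dim_H\pi\Phi\mu$. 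Adding the two displays, $\Delta+\dim_H A_\Delta\ge\dim_H\Phi\mu_{y,\pi}+\dim_H\pi\Phi\mu=\alpha=\dim_H K$, which is (\ref{dimcon}); here $\Delta=\gamma(\pi)>0$ precisely when $\pi$ strictly lowers the dimension of $\Phi\mu$, the remaining case $\gamma(\pi)=0$ being the dimension‑preserving one, for which $A_0=\pi(K)$ already has dimension $\alpha$.

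So the argument is short once Corollary \ref{dim-con2} and Theorem \ref{thmed} are available: the only genuinely substantive step is the identification of the support of the conditional (slice) measure $\Phi\mu_{y,\pi}$ with a subset of the geometric slice $K\cap\pi^{-1}y$, together with the transfer of the resulting almost‑everywhere dimension bound to a lower bound for $\dim_H A_\Delta$ via the variational definition of measure dimension. Everything else is bookkeeping.
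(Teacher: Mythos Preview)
Your approach is exactly the paper's: invoke Corollary~\ref{dim-con2} for a deterministic Bernoulli measure $\mu$ realizing $\dim_H K$ (via the strong variational principle) and take $\Delta=\dim_H\Phi\bar\mu_{y,\pi}=\gamma(\pi)$. The paper states this in one line, whereas you helpfully spell out the two implicit steps---that $\Phi\mu_{y,\pi}$ is supported on the geometric slice $K\cap\pi^{-1}y$, and that $A_\Delta$ having full $\pi\Phi\mu$-measure forces $\dim_H A_\Delta\ge\dim_H\pi\Phi\mu$---both of which are indeed needed to pass from the measure identity to the set inequality~(\ref{dimcon}).
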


\begin{proof}
This follows from Corollary \ref{dim-con2} taking $\Delta =\dim_H \Phi\bar{\mu}_{y,\pi}$ for some measure $\mu$ satisfying the strong variational principle.
\end{proof}

Examples such as the Sierpi\'{n}ski triangle \cite{Ken97} and the Sierpi\'{n}ski carpet \cite{ManSim13}  show that the value of $\Delta$ in (\ref{dimcon}) can vary with $\pi$.
\section{The percolation model}\label{percol}
\setcounter{theorem}{0}

Whilst fractal percolation or Mandelbrot percolation is most often based on a decomposition of a $d$-dimensional cube into $m^d$ equal subcubes of sides $m^{-1}$,  random subsets of any self-similar set may be constructed using a similar percolation process. Let $ \mathcal{I}=\{f_i=r_iO_i\cdot+t_i\}_{i=1}^{m}$ be an IFS of similarities with attractor $K$ and let $\mathbb{P}$ be a probability distribution on ${\mathcal  P}(\Lambda)$, the collection of all subsets of $\Lambda = \{1,\ldots,m\}$. We define a sequence of random subsets of $ \Lambda^n$ inductively as follows. The random set $S_1\subseteq \Lambda$ has distribution $\mathbb{P}$. Then, given $S_n$,  let  $S_{n+1}= \cup_{i \in S_n} S^i$ where $S^i =\{ ij : j \in S^i_1\}\subseteq \Lambda^{n+1}$ and  where $S^i_1\subseteq  \Lambda$ has the distribution $\mathbb{P}$  independently for each $i \in S_n$.
A  sequence of random subsets $\{K_n \}_{n=1}^{\infty}$ of $K$ is given by 
$K_n= \cup_{i \in S_n} f_{i}(K)$.  We write
$K_\mathbb{P} = \cap_{n=0}^\infty K_n$ for the resulting random compact subset of $K$ which is known as the \textit{percolation set}. (Note that standard Mandelbrot percolation on a cubic grid is a particular case of percolation on a self-similar set satisfying OSC.)

In this random setting we say that $(\mathcal{I},\mathbb{P})$ satisfies the {\it strong variational principle} if there exists a random cascade measure $\mu$  on $\Lambda^\mathbb{N}$ such that there is a positive probability  of $K_\mathbb{P} \not= \emptyset$, and such that, conditional on $K_\mathbb{P} \not= \emptyset$,
\begin{equation}\label{svp2}
\dim_H K_\mathbb{P}=\dim_H \Phi\mu=\alpha
\end{equation}
a.s., where $\alpha$ is given by Theorem \ref{thmed}(i). The next lemma gives a condition for $(\mathcal{I},\mathbb{P})$ to satisfy the strong variational principle, in which case $\alpha$ is given by an expectation equation.
\begin{lemma}
 Let $(\mathcal{I},\mathbb{P})$ be as above with $\mathcal{I}$ satisfying OSC and  with $\mathbb{E} \{\mathrm{card} S_1\}   > 1$. Then $(\mathcal{I},\mathbb{P})$ satisfies the strong variational principle with $\alpha$ given by
 \begin{equation}\label{percdim}
\mathbb{E}(\sum_{i\in S_1}  r_i^\alpha) = 1.
\end{equation}
\end{lemma}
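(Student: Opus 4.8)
The plan is to exhibit an explicit random cascade measure $\mu$, coupled to the percolation, so that $\Phi\mu$ realises the strong variational principle, and then to read off $\dim_H\Phi\mu$ from Theorem \ref{thmed}(i) using the open set condition. After discarding any $i\in\Lambda$ with $\mathbb P(i\in S_1)=0$, the map $t\mapsto\mathbb E\big(\sum_{i\in S_1}r_i^{t}\big)$ is continuous, bounded by $m$, strictly decreasing (since $\mathbb P(S_1\ne\emptyset)>0$, which follows from $\mathbb E\{\mathrm{card}\,S_1\}>1$), equal to $\mathbb E\{\mathrm{card}\,S_1\}>1$ at $t=0$ and tends to $0$ as $t\to\infty$, so \eqref{percdim} has a unique root $\alpha\in(0,\infty)$. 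I would then set $W_i=r_i^{\alpha}\chi_{\{i\in S_1\}}$, so $\sum_i\mathbb E(W_i)=1$; assumption ${\bf(a0)}$ holds because $\#\{i:W_i>0\}=\mathrm{card}\,S_1$ and $\mathbb E\{\mathrm{card}\,S_1\}>1$ forces $\mathbb P(\mathrm{card}\,S_1\ge2)>0$, and ${\bf(a1)}$ holds for every $p>1$ since $\sum_i\mathbb E(W_i^{p})=\mathbb E\big(\sum_{i\in S_1}r_i^{p\alpha}\big)<\mathbb E\big(\sum_{i\in S_1}r_i^{\alpha}\big)=1$ as $r_i<1$ and $\alpha>0$. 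Coupling the cascade so that the same sets $S_1^{[i]}$ drive both constructions, a word $j$ lies in the percolation tree exactly when $Q_j>0$, in which case $Q_j=r_j^{\alpha}$; hence $\Phi\mu$ is supported by $K_{\mathbb P}$, and $\|\mu\|>0$ implies $K_{\mathbb P}\ne\emptyset$. Since ${\bf(a1)}$ gives $\mathbb E(Y)=1$, the standard non-degeneracy theory for Mandelbrot cascades (see \cite{Ba99}) shows conversely $\|\mu\|>0$ a.s.\ on $\{K_{\mathbb P}\ne\emptyset\}$; thus $\mathbb P(K_{\mathbb P}\ne\emptyset)>0$, $\mathbb P_*$ is precisely the conditioning on non-extinction, and $\dim_H\Phi\mu\le\dim_H K_{\mathbb P}$, $\mathbb P_*$-a.s.

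Next I would prove $\dim_H K_{\mathbb P}\le\alpha$ by covering $K_{\mathbb P}\subseteq K_n=\bigcup_{i\in S_n}f_i(K)$ with sets of diameter $r_i\,\mathrm{diam}\,K\le\rho^{n}\,\mathrm{diam}\,K\to0$. The recursive definition of $(S_n)$ and the independence of the fresh choices $S_1^{[i]}$ from $S_n$ make $\big(\sum_{i\in S_n}r_i^{\alpha}\big)_{n\ge0}$ a non-negative martingale of mean $1$ (in fact it is the martingale $Y_n$ of the coupled cascade), so $\mathbb E\big(\mathcal{H}^{\alpha}_{\rho^{n}\mathrm{diam}\,K}(K_{\mathbb P})\big)\le(\mathrm{diam}\,K)^{\alpha}$ for every $n$, and letting $n\to\infty$ (monotone convergence) gives $\mathbb E\big(\mathcal{H}^{\alpha}(K_{\mathbb P})\big)\le(\mathrm{diam}\,K)^{\alpha}<\infty$, whence $\dim_H K_{\mathbb P}\le\alpha$ almost surely.

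For the matching lower bound I would invoke Theorem \ref{thmed}(i): $\Phi\mu$ is $\mathbb P_*$-a.s.\ exact-dimensional with dimension $\big(\mathbb E(\mathbf{H}_{\bar\mu}(\mathcal{P}\,|\,\mathcal{B}_\Phi))+\sum_i\mathbb E(W_i\log W_i)\big)\big/\sum_i\mathbb E(W_i)\log r_i$. A short computation from $W_i=r_i^{\alpha}\chi_{\{i\in S_1\}}$ gives $\sum_i\mathbb E(W_i\log W_i)=\alpha\sum_i\mathbb E(W_i)\log r_i$, so this dimension equals $\alpha$ provided $\mathbf{H}_{\bar\mu}(\mathcal{P}\,|\,\mathcal{B}_\Phi)=0$ $\mathbb P_*$-a.s., i.e.\ provided $\Phi$ is $\bar\mu$-a.e.\ injective. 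Here the open set condition enters: fixing a bounded open $V$ with the $f_i(V)$ pairwise disjoint subsets of $V$, a coincidence $\Phi(\underline i)=\Phi(\underline j)$ with $\underline i\ne\underline j$ forces the common value into $f_{\underline i|_{m}}(\partial V)$ at the first place $m$ where $\underline i$ and $\underline j$ differ, so the overlap set sits inside $\bigcup_{m\ge1}\sigma^{-m}\Phi^{-1}(\partial V)$. The classical open set condition analysis (cf.\ \cite{Hut81}) gives $\mu_p(\Phi^{-1}(\partial V))=0$ for the Bernoulli measure $\mu_p$ with probability vector $p=(\mathbb E(W_i))_i$ (all coordinates positive), and $\mu_p$ is $\sigma$-invariant, so every set $\sigma^{-m}\Phi^{-1}(\partial V)$ is $\mu_p$-null; since $\mathbb E(\mu([j]))=\mathbb E(Q_j)\,\mathbb E(Y^{[j]})=p_j$ we have $\mathbb E(\mu)=\mu_p$ as measures, whence these sets are $\mu$-null almost surely, so $\bar\mu$ gives no mass to the overlap set and $\mathbf{H}_{\bar\mu}(\mathcal{P}\,|\,\mathcal{B}_\Phi)=0$, $\mathbb P_*$-a.s. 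Combining the three steps, $\alpha=\dim_H\Phi\mu\le\dim_H K_{\mathbb P}\le\alpha$ $\mathbb P_*$-a.s., which is exactly \eqref{svp2} with $\alpha$ the root of \eqref{percdim}.

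The step I expect to be the main obstacle is this last one, the vanishing of $\mathbf{H}_{\bar\mu}(\mathcal{P}\,|\,\mathcal{B}_\Phi)$ under the open set condition: a random cascade measure need not be absolutely continuous with respect to $\mu_p$, so the $\mu_p$-null overlap set cannot be used directly and must be transferred to $\mu$ through the identity $\mathbb E(\mu)=\mu_p$ and the $\sigma$-invariance of $\mu_p$, after which a Lusin--Souslin argument identifies $\mathcal{B}_\Phi$ with the full Borel $\sigma$-algebra modulo $\bar\mu$-null sets and forces the conditional entropy to $0$.
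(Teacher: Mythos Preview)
Your proposal is correct and follows the same strategy as the paper: define the cascade via $W_i=r_i^{\alpha}\chi_{\{i\in S_1\}}$, check assumptions {\bf(a0)}--{\bf(a1)}, couple to the percolation so that $\Phi\mu$ is supported on $K_{\mathbb P}$, and read off the dimension from Theorem~\ref{thmed}(i). The paper's own proof is only a sketch that cites \cite{Fa86,MW86} for $\dim_H K_{\mathbb P}=\alpha$ under OSC and says one may use ``a potential-theoretic estimate or a direct verification of the formula in Theorem~\ref{thmed}(i)''; your martingale covering argument for the upper bound and your OSC argument for $\mathbf{H}_{\bar\mu}(\mathcal P\,|\,\mathcal B_\Phi)=0$ are exactly that direct verification spelled out (the latter can also be cited from \cite{FeHu09}, where the reduction of the dimension formula under OSC is carried out).
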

\begin{proof}
By standard branching process theory \cite{AN}, if $\mathbb{E} \{\mbox{card} S_1\}   > 1$ there is a positive probability that  $K_\mathbb{P} \not= \emptyset$. Under OSC, conditional on  
$K_\mathbb{P} \not= \emptyset$ the a.s. dimension of  
$K_\mathbb{P} $ is  the solution $\alpha$ of
(\ref{percdim})
The random cascade defined by the random vector
\begin{equation}\label{wdef}
W= (W_1,\ldots,W_n) = (r_1^\alpha\chi_{\{1\in S\}}(\omega),\ldots, r_m^\alpha\chi_{\{m\in S\}}(\omega)). 
\end{equation} gives rise to a random measure $\Phi\mu$ supported by $K_\mathbb{P}$ such that $\mathbb{P}^*(K_\mathbb{P} \not= \emptyset)>0$.
Using a potential-theoretic estimate or a direct verification of the formula in Theorem \ref{thmed}(i), 
$\dim_H \Phi\mu = \dim_H K_\mathbb{P}= \alpha$ a.s., see \cite{Fa86,MW86}, so the $\alpha$ given by (\ref{percdim}) equals that of Theorem \ref{thmed}(i).
\end{proof}

Investigation of the dimensions of projections of the basic $m$-adic square-based percolation process goes back some years, see \cite{Dek} for a survey, and recently Rams and Simon \cite{RS}  showed using direct geometric arguments that a.s.  all orthogonal projections of square-based percolation have Hausdorff dimension $\min\{1,\alpha\}$, where $\alpha$ is the dimension of the  percolation set. The following application of Corollory \ref{c2} gives a similar conclusion for percolation on self-similar sets for which the IFS has dense rotations. 

\begin{corollary}\label{properc}
If $(\mathcal{I},\mathbb{P})$ satisfies the strong variational principle and  has rotation group $SO(d,\mathbb{R})$, then a.s. conditional on $K_\mathbb{P} \not= \emptyset$,
\[
\dim_H \pi K_\mathbb{P}=\min(k,\dim_H K_\mathbb{P})=\min(k,\alpha) \text{ for all } \pi\in \Pi_{d,k},
\]
where $\alpha$ is given by \eqref{svp2}.
\end{corollary}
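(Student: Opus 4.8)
The plan is to read the corollary off directly from Corollary~\ref{c2} applied to the random cascade measure $\mu$ supplied by the strong variational principle. The first thing I would do is identify the conditioning events. By hypothesis there is a random cascade measure $\mu$ on $\Lambda^{\mathbb{N}}$ with the stated dimension properties, and in the situation of the preceding lemma it is the one built from the weights $W=(r_i^\alpha\chi_{\{i\in S\}})_{i\in\Lambda}$ of \eqref{wdef}. Because these weights vanish off the surviving children, $\Phi\mu$ is carried by $K_\mathbb{P}$, so $\{\|\mu\|>0\}\subseteq\{K_\mathbb{P}\neq\emptyset\}$; conversely, the strong variational principle asserts $\dim_H\Phi\mu=\alpha$ (hence that $\Phi\mu$ is nontrivial) a.s.\ on $\{K_\mathbb{P}\neq\emptyset\}$, so the two events agree up to a $\mathbb{P}^*$-null set and $\mathbb{P}_*$ is precisely $\mathbb{P}(\,\cdot\mid K_\mathbb{P}\neq\emptyset)$. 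This is the bridge that lets one pass between the ``$\mathbb{P}_*$-a.s.'' conclusions of Section~\ref{dip} and the ``a.s.\ conditional on non-extinction'' statement here.

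For the lower bound I would argue as follows. Since $\Phi\mu$ is carried by $K_\mathbb{P}$, for every $\pi\in\Pi_{d,k}$ the projected measure $\pi\Phi\mu$ is carried by $\pi K_\mathbb{P}$, so $\dim_H\pi K_\mathbb{P}\ge\dim_H\pi\Phi\mu$ by the definition \eqref{dimmes}. As the rotation group of $\mathcal{I}$ equals $SO(d,\mathbb{R})$ by assumption, Corollary~\ref{c2} applies and yields, $\mathbb{P}_*$-a.s.,
\[
\dim_H\pi\Phi\mu=\min(k,\dim_H\Phi\mu)=\min(k,\alpha)\qquad\text{for all }\pi\in\Pi_{d,k}
\]
simultaneously, using \eqref{svp2} for the last equality. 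Hence, conditional on $K_\mathbb{P}\neq\emptyset$, a.s.\ $\dim_H\pi K_\mathbb{P}\ge\min(k,\alpha)$ for all $\pi\in\Pi_{d,k}$.

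The upper bound is routine and deterministic on $\{K_\mathbb{P}\neq\emptyset\}$: each $\pi\in\Pi_{d,k}$ is $1$-Lipschitz, so $\dim_H\pi K_\mathbb{P}\le\dim_H K_\mathbb{P}=\alpha$, while $\pi K_\mathbb{P}$ lies in a $k$-dimensional subspace, so $\dim_H\pi K_\mathbb{P}\le k$; thus $\dim_H\pi K_\mathbb{P}\le\min(k,\alpha)$ for every $\pi$. Combining the two bounds gives $\dim_H\pi K_\mathbb{P}=\min(k,\dim_H K_\mathbb{P})=\min(k,\alpha)$ for all $\pi\in\Pi_{d,k}$, a.s.\ conditional on $K_\mathbb{P}\neq\emptyset$. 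I do not expect any serious obstacle: the only point that genuinely needs care is the identification of $\{\|\mu\|>0\}$ with $\{K_\mathbb{P}\neq\emptyset\}$ described above, and the observation that the quantifier ``for all $\pi$'' in Corollary~\ref{c2} already sits inside the almost-sure event, so it transfers intact.
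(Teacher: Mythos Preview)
Your proposal is correct and is exactly the argument the paper intends: the corollary is stated as a direct application of Corollary~\ref{c2}, and the paper supplies no further proof. You have simply written out the details---that $\Phi\mu$ is supported by $K_\mathbb{P}$ so projections of the set dominate projections of the measure, that Corollary~\ref{c2} gives the lower bound simultaneously for all $\pi$, and that the upper bound is elementary---together with the identification of the conditioning events, which the paper leaves implicit.
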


Again, Corollary \ref{non} gives a variant for $C^1$-maps.

\begin{corollary}\label{nonperc}
If $(\mathcal{I},\mathbb{P})$ satisfies the strong variational principle and has  rotation group $SO(d,\mathbb{R})$, then a.s. conditional on $K_\mathbb{P} \not= \emptyset$,
\[
\dim_H h( K_\mathbb{P})=\min(k,\dim_H K_\mathbb{P}) =\min(k,\alpha)
\]
for all $C^1$-maps $h:K\to \mathbb{R}^k$ without singular points, where $\alpha$ is given by \eqref{svp2}.
\end{corollary}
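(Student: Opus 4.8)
The plan is to transfer Corollary \ref{non} to sets via the random cascade measure supplied by the strong variational principle. By the definition of the strong variational principle for $(\mathcal I,\mathbb P)$, there is a random cascade measure $\mu$ on $\Lambda^{\mathbb N}$ with $\mathbb P^*(K_\mathbb{P}\neq\emptyset)>0$ such that $\Phi\mu$ is supported by $K_\mathbb{P}$ and, conditional on $K_\mathbb{P}\neq\emptyset$, $\dim_H\Phi\mu=\dim_H K_\mathbb{P}=\alpha$ almost surely, with $\alpha$ as in \eqref{svp2} (equivalently as in Theorem \ref{thmed}(i)). Since $\alpha>0$ on $\{K_\mathbb{P}\neq\emptyset\}$, the measure $\Phi\mu$, hence $\mu$, is nontrivial there, while nontriviality of $\mu$ forces $K_\mathbb{P}\neq\emptyset$; thus the conditional probability $\mathbb P_*$ of Section 2 coincides with conditioning on $K_\mathbb{P}\neq\emptyset$ in this setting.

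As the rotation group of $\mathcal I$ is $SO(d,\mathbb R)$, Corollary \ref{non} applies to this $\mu$: $\mathbb P_*$-a.s., simultaneously for every $C^1$-map $h:K\to\mathbb R^k$ without singular points, $\dim_H h\Phi\mu=\min(k,\dim_H\Phi\mu)$. Invoking the strong variational principle this reads $\dim_H h\Phi\mu=\min(k,\alpha)$, a.s. conditional on $K_\mathbb{P}\neq\emptyset$, for all admissible $h$ at once. For the lower bound I would then note that $h\Phi\mu=(\Phi\mu)\circ h^{-1}$ is a Borel probability measure carried by $h(K_\mathbb{P})$, so that $\dim_H h(K_\mathbb{P})\ge\dim_H h\Phi\mu=\min(k,\alpha)$.

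For the matching upper bound, $h$ is $C^1$ on the compact set $K$ and hence Lipschitz on $K$, so $\dim_H h(K_\mathbb{P})\le\dim_H K_\mathbb{P}=\alpha$; and $\dim_H h(K_\mathbb{P})\le k$ trivially since $h(K_\mathbb{P})\subseteq\mathbb R^k$. Combining the two bounds yields $\dim_H h(K_\mathbb{P})=\min(k,\alpha)=\min(k,\dim_H K_\mathbb{P})$ a.s. conditional on $K_\mathbb{P}\neq\emptyset$, which is the claim.

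The argument being essentially a routine transfer through $\Phi\mu$, there is no genuine obstacle. The only points needing a little care are bookkeeping ones: that the universal quantifier ``for all $h$'' already lies inside the almost-sure statement of Corollary \ref{non}, so that no uncountable union of exceptional sets arises; and that the branching-process survival event coincides, under the strong variational principle, with nontriviality of the cascade measure, so that ``$\mathbb P_*$-a.s.'' and ``a.s. conditional on $K_\mathbb{P}\neq\emptyset$'' mean the same thing here.
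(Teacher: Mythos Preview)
Your proposal is correct and follows exactly the approach the paper intends: the paper gives no explicit proof of this corollary beyond the sentence ``Again, Corollary~\ref{non} gives a variant for $C^1$-maps,'' and your argument is precisely the routine transfer from measures to sets via the random cascade measure furnished by the strong variational principle. Your attention to the bookkeeping points (the universal quantifier over $h$ lying inside the almost-sure statement of Corollary~\ref{non}, and the identification of $\mathbb P_*$ with conditioning on $K_\mathbb{P}\neq\emptyset$) is appropriate and more careful than the paper itself.
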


Distance sets of percolation sets have also attracted interest recently, see \cite{RS} for the case of square-based percolation. The following result follows from a similar argument to that of Corollary \ref{distA} but in a random setting using Corollary \ref{nonperc}.

\begin{corollary}\label{dist}
Suppose that $(\mathcal{I},\mathbb{P})$ satisfies the strong variational principle and has  rotation group $ SO(d,\mathbb{R})$. Then a.s.  conditional on $K_\mathbb{P} \not= \emptyset$, there exists $a \in K_\mathbb{P}$ such that
\[
 \min(1,\alpha)=\dim_H D_a(K_\mathbb{P})\leq \dim_H D(K_\mathbb{P}) \leq 1,
\]
where $\alpha$ is given by \eqref{svp2}.
\end{corollary}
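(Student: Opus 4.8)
The plan is to mimic the proof of Corollary~\ref{distA}, replacing the deterministic similar copy $f_i(K)$ of the self-similar set by the percolation process restricted to a suitably chosen \emph{random} cylinder, and invoking Corollary~\ref{nonperc} in place of Corollary~\ref{mapsss}. First I would dispose of the trivial case: if $\alpha=0$ then, since $y\mapsto|y-a|$ is $1$-Lipschitz, a.s.\ conditional on $K_\mathbb{P}\ne\emptyset$ we have $\dim_H D_a(K_\mathbb{P})\le\dim_H K_\mathbb{P}=0=\min(1,\alpha)$ for any $a\in K_\mathbb{P}$, and $\dim_H D(K_\mathbb{P})\le 1$ automatically, so the statement holds; hence assume from now on that $\alpha>0$.

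Next I would locate the random cylinder. Work on the a.s.\ event, conditional on $K_\mathbb{P}\ne\emptyset$, on which $\dim_H K_\mathbb{P}=\alpha$ (this is \eqref{svp2}); since $\alpha>0$ the set $K_\mathbb{P}$ is uncountable, in particular has at least two points. Because $K_\mathbb{P}\ne\emptyset$, the tree of surviving cylinders is infinite and finitely branching, so by K\"onig's lemma it contains an infinite branch $\underline\beta\in\Lambda^\mathbb{N}$; put $b=\Phi(\underline\beta)\in K_\mathbb{P}$ and choose $a\in K_\mathbb{P}\setminus\{b\}$, so that $\delta:=|a-b|>0$. Taking $n$ so large that $r_{\underline\beta|_n}\,\mathrm{diam}(K)<\delta$ and writing $w=\underline\beta|_n$, we get $b\in f_w(K)$ and $\mathrm{diam}\,f_w(K)<\delta$, whence $a\notin f_w(K)$; moreover $w$ lies in the surviving tree and the subtree rooted at $w$ contains $\sigma^n\underline\beta$, so is infinite, whence the percolation set $K_\mathbb{P}^{[w]}$ attached to that subtree is non-empty. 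By the statistical self-similarity of the construction, this subtree is an independent copy of the percolation driven by the same pair $(\mathcal{I},\mathbb{P})$, and since $w$ survives one checks directly (piece by piece along the levels) that $f_w\big(K_\mathbb{P}^{[w]}\big)\subseteq K_\mathbb{P}\cap f_w(K)$.

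Then I would invoke Corollary~\ref{nonperc}. Since $a\notin f_w(K)$ we have $f_w^{-1}(a)\notin K$, so $g:x\mapsto|f_w(x)-a|$ is $C^1$ on a neighbourhood of $K$; its derivative at $x\in K$ is the product of the unit vector $(f_w(x)-a)/|f_w(x)-a|$ with the invertible matrix $r_wO_w$, hence non-zero, so $g:K\to\mathbb{R}$ has no singular points. Intersecting over the countably many $w\in\Lambda^*$ the a.s.\ events furnished by Corollary~\ref{nonperc} for the copy of the percolation sitting in the subtree rooted at $w$, we obtain a full-probability event on which, whenever $K_\mathbb{P}^{[w]}\ne\emptyset$, one has $\dim_H h\big(K_\mathbb{P}^{[w]}\big)=\min\big(1,\dim_H K_\mathbb{P}^{[w]}\big)=\min(1,\alpha)$ for every $C^1$-map $h:K\to\mathbb{R}$ without singular points, in particular for $h=g$. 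Consequently
\[
\dim_H D_a(K_\mathbb{P})\ \ge\ \dim_H\big\{|y-a|:y\in f_w(K_\mathbb{P}^{[w]})\big\}\ =\ \dim_H g\big(K_\mathbb{P}^{[w]}\big)\ =\ \min(1,\alpha),
\]
while $\dim_H D_a(K_\mathbb{P})\le\min(1,\alpha)$ because $D_a(K_\mathbb{P})\subseteq\mathbb{R}$ and $y\mapsto|y-a|$ is $1$-Lipschitz, so that $\dim_H D_a(K_\mathbb{P})\le\dim_H K_\mathbb{P}=\alpha$. Hence $\dim_H D_a(K_\mathbb{P})=\min(1,\alpha)$, and since $a\in K_\mathbb{P}$ we get $D_a(K_\mathbb{P})\subseteq D(K_\mathbb{P})\subseteq\mathbb{R}$, giving $\min(1,\alpha)\le\dim_H D(K_\mathbb{P})\le 1$, as required.

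The step I expect to need the most care is the middle one: pinning down the random cylinder $w$ so that the portion of $K_\mathbb{P}$ inside $f_w(K)$ genuinely contains the $f_w$-image of an independent percolation set to which Corollary~\ref{nonperc} applies, and making the application of Corollary~\ref{nonperc} to this \emph{random} $w$ rigorous by intersecting over the countable family $\{w\in\Lambda^*\}$. The $C^1$- and non-singularity verifications for $g$ and the $1$-Lipschitz upper bound are routine.
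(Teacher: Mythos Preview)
Your proposal is correct and follows exactly the approach the paper indicates: it says only that the result ``follows from a similar argument to that of Corollary~\ref{distA} but in a random setting using Corollary~\ref{nonperc}'', and you have carried out precisely that adaptation, supplying the details the paper omits (selecting a surviving cylinder $w$ with $a\notin f_w(K)$ and $K_\mathbb{P}^{[w]}\ne\emptyset$, and making the application of Corollary~\ref{nonperc} to a random $w$ rigorous by intersecting the countably many almost-sure events over $w\in\Lambda^*$, which is legitimate since that corollary gives a conclusion valid for \emph{all} $C^1$-maps simultaneously).
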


\section*{Acknowledgements}
The authors thank De-Jun Feng, Mike Hochman and Mike Todd for helpful discussions. We also thank the referees for many suggestions that have helped improve the paper.

\bibliographystyle{abbrv}

\end{document}